\newtheorem{theorem}{Theorem}[section]
\newtheorem{lemma}[theorem]{Lemma}
\newtheorem{proposition}[theorem]{Proposition}
\newtheorem{problem}[theorem]{Problem}
\theoremstyle{definition}
\newtheorem{assumption}[theorem]{Assumption}
\theoremstyle{remark}
\newtheorem{remark}[theorem]{Remark}
\newcommand{\R}{\mathbb{R}}
\newcommand{\D}{\displaystyle}
\newcommand{\Exp}{\operatornamewithlimits{Exp}}
\newcommand{\E}{\mathbb{E}}
\newcommand{\p}{\mathbb{P}}
\newcommand{\argmin}{\operatornamewithlimits{argmin}}
\newcommand{\argmax}{\operatornamewithlimits{argmax}}
\begin{document}
\title[Expected Supremum Representation and Optimal Stopping]{Expected Supremum Representation and Optimal Stopping}
\author{Luis H. R. Alvarez E.}\thanks{Turku School of Economics, Department of Accounting and Finance, 20014 University of Turku, Finland,
e-mail:luis.alvarez@tse.fi}
\author{Pekka Matom\"aki}\thanks{Turku School of Economics, Department of Accounting and Finance, 20014 University of Turku, Finland,
e-mail:pjsila@utu.fi}
\subjclass[2010]{60G40, 60J60, 91G80}
\date{May 7, 2015}

\begin{abstract}
We consider the representation of the value of an optimal stopping problem of a linear diffusion as an expected supremum of a known function.
We establish an explicit integral representation of this function by utilizing the explicitly known joint probability distribution of the extremal processes.
We also delineate circumstances under which the value of a stopping problem induces directly this representation and show how it is connected with the monotonicity of the generator.
We compare our findings with existing literature and show, for example, how our representation is linked to the smooth fit principle and how it coincides
with the optimal stopping signal representation.
The intricacies of the developed integral representation are explicitly illustrated in various examples arising in financial applications of optimal stopping.
\end{abstract}
\maketitle
\thispagestyle{empty}
\clearpage \setcounter{page}{1}
\section{Introduction}

It is well-known from the literature on stochastic processes that the probability distributions of first hitting times are closely related to the probability
distributions of the running supremum and running infimum of the underlying diffusion. Consequently, the question of whether a linear diffusion has exited
from an open interval prior to a given date or not can be answered by studying the behavior of the extremal processes up to the date in question. If the extremal
processes have remained in the open interval up to the particular date, then the process has not yet hit the boundaries and vice versa. In
this study we utilize this connection and develop a representation of the value function of an optimal stopping problem as the expected supremum of a function
with known properties in the spirit of the pioneering work by \cite{FoKn1,FoKn2} and the subsequent extension to optimal stopping by \cite{ChSaTa}.

The relatively recent literature on stochastic control theory indicates that the connection between, among others, the value functions and extremal processes in optimal stopping and
singular stochastic control problems goes far beyond the standard connection between first hitting times and the running supremum
and infimum of the underlying process (see, for example, \cite{BaBa,BaElKa,BaFo,BaRi,ElKaMe,ElKaFo}). Essentially, in these studies the determination of the
optimal policy and its value is shown to be equivalent with the existence of an appropriate optional projection involving the running supremum of a progressively
measurable process. The advantage of the representation utilized in these studies is that it is very general and applies
also outside the standard Markovian and infinite horizon setting.
Moreover, it can be utilized for studying and solving other than just optimal stopping and singular stochastic control problems as well. For example, as was shown in \cite{BaElKa,BaFo},
the approach is applicable in the analysis of the Gittins-index familiar from the literature on multi-armed bandits (cf. \cite{ElKarKar94,Git79,GitGla77,GitJon79,Kar1984}).

Instead of establishing directly how the value of an optimal stopping problem can be expressed as an expected supremum, we take an alternative route and compute
first the joint probability distribution of the running supremum and running infimum of the underlying diffusion at an independent exponentially distributed random
time. We then compute explicitly the expected value of the supremum of an unknown function subject to a set of
monotonicity and regularity conditions. Setting this expected value equal with the value of an optimal stopping problem then results into a functional identity
from which the unknown function can be explicitly determined. In the single boundary setting the function admits a relatively simple characterization in terms of
the minimal excessive mappings for the underlying diffusion (cf. \cite{BaBa}). We find that the required monotonicity of the function needed for the representation is closely
related with the monotonicity
of the generator on the state space of the underlying diffusion. However, since only the sign of the generator typically affects the determination of the optimal strategy
and its value, our results
demonstrate that not all single boundary problems can be represented as the expected supremum of a monotonic function. We also investigate the regularity properties of the
function needed for the representation and show that
it needs not be continuous. More precisely, we find that if the optimal boundary is attained at a point where the exercise payoff is not differentiable, then the
function needed for the representation is only upper semicontinuous. This is a result which is in line with the findings by \cite{ChSaTa}.

In the two boundary setting the representation becomes more involved and takes an integral form where the
integration bounds are interdependent due to the dependence of the two extremal processes. However, since
the representation is based on the minimal $r$-excessive functions and the scale of the underlying diffusion, our approach results into a
representation which can be efficiently utilized in numerical computations. We also compare our representation with previous representations.
Given that our approach is based on the study by \cite{ChSaTa} it naturally coincides with their representation the main difference being that we compute the expected
supremum explicitly and in that way state an explicit representation of the unknown function needed for the representation. We also establish that our representation
coincides with the stopping signal representation originally developed in \cite{BaBa}. Hence, our findings provide an explicit connection between these two seemingly different approaches.
Furthermore, we also demonstrate that the continuity requirement of the functional form needed for the representation is equivalent with the standard smooth fit principle. In this way, our
study provides a link between the usual (e.g. free boundary/variational inequalities) approach and the more recent approaches based on the running supremum.
In line with our findings in the single boundary case, our results indicate that the function needed for the representation does not need to be continuous. In this way,
our numerical results appear to show that
the stopping signal representation developed in \cite{BaBa} applies also in a nonsmooth environment.

The contents of this study is as follows. In section two we formulate the considered problem, characterize the underlying stochastic dynamics, and state a set of
auxiliary results needed in the subsequent analysis of the problem. Section three focuses on a single boundary setting where the optimal rule is to exercise as soon
as a given exercise threshold is exceeded.  The general two-boundary
case is then investigated in detail in section four. Finally, section five concludes our study.

\section{Problem Formulation}

\subsection{Underlying stochastic dynamics}
We consider a linear, time homogeneous and regular diffusion process $X=\{X(t);t\in[0,\xi)\}$, where $\xi$ denotes the possible infinite life time of the
diffusion. We assume that the diffusion is defined on a complete filtered probability space $(\Omega,\p,\{\mathcal{F}_t\}_{t\geq0},\mathcal{F})$, and that the state
space of the diffusion is $\mathcal{I}=(a,b)\subset\R$. Moreover, we assume that the diffusion does not die inside $\mathcal{I}$, implying that the boundaries $a$
and $b$ are either natural, entrance, exit or regular (see \cite{BorSal02}, pp. 18-20 for a characterization of the boundary behaviour of diffusions). If the boundary
is regular, we assume that the process is either killed or reflected at that boundary. Furthermore we will denote by $I_t=\inf_{0\leq s\leq t}X_s$ the running infimum
and by $M_t=\sup_{0\leq s\leq t}X_s$ the running supremum process of the considered diffusion $X_t$.

As usually, we denote by $\mathcal{A}$ the differential operator representing the infinitesimal generator of $X$. For a given smooth mapping $f:\mathcal{I}\mapsto\R$ this operator is given by
\begin{align*}
(\mathcal{A}f)(x)=\frac{1}{2}\sigma^2(x)\frac{d^2}{dx^2}f(x)+\mu(x)\frac{d}{dx}f(x),
\end{align*}
where $\mu:\mathcal{I}\mapsto\R$ and $\sigma:\mathcal{I}\mapsto\R_+$ are given continuous mappings. As is known from the classical
theory on linear diffusions,
there are two linearly independent {\it fundamental solutions}
$\psi(x)$ and $\varphi(x)$ satisfying a set of appropriate boundary
conditions based on the boundary behavior of the process $X$ and
spanning the set of solutions of the ordinary differential equation
$(\mathcal{G}_ru)(x)=0$, where $\mathcal{G}_r=\mathcal{A}-r$ denotes the differential operator associated with the diffusion $X$ killed at the constant rate $r$.
Moreover, $\psi'(x)\varphi(x) - \varphi'(x)\psi(x) = BS'(x),$
where $B>0$ denotes the constant Wronskian of the fundamental
solutions and
$$
S'(x)=\exp\left(-\int^x\frac{2\mu(t)}{\sigma^2(t)}dt\right)
$$
denotes the density of the scale function of $X$ (for a comprehensive characterization of the fundamental solutions, see \cite{BorSal02}, pp. 18-19). The
functions $\psi$ and $\varphi$ are minimal in the sense that any non-trivial $r$-excessive mapping for $X$ can be expressed as a combination of these two
(cf. \cite{BorSal02}, pp. 32--35). Given the fundamental solutions, let $u(x)=c_1\psi(x) + c_2\varphi(x), c_1,c_2\in \mathbb{R}$ be an arbitrary twice continuously
differentiable $r$-harmonic function and define for sufficiently smooth mappings $g:\mathcal{I}\mapsto\R$ the functional
\begin{align*}
(L_u g)(x) = g(x) \frac{u'(x)}{S'(x)}-\frac{g'(x)}{S'(x)}u(x) = c_1(L_\psi g)(x)+c_2(L_\varphi g)(x)
\end{align*}
associated with the representing measure for $r$-excessive functions (cf. \cite{Salminen1985}). Noticing that if $g$ is twice continuously differentiable, then
\begin{align}
(L_u g)'(x)=-(\mathcal{G}_rg)(x)u(x)m'(x)\label{linearityGenerator}
\end{align}
where
$
m'(x) = 2/(\sigma^{2}(x)S'(x))
$
denotes the density of the speed measure $m$ of $X$.
Hence, we find that
\begin{align}
(L_u g)(z)-(L_u g)(y)=\int_z^y(\mathcal{G}_rg)(t)u(t)m'(t)dt\label{canonical}
\end{align}
for any $a<z<y<b$. Especially, if $g$ is twice continuously differentiable, $\mathbbm{1}=\mathbbm{1}_{\mathcal{I}}(x)$, and $a<z<y<b$, then the (symmetric) function
\begin{align}
R(z,y)=\frac{(L_u g)(z)-(L_u g)(y)}{(L_u \mathbbm{1})(z)-(L_u \mathbbm{1})(y)}\label{ratio}
\end{align}
satisfies the limiting condition
\begin{align}
\lim_{z\uparrow y}R(z,y)=-\frac{1}{r}(\mathcal{G}_rg)(y)\label{limitingratio}
\end{align}
which is independent of the harmonic function $u$.
Finally, we denote by $\mathcal{L}_r^1(\mathcal{I})$ the class of measurable functions $f:\mathcal{I}\mapsto\R_+$ satisfying the integrability condition
$$
\mathbb{E}_x\int_0^\infty e^{-rs}|f(X_s)|ds<\infty
$$
for all $x\in \mathcal{I}$. As is known from the literature on linear diffusions, the expected cumulative present value of a continuous function $f\in \mathcal{L}_r^1(\mathcal{I})$, that is,
$$
(R_rf)(x)=\mathbb{E}_x\int_0^\infty e^{-rs}f(X_s)ds
$$
can be expressed as
\begin{align}
(R_rf)(x)= B^{-1}\varphi(x)\int_a^x \psi(y)f(y)m'(y)dy+B^{-1}\psi(x)\int_x^b \varphi(y)f(y)m'(y)dy.\label{Green}
\end{align}

\subsection{The Optimal Stopping Problem and Auxiliary Results}

In this paper our objective is to examine an optimal stopping problem
\begin{align}\label{eq prob}
V(x)=\sup_{\tau}\E_x\left[e^{-r\tau}g(X_\tau)\right]
\end{align}
for exercise payoff functions $g$ satisfying a set of sufficient regularity conditions and establish a representation of the value $V(x)$ as
the expected supremum of an appropriately chosen function along
the lines of the pioneering studies \cite{BaElKa},\cite{BaFo},\cite{ChSaTa}, \cite{ElKaFo}, \cite{ElKaMe}, \cite{FoKn1}, \cite{FoKn2}. Our main results are
based on the following two representation theorems originally established in \cite{ChSaTa}. The first theorem focuses on the case of a one-sided stopping boundary.

\begin{theorem}(\cite{ChSaTa}, Theorem 2.5)\label{theorem abo1}
Let $X_t$ be a Hunt process on $\mathcal{I}$ and $T\sim\Exp (r)\perp X_t$. Assume that the exercise payoff $g$ is non-negative, lower semicontinuous, and satisfies the condition
$\E_x\left[\sup_{t\geq 0}e^{-rt}g(X_t)\right]<\infty$ for all $x\in \mathcal{I}$.
Assume also that there exists an upper semicontinuous $\hat{f}$ and a point $y^{\ast}\in \mathcal{I}$ such that
\begin{enumerate}
\item[(a)] $\hat{f}(x)\leq0$ for $x< y^{\ast}$, $\hat{f}(x)$ is non-decreasing and positive for $x\geq y^{\ast}$,

\item[(b)] $\E_x\left[\sup_{0\leq t\leq T}\hat{f}(X_t)\right]=g(x)$ for $x\geq y^{\ast}$, and
\item[(c)] $\E_x\left[\sup_{0\leq t\leq T}\hat{f}(X_t)\right]\geq g(x)$ for $x\leq y^{\ast}$.
\end{enumerate}
Then
\begin{align}\label{aboarvo1}
V(x)=\E_x\left[\sup_{0\leq t\leq T}\hat{f}(X_t)\mathbbm{1}_{[y^{\ast},b)}(X_t)\right]=\E_x\left[\hat{f}(M_T)\mathbbm{1}_{[y^{\ast},b)}(M_T)\right]
\end{align}
and  $\tau^*=\inf\{t\geq0\mid X_t>y^{\ast}\}$ is an optimal stopping time.
\end{theorem}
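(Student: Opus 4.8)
The plan is to single out the candidate value
\[
\hat V(x):=\E_x\Big[\sup_{0\leq t\leq T}\hat f(X_t)\mathbbm{1}_{[y^*,b)}(X_t)\Big],
\]
prove that it dominates the payoff under every admissible rule, and exhibit $\tau^*$ as a rule attaining it. First I would record the pathwise reduction behind the second equality in \eqref{aboarvo1}. Setting $\Phi(x):=\hat f(x)\mathbbm{1}_{[y^*,b)}(x)$, condition (a) makes $\Phi$ non-negative and non-decreasing on $\mathcal I$: it vanishes below $y^*$ and equals the positive non-decreasing $\hat f$ on $[y^*,b)$. Hence $\Phi(X_t)\geq\hat f(X_t)$ for every $t$ (as $\hat f\leq0$ below $y^*$), and by monotonicity $\sup_{0\leq t\leq T}\Phi(X_t)=\Phi(M_T)=\hat f(M_T)\mathbbm{1}_{[y^*,b)}(M_T)$, which is precisely the claimed identity. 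It therefore suffices to prove $V=\hat V$ with $\hat V(x)=\E_x[\Phi(M_T)]$.

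For the upper bound $V\leq\hat V$ I would argue in two steps. For the domination $\hat V\geq g$: when $x\geq y^*$ one has $M_T\geq x\geq y^*$, so the below-threshold values are dominated and both the indicator and the plain supremum of $\hat f$ collapse to $\hat f(M_T)$, giving $\hat V(x)=\E_x[\sup_{0\leq t\leq T}\hat f(X_t)]=g(x)$ by (b); when $x<y^*$ the pointwise bound $\Phi\geq\hat f$ yields $\hat V(x)\geq\E_x[\sup_{0\leq t\leq T}\hat f(X_t)]\geq g(x)$ by (c). The analytic core is then to show that $\hat V$ is $r$-excessive. Using the independence and lack-of-memory of $T\sim\Exp(r)$ together with the strong Markov property, for every stopping time $\tau$ one obtains
\[
\E_x\big[e^{-r\tau}\hat V(X_\tau)\big]=\E_x\Big[\mathbbm{1}_{\{T>\tau\}}\sup_{\tau\leq s\leq T}\Phi(X_s)\Big]\leq\E_x\Big[\sup_{0\leq s\leq T}\Phi(X_s)\Big]=\hat V(x),
\]
the inequality being immediate from $\Phi\geq0$ and the enlargement of the supremum range. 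Specialising $\tau$ to deterministic times and letting them shrink to $0$ (monotone convergence, using right-continuity of paths and $\Phi\geq0$) promotes this to full $r$-excessivity. Combining the two steps, $\E_x[e^{-r\tau}g(X_\tau)]\leq\E_x[e^{-r\tau}\hat V(X_\tau)]\leq\hat V(x)$ for every $\tau$, and taking the supremum gives $V\leq\hat V$.

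For the reverse inequality and the optimality of $\tau^*=\inf\{t\geq0:X_t>y^*\}$ I would show that the displayed inequality becomes an equality at $\tau^*$. Before $\tau^*$ the process stays strictly below $y^*$---for a regular diffusion $\tau^*$ is the first hitting time of $y^*$---so $\Phi(X_s)=0$ for $s<\tau^*$. Consequently the running supremum of $\Phi$ vanishes on $\{T<\tau^*\}$ and is accumulated entirely after $\tau^*$ on $\{T>\tau^*\}$, giving the almost sure identity
\[
\sup_{0\leq s\leq T}\Phi(X_s)=\mathbbm{1}_{\{T>\tau^*\}}\sup_{\tau^*\leq s\leq T}\Phi(X_s),
\]
whence $\E_x[e^{-r\tau^*}\hat V(X_{\tau^*})]=\hat V(x)$. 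Since $X_{\tau^*}\in[y^*,b)$ falls in the region where $\hat V=g$, this reads $\E_x[e^{-r\tau^*}g(X_{\tau^*})]=\hat V(x)$, so $\tau^*$ is optimal and $V=\hat V$.

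The step I expect to be the main obstacle is this equality claim at $\tau^*$: it hinges on $\Phi$ contributing nothing on $[0,\tau^*)$, which is transparent for a continuous regular diffusion but requires care in the general Hunt setting, where an overshoot at $y^*$ together with the positivity of $\hat f(y^*)$ could otherwise feed the pre-$\tau^*$ supremum. Subsidiary technical points are the finiteness and measurability of the suprema involved---covered by the standing hypothesis $\E_x[\sup_{t\geq0}e^{-rt}g(X_t)]<\infty$ and by (b)--(c)---and the semicontinuity bookkeeping at $t\downarrow0$ needed to upgrade the supermartingale inequality to genuine $r$-excessivity, where the upper semicontinuity of $\hat f$ and lower semicontinuity of $g$ enter.
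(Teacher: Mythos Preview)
The paper does not supply its own proof of this theorem: it is quoted verbatim from \cite{ChSaTa} (their Theorem~2.5) and used as a black box, so there is no in-paper argument to compare against. Your sketch is therefore not competing with anything in the present manuscript.

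On its own merits your outline is essentially the standard route and is sound in the diffusion setting the paper actually works in. The reduction $\sup_{t\leq T}\Phi(X_t)=\Phi(M_T)$ via monotonicity of $\Phi$, the $r$-excessivity of $\hat V$ through the memoryless property of $T$ and the strong Markov property (this is exactly what the paper later invokes as Proposition~2.1 of \cite{FoKn1} and Lemma~2.2 of \cite{ChSaTa}), and the identification of $\tau^\ast$ as optimal by showing no contribution to the supremum occurs before $\tau^\ast$, are all correct. Two small points deserve tightening: first, for $s<\tau^\ast$ you only have $X_s\leq y^\ast$, not $X_s<y^\ast$, in general, and since $\Phi(y^\ast)=\hat f(y^\ast)>0$ by (a) the claim ``$\Phi(X_s)=0$ for $s<\tau^\ast$'' needs the regularity of the one-dimensional diffusion (so that $\tau^\ast$ coincides with the first hitting time of $y^\ast$ and the process is strictly below $y^\ast$ beforehand); you flag this yourself. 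Second, on $\{\tau^\ast=\infty\}$ you should check that both sides of your key identity vanish, which they do since $T<\infty$ a.s.\ and $\Phi(X_s)=0$ for all $s$ on that event. With those caveats your argument is a faithful reconstruction of the proof one finds in \cite{ChSaTa}.
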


This theorem essentially says that if we can find a function satisfying the required conditions (a)-(c), then the optimal stopping policy for \eqref{eq prob}
constitutes an one-sided threshold rule. Moreover, in that case we also notice that the value can be expressed
as an expected supremum attained at an independent exponential random time. As we will prove later in this paper, the reverse argument is  also sometimes true: under certain circumstances
based on the behavior of the infinitesimal generator of the underlying diffusion the value of
the optimal policy generates a continuous and monotone function $\hat{f}$ for which the representation \eqref{aboarvo1} is valid. However, as we will
point out in Example 1, all single boundary stopping problems cannot be represented as proposed in Theorem \ref{theorem abo1}.

The second representation  theorem established in \cite{ChSaTa} focusing on two-sided stopping rules is summarized in the following\footnote{Both Theorem \ref{theorem abo1} and
Theorem \ref{theorem abo2} are slightly modified versions of the original ones. Three minor misprints have been corrected based on a personal communication with P. Salminen}.
\begin{theorem}(\cite{ChSaTa}, Theorem 2.7)\label{theorem abo2}
Let $X_t$ be a Hunt process on $\mathcal{I}$ and $T\sim\Exp (r)\perp X_t$.  Assume that the exercise payoff $g$ is non-negative, lower semicontinuous, and satisfies the condition
$\E_x\left[\sup_{t\geq 0}e^{-rt}g(X_t)\right]<\infty$ for all $x\in \mathcal{I}$.
Assume also that there exists an upper semicontinuous $\hat{f}$ and a pair of points $(z^{\ast},y^{\ast})$ such that
\begin{enumerate}
\item[(a)] $\hat{f}(x)\leq 0$ for $x\in (z^\ast,y^\ast)$, $\hat{f}(x)$ is non-increasing on $(a,z^\ast)$, nondecreasing on $(y^\ast,b)$, and positive on $(a,z^\ast)\cup (y^\ast,b)$,
\item[(b)] $\E_x\left[\sup_{0\leq t\leq T}\hat{f}(X_t)\right]=g(x)$ for $x\notin[z^{\ast},y^{\ast}]$, and
\item[(c)] $\E_x\left[\sup_{0\leq t\leq T}\hat{f}(X_t)\right]\geq g(x)$ for $x\in[z^{\ast},y^{\ast}]$.
\end{enumerate}
Then
\[V(x)=\E_x\left[\sup_{0\leq t\leq T}\hat{f}(X_t)\mathbbm{1}_{(a,z^{\ast})\cup(y^{\ast},b)}(X_t)\right]
=\E_x\left[\left[\hat{f}(I_T)\mathbbm{1}_{(a,z^{\ast}]}(I_T)\right]\lor\left[\hat{f}(M_T)\mathbbm{1}_{[y^{\ast},b)}(M_T)\right] \right]\]
and $\tau^*=\inf\{t\geq0\mid X_t\notin[z^{\ast},y^{\ast}]\}$ is an optimal stopping time.
\end{theorem}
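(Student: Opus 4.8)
The plan is to verify the candidate
$W(x):=\E_x\!\left[\sup_{0\le t\le T}\hat f(X_t)\mathbbm 1_{(a,z^\ast)\cup(y^\ast,b)}(X_t)\right]$
by the standard route of exhibiting it as an $r$-excessive majorant of $g$ that is attained along the rule $\tau^\ast$. Write $\Phi:=\hat f\,\mathbbm 1_{(a,z^\ast)\cup(y^\ast,b)}$, which by (a) is nonnegative: it equals the positive function $\hat f$ on $(a,z^\ast)\cup(y^\ast,b)$ and vanishes on $[z^\ast,y^\ast]$, where $\hat f\le 0$. I would first dispatch the second equality. Because $\hat f$ is non-increasing on $(a,z^\ast)$, the contribution of the left region to the pathwise supremum is largest at the lowest level the process visits there, namely the running infimum $I_T$; symmetrically, monotonicity of $\hat f$ on $(y^\ast,b)$ makes the right contribution largest at $M_T$. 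Since $\Phi\ge 0$, the supremum over $[0,T]$ is the larger of $\hat f(I_T)$ and $\hat f(M_T)$, both contributions being nonpositive—hence subsumed by the $\lor$—when the corresponding extremum does not leave $(z^\ast,y^\ast)$. This gives $\sup_{0\le t\le T}\Phi(X_t)=[\hat f(I_T)\mathbbm 1_{(a,z^\ast]}(I_T)]\lor[\hat f(M_T)\mathbbm 1_{[y^\ast,b)}(M_T)]$ pathwise, and taking $\E_x$ yields the second representation.

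Next I would prove that $W$ is $r$-excessive and majorises $g$. For excessivity, memorylessness of $T$ together with the Markov property gives, for each $t\ge 0$, the identity $\E_x[e^{-rt}W(X_t)]=\E_x[\mathbbm 1_{\{T\ge t\}}\sup_{t\le u\le T}\Phi(X_u)]$; as $\Phi\ge 0$ this is dominated by $\E_x[\sup_{0\le u\le T}\Phi(X_u)]=W(x)$, and letting $t\downarrow 0$ recovers $W(x)$ by right-continuity of paths and the integrability hypothesis, so $\{e^{-rt}W(X_t)\}$ is a nonnegative supermartingale. For the majorisation, $\Phi(X_t)\ge\hat f(X_t)$ for every $t$ (equality off $[z^\ast,y^\ast]$, and $\Phi=0\ge\hat f$ on $[z^\ast,y^\ast]$), whence $W(x)\ge\E_x[\sup_{0\le t\le T}\hat f(X_t)]\ge g(x)$ for all $x$ by (b)--(c). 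Optional sampling for the nonnegative supermartingale then yields $\E_x[e^{-r\tau}g(X_\tau)]\le\E_x[e^{-r\tau}W(X_\tau)]\le W(x)$ for every stopping time $\tau$, so $V\le W$.

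For the reverse inequality I would compute $W$ along $\tau^\ast=\inf\{t:X_t\notin[z^\ast,y^\ast]\}$. If $x\notin[z^\ast,y^\ast]$, then $X_0$ already lies in the positive region, so $\sup_t\Phi(X_t)=\sup_t\hat f(X_t)$ pathwise and $W(x)=g(x)$ by (b), while $\tau^\ast=0$. If $x\in[z^\ast,y^\ast]$, then $\Phi(X_t)=0$ for $t<\tau^\ast$, so $\sup_{0\le t\le T}\Phi(X_t)=\mathbbm 1_{\{\tau^\ast\le T\}}\sup_{\tau^\ast\le t\le T}\Phi(X_t)$; conditioning on $\mathcal F_{\tau^\ast}$ and using the strong Markov property and memorylessness (which supplies the factor $e^{-r\tau^\ast}=\p(T\ge\tau^\ast\mid X)$) gives $W(x)=\E_x[e^{-r\tau^\ast}W(X_{\tau^\ast})]$. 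Identifying $W=g$ at the exit location then produces $W(x)=\E_x[e^{-r\tau^\ast}g(X_{\tau^\ast})]\le V(x)$, so $W=V$ and $\tau^\ast$ is optimal.

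The step I expect to be the genuine obstacle is this final identification $W(X_{\tau^\ast})=g(X_{\tau^\ast})$ at the exit points $z^\ast,y^\ast$: we establish $W=g$ only on the open complement $(a,z^\ast)\cup(y^\ast,b)$, and matching the values at the boundary requires reconciling the upper semicontinuity of $\hat f$ with the lower semicontinuity of $g$ and of the $r$-excessive $W$. This is exactly the continuity/smooth-fit issue flagged in the introduction, and it also governs the delicate cases $I_T=z^\ast$ and $M_T=y^\ast$ in the extremal representation; the remaining convergence arguments (the limit $t\downarrow 0$ in excessivity and passing the supremum to the extrema) are routine given the standing integrability and path-regularity assumptions.
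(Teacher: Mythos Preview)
The paper does not prove this theorem: it is quoted verbatim from \cite{ChSaTa} (their Theorem~2.7) and used as an auxiliary black box, with no proof supplied in the present paper. Consequently there is no ``paper's own proof'' to compare your attempt against.

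That said, your outline follows the standard verification scheme (excessive majorant plus attainment along $\tau^\ast$) that is in the spirit of \cite{ChSaTa} and of the Föllmer--Knispel lemma invoked elsewhere in the paper (Proposition~2.1 in \cite{FoKn1}, Lemma~2.2 in \cite{ChSaTa}). The excessivity argument via memorylessness, the pathwise majorisation $\Phi\ge\hat f$, and the identification of the supremum with the two extremal contributions are all sound. You are also right to flag the boundary identification $W(X_{\tau^\ast})=g(X_{\tau^\ast})$ as the point that needs care: for a continuous Hunt process the exit occurs exactly at $z^\ast$ or $y^\ast$, where condition~(b) gives only $\ge$, not $=$; one has to combine the $r$-excessivity (hence lower semicontinuity) of $W$ with the upper bound $W\le g$ obtained by approaching the boundary from the stopping side, and here the assumed upper semicontinuity of $\hat f$ and lower semicontinuity of $g$ do the work. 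Your diagnosis of where the difficulty lies is accurate; the resolution is essentially the continuity/semicontinuity bookkeeping you anticipate rather than a new idea.
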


Theorem \ref{theorem abo2} essentially states a set of conditions extending the one sided representation considered in Theorem \ref{theorem abo1} to the two-sided setting.
It is, however, worth noticing that these theorems do not tell us how to come up with such functions $\hat{f}$. Our objective is to identify these functions in the ordinary linear diffusion
setting and in this way establish a link between the supremum representation and the standard solution techniques.

Before proceeding in our analysis, we first establish two auxiliary lemmata characterizing the joint probability distribution of the extreme
processes and the underlying diffusion at an independent exponentially distributed random time. Our first findings on the joint probability distribution of
$M_T$ and $I_T$ are summarized in the following.
\begin{lemma}\label{probabilitydist}
The joint probability distribution of the extreme processes $M_t$ and $I_t$ at an independent exponentially distributed random time $T$ reads for all $x\in (i,m)$ as
\begin{align}
\p_x(I_T\leq i,M_T\leq m)=-\frac{\psi(x)}{\psi(m)}+\frac{\hat{\varphi}_m(x)}{\hat{\varphi}_m(i)}+\frac{\hat{\psi}_i(x)}{\hat{\psi}_i(m)},\label{jointprob}
\end{align}
where $\hat{\varphi}_m(x)=\psi(m)\varphi(x)-\varphi(m)\psi(x)$ and $\hat{\psi}_i(x)=\varphi(i)\psi(x)-\psi(i)\varphi(x)$. The marginal distributions
read as
\begin{align}
\p_x(M_T\leq m)=1-\frac{\psi(x)}{\psi(m)}\label{eq Pmax}
\end{align}
for $x\in(a,m)$ and as
\begin{align*}
\p_x(I_T\leq i)=\frac{\varphi(x)}{\varphi(i)}
\end{align*}
for $x\in (i,b)$.
\end{lemma}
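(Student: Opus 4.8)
The plan is to reduce the joint law to Laplace transforms of one- and two-sided first hitting times, exploiting the independence and memorylessness of $T$. Write $\tau_y=\inf\{t\ge 0:\ X_t=y\}$ for the first hitting time of a level $y$. By continuity and regularity of $X$ one has the a.s.\ identifications $\{M_T\le m\}=\{\tau_m>T\}$ and $\{I_T\le i\}=\{\tau_i\le T\}$ for $x\in(i,m)$; the asymmetry between $\le$ at $m$ and $<$ in the complement of $\{I_T\le i\}$ is harmless because $T$ has an absolutely continuous law (so $\p_x(\tau_y=T)=0$) and interior points are regular for $X$ (so hitting and strictly crossing a level occur simultaneously a.s.). Since $T\sim\Exp(r)$ is independent of $X$, conditioning on the trajectory of $X$ converts each such event into a Laplace transform: for every stopping time $\sigma$,
\begin{align*}
\p_x(\sigma\le T)=\E_x\!\left[e^{-r\sigma}\right],\qquad \p_x(\sigma>T)=1-\E_x\!\left[e^{-r\sigma}\right].
\end{align*}

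With this dictionary the marginals follow at once. Indeed $\p_x(M_T\le m)=\p_x(\tau_m>T)=1-\E_x[e^{-r\tau_m}]=1-\psi(x)/\psi(m)$, using the classical one-sided transform $\E_x[e^{-r\tau_m}]=\psi(x)/\psi(m)$ (valid for $x<m$ because $\psi$ is the increasing minimal $r$-excessive solution), and symmetrically $\p_x(I_T\le i)=\p_x(\tau_i\le T)=\E_x[e^{-r\tau_i}]=\varphi(x)/\varphi(i)$ for $x>i$, with $\varphi$ the decreasing solution. For the joint law I would decompose on $\{I_T\le i\}$ versus its complement,
\begin{align*}
\p_x(I_T\le i,\,M_T\le m)=\p_x(M_T\le m)-\p_x(I_T>i,\,M_T\le m),
\end{align*}
and note that $\{I_T>i,\,M_T\le m\}=\{\tau_i\wedge\tau_m>T\}$, the event that $X$ has not exited the open interval $(i,m)$ by time $T$. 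Hence $\p_x(I_T>i,\,M_T\le m)=1-\E_x[e^{-r\tau}]$ with $\tau=\tau_i\wedge\tau_m$ the two-sided exit time.

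The crux is therefore the two-sided transform, which splits according to which endpoint is reached first:
\begin{align*}
\E_x\!\left[e^{-r\tau}\right]=\E_x\!\left[e^{-r\tau_i}\mathbbm{1}_{\{\tau_i<\tau_m\}}\right]+\E_x\!\left[e^{-r\tau_m}\mathbbm{1}_{\{\tau_m<\tau_i\}}\right]=:p_i(x)+p_m(x).
\end{align*}
Each of $p_i,p_m$ is $r$-harmonic on $(i,m)$, hence a linear combination of $\psi$ and $\varphi$, and is pinned down by the boundary values $p_i(i)=1,\ p_i(m)=0$ and $p_m(i)=0,\ p_m(m)=1$. Choosing the combination of $\psi,\varphi$ that vanishes at $m$ yields $p_i(x)=\hat{\varphi}_m(x)/\hat{\varphi}_m(i)$, and the one vanishing at $i$ yields $p_m(x)=\hat{\psi}_i(x)/\hat{\psi}_i(m)$, precisely the quantities in \eqref{jointprob}; here the two normalising constants coincide, $\hat{\varphi}_m(i)=\hat{\psi}_i(m)=\psi(m)\varphi(i)-\varphi(m)\psi(i)>0$. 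Substituting back,
\begin{align*}
\p_x(I_T\le i,\,M_T\le m)=\left(1-\frac{\psi(x)}{\psi(m)}\right)-\bigl(1-p_i(x)-p_m(x)\bigr)=-\frac{\psi(x)}{\psi(m)}+\frac{\hat{\varphi}_m(x)}{\hat{\varphi}_m(i)}+\frac{\hat{\psi}_i(x)}{\hat{\psi}_i(m)},
\end{align*}
which is \eqref{jointprob}. I expect the main obstacle to be not the algebra but the careful bookkeeping: justifying the event identifications at the endpoints (the $\le$/$<$ and hit/cross subtleties, settled by regularity of $X$ and absolute continuity of $T$), and selecting the correct fundamental-solution combinations $\hat{\varphi}_m,\hat{\psi}_i$ together with verifying that their boundary normalisations agree. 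Once these are in place, assembling the result is the one-line substitution above.
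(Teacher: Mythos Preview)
Your proof is correct and follows essentially the same approach as the paper: both reduce the marginals to one-sided hitting-time Laplace transforms, compute $\p_x(I_T\ge i,M_T\le m)=1-\E_x[e^{-r\tau_{i,m}}]$ via the two-sided exit transform, and then obtain the joint law by the subtraction $\p_x(I_T\le i,M_T\le m)=\p_x(M_T\le m)-\p_x(I_T\ge i,M_T\le m)$. The only difference is that you spell out the derivation of the two-sided transform from $r$-harmonicity and boundary conditions, whereas the paper simply cites the standard formula from \cite{BorSal02}.
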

\begin{proof}
It is known that (see \cite{BorSal02}, pp. 25--26)
\begin{align*}
\p_x(M_T\leq m)=1-\p_x(\tau_m< T)=1-\E_x\left[e^{-r\tau_m}\right]=1-\frac{\psi(x)}{\psi(m)}
\end{align*}
for all $x\in(a,m)$. In a completely analogous fashion, we find that  for $x\in(i,b)$ it holds (see \cite{BorSal02}, pp. pp. 25--26)
\begin{align*}
\p_x(I_T\leq i)=\p_x(\tau_i< T)=\frac{\varphi(x)}{\varphi(i)}.
\end{align*}
For determining the joint probability distribution, we first notice that for all $x\in (i,m)$ we have
\begin{align*}
\p_x(I_T\geq i, M_T\leq m)&=1-\p_x(\tau_{i,m}\leq T)=1-\E_x\left[e^{-r\tau_{i,m}}\right]=1-\frac{\hat{\varphi}_m(x)}{\hat{\varphi}_m(i)}-\frac{\hat{\psi}_i(x)}{\hat{\psi}_i(m)}
\intertext{and that}
\p_x(I_T\leq i,M_T\leq m)&=\p_x(M_T\leq m)-\p_x(I_T\geq i,M_T\leq m)
=-\frac{\psi(x)}{\psi(m)}+\frac{\hat{\varphi}_m(x)}{\hat{\varphi}_m(i)}+\frac{\hat{\psi}_i(x)}{\hat{\psi}_i(m)},
\end{align*}
where $\p_x(M_T\leq m)$ was calculated already in \eqref{eq Pmax}.
\end{proof}

Let $\tilde{X}_t=\{X_t;t<\tau_v\}$, $\tau_v=\inf\{t\geq 0: X_t\geq v\}$, denote the diffusion $X$ killed at $v\in \mathcal{I}$ and
$\hat{X}_t=\{X_t;t<\tau_i\}$, $\tau_i=\inf\{t\geq 0: X_t\leq i\}$, denote the diffusion $X$ killed at $i\in \mathcal{I}$. Given these diffusions, we
define $\hat{M}_t=\sup\{\hat{X}_s,s\leq t\}$ and $\tilde{I}_t=\inf\{\tilde{X}_s,s\leq t\}$. We can now establish the following useful result needed later in the
characterization of the value of a stopping problem as an expected supremum in the two-boundary setting.
\begin{lemma}\label{jointprobabilitydist}
Assume that $a<i<v<b$. Then,
\begin{align*}
\mathbb{P}_x\left[\hat{X}_T\in dy|\hat{M}_T=v\right]&=\frac{r\hat{\psi}_i(y)m'(y)dy}{\frac{\hat{\psi}_i'(v)}{S'(v)}-B}\\
\mathbb{P}_x\left[\tilde{X}_T\in dy|\tilde{I}_T= i\right]&=\frac{r\hat{\varphi}_v(y)m'(y)dy}{-B-\frac{\hat{\varphi}_v'(i)}{S'(i)}}.
\end{align*}
for all $x\in (i,v)$. Consequently, if $h:(i,v)\mapsto \mathbb{R}$ is integrable, we have
\begin{align*}
\E_x[h(\hat{X}_T)|\hat{M}_T=v]&=\frac{\int_i^vh(y)\hat{\psi}_i(y)m'(y)dy}{\int_i^v\hat{\psi}_i(y)m'(y)dy}\\
\E_x[h(\tilde{X}_T)|\tilde{I}_T= i]&=\frac{\int_i^vh(y)\hat{\varphi}_v(y)m'(y)dy}{\int_i^v\hat{\varphi}_v(y)m'(y)dy}.
\end{align*}
\end{lemma}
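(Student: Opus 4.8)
The plan is to compute the conditional densities for the two killed diffusions $\hat{X}$ and $\tilde{X}$ by first identifying the relevant joint distributions and then normalizing. I will focus on the first assertion concerning $\hat{X}_T$ and $\hat{M}_T$; the second follows by an entirely symmetric argument with the roles of upper and lower boundaries (and of $\psi$ and $\varphi$) interchanged. The key observation is that the event $\{\hat{M}_T = v\}$ for the diffusion killed at $v$ must be interpreted in the limiting sense: conditioning on $\hat{M}_T = v$ amounts to conditioning on the running supremum of $\hat{X}$ hitting the killing level exactly at the terminal time. Thus I would begin by writing the joint ``density'' $\mathbb{P}_x[\hat{X}_T \in dy, \hat{M}_T \in dv]$ and then divide by the marginal $\mathbb{P}_x[\hat{M}_T \in dv]$.

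**First I would** obtain the relevant joint distribution from Lemma \ref{probabilitydist}. Since $\hat{X}$ is $X$ killed at the lower level $i$, its running supremum $\hat{M}_T$ coincides with $M_T$ on the event that $I_T > i$. Hence I would start from the joint law \eqref{jointprob} of $(I_T, M_T)$ and extract the piece corresponding to the process staying above $i$. Differentiating $\mathbb{P}_x(I_T \geq i, M_T \leq m)$ in $m$ gives the density of $\hat{M}_T$; from the proof of Lemma \ref{probabilitydist} this quantity equals $1 - \hat{\varphi}_m(x)/\hat{\varphi}_m(i) - \hat{\psi}_i(x)/\hat{\psi}_i(m)$, whose derivative in $m$ at $m=v$ produces the marginal density $\mathbb{P}_x[\hat{M}_T \in dv]$. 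The factor $\hat{\psi}_i'(v)/S'(v) - B$ appearing in the denominator should emerge precisely from differentiating $1/\hat{\psi}_i(m)$ and using the Wronskian relation $\psi'\varphi - \varphi'\psi = BS'$ to simplify the resulting expression at the killing level.

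**Next I would** produce the numerator $r\hat{\psi}_i(y)m'(y)\,dy$. Here the natural route is to express things through the resolvent / Green's function representation \eqref{Green} of the killed diffusion $\hat{X}$: the density of $\hat{X}_T$ (weighted by the event that the supremum reaches $v$) should be proportional to $\hat{\psi}_i(y)m'(y)$, since $\hat{\psi}_i$ is exactly the $r$-harmonic function for $X$ vanishing at the lower killing boundary $i$, and it therefore plays the role of $\psi$ for the killed process. The factor $r$ accounts for the exponential density of $T$. Dividing this numerator by the marginal density computed above then yields the stated conditional law, and the Wronskian-derived denominator cancels consistently.

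**The hard part will be** making the conditioning on $\hat{M}_T = v$ rigorous, since this is a null event requiring a careful limiting argument (conditioning on $\{\hat{M}_T \in (v, v+dv)\}$ and letting $dv \to 0$), and getting the boundary-term algebra right so that the Wronskian $B$ appears with the correct sign in $\hat{\psi}_i'(v)/S'(v) - B$. Once the density is established, the formula for $\E_x[h(\hat{X}_T)\mid \hat{M}_T = v]$ follows immediately by integrating $h$ against the conditional density, with the $v$-dependent normalization constant $\hat{\psi}_i'(v)/S'(v) - B$ cancelling between numerator and denominator to leave the clean ratio $\int_i^v h(y)\hat{\psi}_i(y)m'(y)\,dy \big/ \int_i^v \hat{\psi}_i(y)m'(y)\,dy$; I would verify this cancellation and the symmetric computation for $\tilde{X}_T$ to complete the proof.
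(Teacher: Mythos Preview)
Your approach is essentially the same as the paper's: compute the marginal density of $\hat{M}_T$ by differentiating the distribution function from Lemma \ref{probabilitydist}, obtain the joint density via a Green-kernel argument, and take the ratio. The paper makes one step a bit more explicit than you do: rather than speaking of ``the Green's function of the killed diffusion $\hat{X}$,'' it introduces the \emph{doubly}-killed diffusion $\bar{X}_t=\{X_t: t<\tau_i\wedge\tau_v\}$ and observes directly that $\mathbb{P}_x[\hat{X}_T\in dy;\hat{M}_T\leq v]=\mathbb{P}_x[\bar{X}_T\in dy]=r\bar{G}_r(x,y)m'(y)\,dy$, where $\bar{G}_r$ is the Green kernel of $\bar{X}$ built from $\hat{\psi}_i$ and $\hat{\varphi}_v$; differentiating this in $v$ immediately gives $\mathbb{P}_x[\hat{X}_T\in dy;\hat{M}_T\in dv]=r\,\hat{\psi}_i(x)\hat{\psi}_i^{-2}(v)\hat{\psi}_i(y)S'(v)m'(y)\,dy\,dv$, and the $x$- and $v$-dependent prefactors cancel against the marginal exactly as you anticipated. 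Making that doubly-killed object explicit will streamline your ``hard part'' and remove any ambiguity about which Green kernel you are using.
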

\begin{proof}
Assume that $a<i<v<b$ and let $\bar{X}_t=\{X_t, t<\tau_i\wedge\tau_v\}$ denote the diffusion $X$ killed at the boundaries $i$ and $v$. It is then clear by definition of the processes
$\hat{M}_t$ and $\tilde{I}_t$ that
\begin{align*}
\mathbb{P}_x\left[\hat{M}_T\leq v\right] =\mathbb{P}_x\left[\tilde{I}_T\geq i\right]=\mathbb{P}_x[T<\tau_v\wedge\tau_i]=
1-\frac{\hat{\psi}_i(x)}{\hat{\psi}_i(v)}-\frac{\hat{\varphi}_v(x)}{\hat{\varphi}_v(i)}
\end{align*}
implying that
\begin{align*}
\mathbb{P}_x\left[\hat{M}_T\in dv\right] &=\frac{\hat{\psi}_i(x)}{\hat{\psi}_i^2(v)}\left(\hat{\psi}_i'(v)-BS'(v)\right)dv\\
\mathbb{P}_x\left[\tilde{I}_T\in di\right]&=\frac{\hat{\varphi}_v(x)}{\hat{\varphi}_v^2(i)}\left(-BS'(i)-\hat{\varphi}_v'(i)\right)di.
\end{align*}
On the other hand,
\begin{align*}
\mathbb{P}_x\left[\hat{X}_T\in dy;\hat{M}_T\leq v\right] = \mathbb{P}_x\left[\tilde{X}_T\in dy;\tilde{I}_T\geq i\right] =\mathbb{P}_x\left[\bar{X}_T\in dy\right]=r\bar{G}_r(x,y)dy,
\end{align*}
where
$$
\bar{G}_r(x,y)=\begin{cases}
B^{-1}\hat{\varphi}_v(x)\frac{\hat{\psi}_i(y)}{\hat{\psi}_i(v)} &x\geq y\\
B^{-1} \frac{\hat{\varphi}_v(y)}{\hat{\varphi}_v(i)}\hat{\psi}_i(x)&x\leq y.
\end{cases}
$$
is the Green kernel associated with the killed diffusion $\bar{X}$. Standard differentiation yields
\begin{align*}
\mathbb{P}_x\left[\hat{X}_T\in dy;\hat{M}_T\in dv\right]&=r\frac{\hat{\psi}_i(x)}{\hat{\psi}_i^2(v)}\hat{\psi}_i(y)S'(v)m'(y)dydv\\
\mathbb{P}_x\left[\tilde{X}_T\in dy;\tilde{I}_T\in di\right]&= r\frac{\hat{\varphi}_v(x)}{\hat{\varphi}_v^2(i)}\hat{\varphi}_v(y)S'(i)m'(y)dydi.
\end{align*}
The proposed conditional probability distributions follow from the definition of conditional probability. The alleged conditional expectations are finally obtained by ordinary integration.
\end{proof}

\section{One-boundary, increasing case}\label{sec 1sided}

\subsection{Problem Setting}

Our objective in this section is to delineate the circumstances under which the value of a one-sided threshold policy can be expressed as the expected supremum of a monotonic function
and to identify that function explicitly. In what follows, we will focus on the case where the considered stopping policy can be characterized as a rule where the underlying
process is stopped as soon as it exceeds a given constant threshold. The case where the single boundary stopping rule is to exercise as soon as the underlying falls below a
given constant threshold is completely analogous and, therefore, left untreated.

Let $g:\mathcal{I}\mapsto\R$ be a continuous payoff function for which $g^{-1}(\mathbb{R}_+)\neq\emptyset$ and satisfying
\begin{align}\label{integrability}
\mathbb{E}_x\left[\sup_{t\geq 0}e^{-rt}g(X_t)\right]<\infty
\end{align}
for all $x\in \mathcal{I}$. Assume also that $g\in C^1(\mathcal{I}\setminus \mathcal{P})\cap C^2(\mathcal{I}\setminus \mathcal{P})$, where
$\mathcal{P}\in \mathcal{I}$ is a finite set of points in $\mathcal{I}$ and that $|g'(x\pm)|<\infty$ and $|g''(x\pm)|<\infty$ for all $x\in \mathcal{P}$.

Given the assumed regularity conditions, let $\tau_y=\inf\{t\geq0: X_t\geq y\}$ denote the first exit time of the underlying diffusion from the set $(a, y)$,
where $y\in g^{-1}(\mathbb{R}_+)$. Define now the nonnegative function $V_y:\mathcal{I}\mapsto \mathbb{R}_+$ as
\begin{align}\label{eq Vy}
V_y(x)=\E_x\left[e^{-r\tau_y}g(X_{\tau_y});\tau_y<\infty\right] =\begin{cases}
g(x)\quad &x\geq y\\
\psi(x)\frac{g(y)}{\psi(y)}\quad& x<y.
\end{cases}
\end{align}
Given representation \eqref{eq Vy}, we can now state our identification problem as follows.
\begin{problem}\label{1reuna}
For a given $y\in g^{-1}(\mathbb{R}_+)$, does there exist a nonnegative function $\hat{f}:I\mapsto \mathbb{R}_+$ such that for all $x\in\mathcal{I}$ we would have
\begin{align*}
J_y(x):=\E_x\left[\hat{f}(M_T)\mathbbm{1}_{[y,b)}(M_T)\right]=V_{y}(x),
\end{align*}
where $T\sim\Exp (r)\perp X_t$ (cf. Theorem \ref{theorem abo1}). Under which conditions on the function $\hat{f}$ we have
$$\hat{f}(M_T)\mathbbm{1}_{[y,b)}(M_T) \sup_{t\in[0,T]}\hat{f}(X_t)\mathbbm{1}_{[y,b)}(X_t).$$
\end{problem}
It's worth emphasizing  that Problem \ref{1reuna} is twofold. The first representation problem essentially asks if the
expected value of the exercise payoff accrued at the first hitting time to a constant boundary can be expressed as the expected value of an yet unknown function $\hat{f}$ at
the running maximum of the underlying diffusion at an independent exponentially distributed date. The second question essentially asks when the function $\hat{f}$ is such that the representation agrees with the general functional form utilized in Theorem \ref{theorem abo1}. As we will later establish in this paper, the class of functions satisfying the first representation is strictly larger than the latter.

Before proceeding in the derivation of the representation as an expected supremum, we first establish the following result characterizing the optimal policy. We apply this result later for the
identification of circumstances under which the value of the considered one-sided problem can be expressed as the expected supremum of a monotonic function.
\begin{lemma}\label{Martin1}
Assume that the following conditions are satisfied:
\begin{itemize}
  \item[(i)] there exists a $y^\ast = \argmax\{g(x)/\psi(x)\}\in \mathcal{I}$,
  \item[(ii)] $(\mathcal{G}_rg)(x)\leq 0$ for all $x\in [y^\ast,b)\setminus \mathcal{P}$
  \item[(iii)] $g'(x+)\leq g'(x-)$ for all $x\in[y^\ast,b)\cap \mathcal{P}$
\end{itemize}
Then $V(x)=V_{y^\ast}(x)$ and $\tau_{y^\ast}=\inf\{t\geq 0: X_t\geq y^\ast\}$ is an optimal stopping time.
\end{lemma}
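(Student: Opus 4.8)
The plan is to run a standard verification argument: I would show that the candidate value $V_{y^\ast}$ defined in \eqref{eq Vy} is an $r$-excessive majorant of the payoff $g$, deduce the inequality $V\leq V_{y^\ast}$ from the resulting supermartingale property, and then obtain the reverse inequality directly from the admissibility of $\tau_{y^\ast}$.

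First I would verify the majorization $V_{y^\ast}(x)\geq g(x)$ for every $x\in\mathcal{I}$. For $x\geq y^\ast$ this is an equality by \eqref{eq Vy}. For $x<y^\ast$ the definition of $y^\ast$ in (i) gives $g(x)/\psi(x)\leq g(y^\ast)/\psi(y^\ast)$, and multiplying by $\psi(x)>0$ yields $g(x)\leq\psi(x)g(y^\ast)/\psi(y^\ast)=V_{y^\ast}(x)$, as required. Next, and this is the crux, I would show that $V_{y^\ast}$ is $r$-excessive. Since $V_{y^\ast}$ is continuous and coincides with the $r$-harmonic function $\psi(\cdot)g(y^\ast)/\psi(y^\ast)$ on $(a,y^\ast)$ and with $g$ on $(y^\ast,b)$, it is $r$-superharmonic away from the finite set $\mathcal{P}\cup\{y^\ast\}$: on $(a,y^\ast)$ the generator vanishes, while on $(y^\ast,b)\setminus\mathcal{P}$ condition (ii) gives $(\mathcal{G}_r V_{y^\ast})=(\mathcal{G}_r g)\leq0$. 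It then remains to check the correct downward-kink condition at each exceptional point, namely that the left derivative dominates the right derivative (equivalently, since $S'>0$, that the scale derivative jumps downwards, so that the representing measure of the generator carries no positive mass there). At the interior kinks of $(y^\ast,b)$ this is exactly assumption (iii). At the junction $y^\ast$ the left derivative is $\psi'(y^\ast)g(y^\ast)/\psi(y^\ast)$ while the right derivative is $g'(y^\ast+)$; the required inequality $g'(y^\ast+)\leq\psi'(y^\ast)g(y^\ast)/\psi(y^\ast)$ is precisely the one-sided first-order optimality condition that the right derivative of $g/\psi$ at the maximiser $y^\ast$ be nonpositive, which follows from (i). Hence $V_{y^\ast}$ exhibits only downward kinks and is $r$-superharmonic, so it is $r$-excessive.

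Given $r$-excessivity, $\{e^{-rt}V_{y^\ast}(X_t)\}$ is a nonnegative supermartingale, so optional sampling yields $\E_x[e^{-r\tau}V_{y^\ast}(X_\tau)]\leq V_{y^\ast}(x)$ for every stopping time $\tau$; combining with $V_{y^\ast}\geq g$ and taking the supremum over $\tau$ gives $V(x)\leq V_{y^\ast}(x)$, the integrability needed for the uniform-integrability/limiting arguments being supplied by \eqref{integrability}. The reverse inequality is immediate: $\tau_{y^\ast}$ is admissible and, by the very definition \eqref{eq Vy}, $V_{y^\ast}(x)=\E_x[e^{-r\tau_{y^\ast}}g(X_{\tau_{y^\ast}});\tau_{y^\ast}<\infty]\leq V(x)$. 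Therefore $V=V_{y^\ast}$ and $\tau_{y^\ast}$ is optimal.

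I expect the delicate point to be the rigorous justification of $r$-excessivity across the non-smooth points. A clean route to the supermartingale property is a generalized It\^o (It\^o--Tanaka) formula applied to the difference-of-convex function $V_{y^\ast}$, in which the local-time contributions at $\mathcal{P}\cup\{y^\ast\}$ appear with the sign of the derivative jumps; the downward-kink inequalities obtained above guarantee these contributions are nonpositive, while \eqref{canonical} gives the convenient bookkeeping of the generator's action through the functional $L_u$. Care is also needed to confirm that condition (i) produces a genuine global maximiser (so that the majorization holds throughout $(a,y^\ast)$) and that the supermartingale is of class (D), so that optional sampling applies without boundary pathologies.
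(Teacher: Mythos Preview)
Your proposal is correct and follows the standard verification route: show $V_{y^\ast}\geq g$ via the argmax property, establish $r$-excessivity by checking the sign of the generator on the smooth part together with the downward-kink conditions at the non-smooth points (using a generalized It\^o--Tanaka argument), and then conclude via the supermartingale inequality and admissibility of $\tau_{y^\ast}$.

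The paper proceeds differently. Rather than invoking It\^o--Tanaka and local times, it establishes $r$-excessivity of $V_{y^\ast}$ via the \emph{Martin representation} of excessive functions (Proposition~3.3 in \cite{Salminen1985}). Concretely, it fixes a reference point $x_0\in(y^\ast,b)\setminus\mathcal{P}$, normalises $h_{x_0}=V_{y^\ast}/g(x_0)$, and constructs the representing measure $\sigma_{x_0}^{h_{x_0}}$ explicitly through the functionals $(L_\psi g)$ and $(L_\varphi g)$. Conditions (ii) and (iii), together with \eqref{linearityGenerator}, are used to verify that $\sigma_{x_0}^{h_{x_0}}$ is nonnegative and monotone on each side of $x_0$, and the identity $\sigma_{x_0}^{h_{x_0}}([a,x_0))+\sigma_{x_0}^{h_{x_0}}((x_0,b])=1$ shows it is a probability measure. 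Excessivity of $V_{y^\ast}$ then follows from the Martin representation, and the conclusion is drawn from the characterisation of $V$ as the least $r$-excessive majorant. Your approach is more self-contained and avoids the representing-measure machinery, while the paper's approach ties in directly with the $L_u$-functional apparatus used throughout (cf.\ \eqref{linearityGenerator}--\eqref{canonical}) and makes the connection to Salminen's theory explicit. The two arguments encode the same information: the monotonicity and nonnegativity of $(L_\psi g)$ and $-(L_\varphi g)$ on $[y^\ast,b)$ are precisely equivalent to your generator-sign and downward-kink conditions.
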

\begin{proof}
It is clear that under our assumptions $V_{y^\ast}(x)$ is nonnegative, continuous, and  dominates the exercise payoff $g(x)$ for all $x\in \mathcal{I}$.
Let $x_0\in(y^\ast,b)\setminus\mathcal{P}$ be a fixed reference point and define the ratio $h_{x_0}(x)=V_{y^\ast}(x)/V_{y^\ast}(x_0)=V_{y^\ast}(x)/g(x_0)$.
It is clear that our assumptions combined with \eqref{linearityGenerator} guarantee that
$$
\sigma_{x_0}^{h_{x_0}}((x,b])=\frac{\psi(x_0)}{Bg(x_0)}\left[\frac{g'(x+)}{S'(x)}\varphi(x)-g(x)\frac{\varphi'(x)}{S'(x)}\right]=
-\frac{\psi(x_0)}{Bg(x_0)}(L_\varphi g)(x+)
$$
is nonnegative and nonincreasing for all $x\geq x_0$. Analogously,
$$
\sigma_{x_0}^{h_{x_0}}([a,x))=
\frac{\varphi(x_0)}{Bg(x_0)}\left[g(x)\frac{\psi'(x)}{S'(x)}-\frac{g'(x-)}{S'(x)}\psi(x)\right]\mathbbm{1}_{(y^\ast,x_0]}(x) =
\frac{\varphi(x_0)}{Bg(x_0)}(L_\psi g)(x-)\mathbbm{1}_{(y^\ast,x_0]}(x)
$$
is nonnegative and nondecreasing for all $x\leq x_0$.  Moreover,
noticing that
$
\sigma_{x_0}^{h_{x_0}}([a,x_0))+\sigma_{x_0}^{h_{x_0}}((x_0,b])=1
$
shows,  by imposing the condition $\sigma_{x_0}^{h_{x_0}}(\{x_0\})=0$, that $\sigma_{x_0}^{h_{x_0}}$ constitutes a probability measure.
Therefore, it induces an $r$-excessive function $h_{x_0}(x)$ via its
Martin representation (cf. Proposition 3.3 in \cite{Salminen1985}). However, since increasing linear transformations of excessive functions are excessive and
$h_{x_0}(x)g(x_0)=V_{y^\ast}(x)$, we observe that $V_{y^\ast}(x)$ constitutes an $r$-excessive majorant of $g$ for $X$. Invoking now \eqref{eq Vy} shows that
$V(x)=V_{y^\ast}(x)$ and consequently, that $\tau_{y^\ast}=\inf\{t\geq 0: X_t\geq y^\ast\}$ is an optimal stopping time.
\end{proof}

\begin{remark}\label{sufficient}
It is at this point worth emphasizing that under the following slightly stricter assumptions there always exists a unique maximizing threshold
$y^\ast = \argmax\{g(x)/\psi(x)\}\in (\tilde{x},b)$ and the conditions of Lemma \ref{Martin1} are satisfied
(cf. Lemma 3.4 in \cite{AlMaRa14}):
\begin{itemize}
\item[(A)] $g^{-1}(\mathbb{R}_+)=(x_0,b)$, where $a<x_0<b$, and $b$ is unattainable for $X$,
\item[(B)]  there exists a $\tilde{x}\in \mathcal{I}$ so that $(\mathcal{G}_rg)(x) > 0$ for all $x\in (a,\tilde{x})\setminus \mathcal{P}$ and
 $(\mathcal{G}_rg)(x)< 0$ for all $x\in (\tilde{x},b)\setminus \mathcal{P}$,
 \item[(C)] $g'(x+)\geq g'(x-)$ for all $x\in(a,\tilde{x})\cap \mathcal{P}$ and $g'(x+)\leq g'(x-)$ for all $x\in[\tilde{x},b)\cap \mathcal{P}$
\end{itemize}
These assumptions are typically met in financial applications of optimal stopping.
Note that these conditions do not impose monotonicity requirements on the behavior of the generator $(\mathcal{G}_rg)(x)$ on $\mathcal{I}\setminus\mathcal{P}$ and only the sign of
$(\mathcal{G}_rg)(x)$ essentially counts. As we will later establish, it is precisely this observation which explains why not all single boundary stopping problems can be represented as expected
suprema.
\end{remark}

\subsection{Characterization of $f$}
Let $y\in g^{-1}(\mathbb{R}_+)$ be given. Utilizing the distribution function characterized in \eqref{eq Pmax} yields
\begin{align*}
J_y(x)=\E_x\left[\hat{f}(M_T)\mathbbm{1}_{[y,b)}(M_T)\right]=\psi(x)\int_{x\lor y}^b\hat{f}(z)\frac{\psi'(z)}{\psi^2(z)}dz.
\end{align*}
Given this expression, it is now sufficient to find a function $\hat{f}$ for which the identity $V_{y}(x)=J_y(x)$ holds. This identity holds for $x\geq y$ provided that
the {\em Volterra integral equation of the the first kind}
\begin{align}
\frac{g(x)}{\psi(x)}=\int_x^b\hat{f}(z)\frac{\psi'(z)}{\psi^2(z)}dz\label{kasvavaid}
\end{align}
is satisfied. Identity \eqref{kasvavaid} has several important implications
both on the regularity of $g$ as well as on the limiting behavior of the ratio $g(x)/\psi(x)$ at $b$. First, we immediately notice that representation \eqref{kasvavaid} implies
that we necessarily need to have $\lim_{x\rightarrow b-}g(x)/\psi(x)=0$. Second,
since the integral of an integrable function is continuous, identity \eqref{kasvavaid}  implies that
the exercise payoff $g(x)$ has to be continuous on $[y,b)$. Moreover, if the unknown function $\hat{f}$ is
continuous outside a finite set of points $\mathcal{P}\in [y,b)$, then identity \eqref{kasvavaid} actually implies
that the exercise payoff $g(x)$ has to be continuously differentiable on $x\in[y,b)\setminus \mathcal{P}$ and possesses both right and left derivatives on $x\in \mathcal{P}$.
Thus, \eqref{kasvavaid} demonstrates that the proposed representation cannot hold unless the exercise payoff $g$ satisfies a set of regularity conditions.

Standard differentiation of identity \eqref{kasvavaid} now shows that
for all $x\in [y,b)\setminus \mathcal{P}$ we have
\begin{align}\label{eq f1}
\hat{f}(x)&=g(x)-\psi(x)\frac{g'(x)}{\psi'(x)} = \frac{S'(x)}{\psi'(x)}(L_\psi g)(x),
\end{align}
coinciding with the function $\rho$ derived in \cite{BaBa} by relying on functional concavity arguments.
Thus, with $\hat{f}(x)$ defined in this way we have, by invoking identity \eqref{kasvavaid} and condition $\lim_{x\rightarrow b-}g(x)/\psi(x)=0$, that
\begin{align}\label{eq Jy}
\begin{aligned}
J_y(x)=\psi(x)\int_{x\lor y}^b\frac{\psi'(z)g(z)-g'(z)\psi(z)}{\psi^2(z)}dz=-\psi(x)\int_{x\lor y}^bd\left(\frac{g(z)}{\psi(z)}\right)
=\psi(x)\frac{g(x\lor y)}{\psi(x\lor y)}
\end{aligned}
\end{align}
for all $x\in [y,b)\setminus \mathcal{P}$. We summarize
these findings in the following theorem.
\begin{theorem}\label{theorem inc}
Fix $y\in g^{-1}(\mathbb{R}_+)$ and let $\hat{f}$ be as in \eqref{eq f1}.
Then, if $\lim_{x\rightarrow b-}g(x)/\psi(x)=0$, we have $J_y(x)=V_y(x)$. Moreover,
if $\hat{f}(x)$ is also nonnegative and nondecreasing for all $x\in[y,b)$, then $V_y(x)$ is $r$-excessive for $X$.
\end{theorem}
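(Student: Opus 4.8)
This assertion is largely the assembly of the computation already displayed in \eqref{kasvavaid}--\eqref{eq Jy}, and the plan is to complete it cleanly. First I would differentiate the marginal law \eqref{eq Pmax} in $m$ to obtain the density $\p_x(M_T\in dm)=\psi(x)\frac{\psi'(m)}{\psi^2(m)}\,dm$ on $(x,b)$, so that $J_y(x)=\psi(x)\int_{x\lor y}^b\hat f(z)\frac{\psi'(z)}{\psi^2(z)}\,dz$. Substituting the explicit form \eqref{eq f1} of $\hat f$, the integrand collapses to $-\frac{d}{dz}\bigl(g(z)/\psi(z)\bigr)$, so the integral telescopes; since $g/\psi$ is continuous across the finitely many points of $\mathcal{P}$, the telescoping is unaffected by the kinks. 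Invoking $\lim_{z\to b-}g(z)/\psi(z)=0$ then gives $J_y(x)=\psi(x)g(x\lor y)/\psi(x\lor y)$, which equals $g(x)$ for $x\ge y$ and, by evaluating \eqref{kasvavaid} at $x=y$, equals $\psi(x)g(y)/\psi(y)$ for $x<y$. Comparison with \eqref{eq Vy} yields $J_y=V_y$ on all of $\mathcal{I}$.

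\textbf{Part two ($V_y$ is $r$-excessive).} Here the plan is to show that the two extra hypotheses force $V_y$ to be $r$-superharmonic, and then to conclude via the representing-measure characterization already used in the proof of Lemma \ref{Martin1}. On $(a,y)$ we have $V_y(x)=\psi(x)g(y)/\psi(y)$, a positive multiple of $\psi$, hence $r$-harmonic and in particular $r$-excessive there. On $(y,b)\setminus\mathcal{P}$ we have $V_y=g\in C^2$, so the task reduces to proving $(\mathcal{G}_rg)(x)\le0$. The key computation is the identity obtained by differentiating \eqref{eq f1} and eliminating $g''$ and $\psi''$ with the relation $(\mathcal{G}_r\psi)(x)=0$ (equivalently, via \eqref{linearityGenerator}), namely $\hat f'(x)=-\frac{2\psi(x)}{\sigma^2(x)\psi'(x)}\bigl(r\hat f(x)+(\mathcal{G}_rg)(x)\bigr)$. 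Since $\psi,\psi',\sigma^2>0$, the assumption that $\hat f$ is nondecreasing ($\hat f'\ge0$) together with $\hat f\ge0$ forces $(\mathcal{G}_rg)(x)\le-r\hat f(x)\le0$, which is exactly the required interior superharmonicity; note also that $\hat f\ge0$ already guarantees $g\ge0$ on $[y,b)$ through \eqref{kasvavaid}, so $V_y$ is nonnegative and continuous.

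\textbf{Gluing and conclusion.} It remains to verify the one-sided derivative conditions at $y$ and at the non-smooth points, where an $r$-excessive function must have a nonincreasing scale-derivative $V_y'/S'$, i.e. $V_y'(c-)\ge V_y'(c+)$ at each kink. At $c=y$ one has $V_y'(y-)=\psi'(y)g(y)/\psi(y)$ and $V_y'(y+)=g'(y+)$, and after multiplying by $\psi(y)/\psi'(y)>0$ the inequality $V_y'(y-)\ge V_y'(y+)$ is precisely $\hat f(y)\ge0$. At a point $c\in\mathcal{P}\cap(y,b)$ the monotonicity of $\hat f$ gives $\hat f(c+)\ge\hat f(c-)$, which unwinds through \eqref{eq f1} to $g'(c+)\le g'(c-)$, again the correct drop in the scale-derivative. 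Thus $V_y$ is nonnegative, continuous and $r$-superharmonic with a nonnegative representing measure, hence $r$-excessive (cf. \cite{Salminen1985} and the Martin-representation argument of Lemma \ref{Martin1}). The same conclusion can be reached probabilistically: since $\hat f\mathbbm{1}_{[y,b)}$ is nonnegative and nondecreasing and the paths of $X$ are continuous, the running maximum is attained, giving $\sup_{0\le t\le T}\hat f(X_t)\mathbbm{1}_{[y,b)}(X_t)=\hat f(M_T)\mathbbm{1}_{[y,b)}(M_T)$ and $V_y(x)=\E_x\bigl[\sup_{0\le t\le T}\hat f(X_t)\mathbbm{1}_{[y,b)}(X_t)\bigr]$, which is $r$-excessive by the estimate $e^{-rs}\E_x[V_y(X_s)]=\E_x\bigl[\sup_{s\le t\le T}\hat f(X_t)\mathbbm{1}_{[y,b)}(X_t);\,T>s\bigr]\le V_y(x)$ following from the memorylessness of $T$ and the Markov property. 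The hard part in either route is the careful bookkeeping of the one-sided (kink) conditions at $y$ and on $\mathcal{P}$ and checking that they align with the correct direction of superharmonicity; the interior estimate is immediate once the identity for $\hat f'$ is in hand.
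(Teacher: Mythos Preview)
Your Part one is exactly the paper's argument: the computation \eqref{eq Jy} is already in the text, and the proof simply invokes it.

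For Part two you give two routes. Your probabilistic route---identifying $\hat f(M_T)\mathbbm{1}_{[y,b)}(M_T)$ with $\sup_{t\le T}\hat f(X_t)\mathbbm{1}_{[y,b)}(X_t)$ via monotonicity and upper semicontinuity of $\hat f\,\mathbbm{1}_{[y,b)}$, and then checking the supermean property through memorylessness of $T$ and the Markov property---is precisely the paper's approach. The paper compresses it to one sentence by citing Proposition~2.1 in \cite{FoKn1} (and Lemma~2.2 in \cite{ChSaTa}): the expected supremum of a nonnegative upper semicontinuous function at an independent exponential time is automatically $r$-excessive. You have simply unpacked that citation.

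Your first route, via the identity $\hat f'=-\tfrac{2\psi}{\sigma^2\psi'}\bigl(r\hat f+(\mathcal G_rg)\bigr)$ to deduce $(\mathcal G_rg)\le0$ on $(y,b)\setminus\mathcal P$ and then checking the one-sided derivative drops at $y$ and at points of $\mathcal P$, is a genuinely different argument the paper does not use here. It has the merit of being self-contained (no appeal to \cite{FoKn1}) and of making explicit the equivalence between the sign/monotonicity hypotheses on $\hat f$ and the superharmonicity inequalities $(\mathcal G_rg)\le0$, $g'(c+)\le g'(c-)$; the paper only runs this implication in the opposite direction (Lemma~\ref{Martin1}, Lemma~\ref{lemma mon1}). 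One caveat: your appeal to ``the Martin-representation argument of Lemma~\ref{Martin1}'' is a bit loose. That lemma is written for $y=y^\ast=\argmax\{g/\psi\}$, and reproducing its proof for an arbitrary $y$ requires, in addition to what you have checked, verifying the sign of $(L_\varphi g)$ near $b$ so that the representing measure on $(x_0,b]$ is nonnegative; this uses the limiting condition $g/\psi\to0$ but is not literally contained in Lemma~\ref{Martin1}. The probabilistic route bypasses all of this bookkeeping, which is why the paper prefers it.
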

\begin{proof}
The first claim follows directly from the derivation of \eqref{eq Jy}. If $\hat{f}$ is also nonnegative and nondecreasing for all $x\in[y,b)$, then $\hat{f}(x)\mathbbm{1}_{[y,b)}(x)$ is
nondecreasing, nonnegative, and upper semicontinuous  on $\mathcal{I}$. In that case $\hat{f}(M_T)\mathbbm{1}_{[y,b)}(M_T)=\sup_{t\in[0,T]}\hat{f}(X_t)\mathbbm{1}_{[y,b)}(X_t)$. Proposition 2.1 in \cite{FoKn1} (see also Lemma 2.2 in \cite{ChSaTa}) then guarantees that
$J_y(x)$ is $r$-excessive for $X$. Since $J_y(x)=V_y(x)$ the alleged result follows.
\end{proof}
Theorem \ref{theorem inc} shows that when $\hat{f}$ is chosen according to the rule \eqref{eq f1}
representation $J_y(x)=V_y(x)$ is valid provided that the limiting condition $\lim_{x\rightarrow b-}g(x)/\psi(x)=0$ is met.
Moreover, Theorem \ref{theorem inc} also shows that if $\hat{f}(x)\mathbbm{1}_{[y,b)}(x)$ is also nondecreasing and nonnegative, then the
representation is $r$-excessive for the underlying diffusion $X$. Note, however, that the representation needs not to majorize the exercise payoff and, therefore,
it does not necessarily coincide with the value of the considered stopping problem. Moreover, the monotonicity and nonnegativity of  $\hat{f}(x)\mathbbm{1}_{[y,b)}(x)$
is sufficient but {\em not necessary}  for the $r$-excessivity of $J_y(x)$. As we will later see, there are circumstances where $J_y(x)$ is $r$-excessive even
when $\hat{f}(x)\mathbbm{1}_{[y,b)}(x)$ is not monotonic.

We are now in position to establish the following.
\begin{theorem}\label{meidan esitys1}
Assume that the conditions of Lemma \ref{Martin1} are satisfied and that $\lim_{x\rightarrow b}g(x)/\psi(x)=0$. Then,
$$
V(x)=V_{y^\ast}(x)=J_{y^\ast}(x)=\E_x\left[\hat{f}(M_T)\mathbbm{1}_{[y^\ast,b)}(M_T)\right].
$$
\end{theorem}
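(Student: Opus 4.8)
The plan is to read off the asserted chain of identities by splicing together the two results already established: Lemma \ref{Martin1}, which identifies the value with the threshold value $V_{y^\ast}$, and Theorem \ref{theorem inc}, which identifies $V_{y^\ast}$ with its representation $J_{y^\ast}$. First I would invoke Lemma \ref{Martin1}: since its hypotheses (i)--(iii) are assumed, we obtain directly that $V(x)=V_{y^\ast}(x)$ with $y^\ast=\argmax\{g(x)/\psi(x)\}$ and that $\tau_{y^\ast}$ is optimal. This yields the first equality at no cost.

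The only genuine point to check before applying Theorem \ref{theorem inc} with $y=y^\ast$ is that $y^\ast\in g^{-1}(\mathbb{R}_+)$, which is what makes $V_{y^\ast}$, $J_{y^\ast}$, and the function $\hat f$ of \eqref{eq f1} meaningful in the first place. Since $\psi$ is strictly positive on $\mathcal{I}$ and, by assumption, $g$ takes a strictly positive value somewhere (as $g^{-1}(\mathbb{R}_+)\neq\emptyset$), the maximal value of the ratio $g/\psi$ is itself strictly positive; hence $g(y^\ast)/\psi(y^\ast)>0$, which forces $g(y^\ast)>0$, i.e. $y^\ast\in g^{-1}(\mathbb{R}_+)$. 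With this in hand, and using the hypothesis $\lim_{x\rightarrow b-}g(x)/\psi(x)=0$ (which coincides with the limit appearing in the statement, $b$ being the right endpoint), Theorem \ref{theorem inc} applies with $\hat f$ chosen as in \eqref{eq f1} and delivers $J_{y^\ast}(x)=V_{y^\ast}(x)$ for all $x\in\mathcal{I}$. Concretely this is the computation recorded in \eqref{eq Jy}, where the boundary contribution vanishes precisely because of the limiting condition.

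Finally, the last equality in the statement is simply the defining formula for $J_{y^\ast}$ recorded in Problem \ref{1reuna}, namely $J_{y^\ast}(x)=\E_x[\hat f(M_T)\mathbbm{1}_{[y^\ast,b)}(M_T)]$. Concatenating the three identities $V=V_{y^\ast}$, $V_{y^\ast}=J_{y^\ast}$, and $J_{y^\ast}=\E_x[\hat f(M_T)\mathbbm{1}_{[y^\ast,b)}(M_T)]$ completes the argument.

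I do not expect a substantive obstacle: the statement is essentially a corollary obtained by composing Lemma \ref{Martin1} with Theorem \ref{theorem inc}. The one place demanding care is the verification that the maximizer $y^\ast$ lies in $g^{-1}(\mathbb{R}_+)$, so that the threshold representation of Theorem \ref{theorem inc} is available; beyond that the reasoning is pure bookkeeping. In particular, no monotonicity of $\hat f$ is needed here, because only the value identity (the $J$-form) is being asserted, and not the literal supremum form of Theorem \ref{theorem abo1}, which would additionally require $\hat f\,\mathbbm{1}_{[y^\ast,b)}$ to be nondecreasing and nonnegative.
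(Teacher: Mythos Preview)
Your proposal is correct and follows essentially the same approach as the paper, which simply invokes Theorem \ref{theorem inc} to obtain $J_{y^\ast}=V_{y^\ast}$ and then Lemma \ref{Martin1} to conclude $V=V_{y^\ast}$. Your added verification that $y^\ast\in g^{-1}(\mathbb{R}_+)$ is a detail the paper leaves implicit, but the overall argument is the same bookkeeping combination of these two results.
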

\begin{proof}
It is clear that the conditions of the first claim of Theorem \ref{theorem inc} are satisfied. Consequently, $J_{y^\ast}(x)=V_{y^\ast}(x)$. The alleged result now follows from Lemma \ref{Martin1}.
\end{proof}
Theorem \ref{meidan esitys1} proves that the value of the optimal stopping strategy can be expressed as the expected value of a mapping $\hat{f}$ at the running maximum of the underlying diffusion. This does not yet guarantee that the value of the stopping could be expressed as an expected supremum.
In what follows, our objective is to first determine a set of conditions under which we also have that
$J_y(x)=\E_x\left[\sup_{t\in[0,T]}\hat{f}(X_t)\mathbbm{1}_{[y,b)}(X_t)\right]$. In order to accomplish that objective, we first present an auxiliary result characterizing the circumstances
under which the function $\hat{f}$ is indeed monotonic.
\begin{lemma}\label{lemma mon1}
Let $y\in g^{-1}(\mathbb{R}_+)$ be given. Assume that either
\begin{itemize}
 \item[(A)] $g(x)$ is concave and $\psi(x)$ is convex on $[y,b)$, or
 \item[(B)]  there is a $z\in (a,y)$ so that $g(x)/\psi(x)$ is locally increasing at $z$,  $g'(x+)\leq g'(x-)$ for all $x\in (z,b)\cap\mathcal{P}$, and
 $(\mathcal{G}_rg)(x)$ is non-increasing and non-positive for all $x\in (z,b)$.
 \end{itemize}
Then, the function  $\hat{f}(x)$ characterized by \eqref{eq f1} is non-decreasing on $[y,b)$.
\end{lemma}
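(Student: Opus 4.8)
The plan is to reduce the assertion ``$\hat{f}$ is non-decreasing'' to the monotonicity of the single ratio $g'/\psi'$, and then to verify that monotonicity from each set of hypotheses. From \eqref{eq f1} we have $\hat{f}=g-\psi\,g'/\psi'$, so on $\mathcal{I}\setminus\mathcal{P}$
\begin{equation*}
\hat{f}'(x)=-\psi(x)\Bigl(\tfrac{g'}{\psi'}\Bigr)'(x)=-\frac{\psi(x)}{(\psi'(x))^{2}}\bigl(\psi'(x)g''(x)-\psi''(x)g'(x)\bigr),
\end{equation*}
whereas at a point $p\in\mathcal{P}$ the jump is $\hat{f}(p+)-\hat{f}(p-)=-\tfrac{\psi(p)}{\psi'(p)}\bigl(g'(p+)-g'(p-)\bigr)$. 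Since $\psi>0$, these two facts show that $\hat{f}$ is non-decreasing on $[y,b)$ precisely when $g'/\psi'$ is non-increasing there (non-increasing smooth part together with downward jumps). The hypothesis $g'(x+)\le g'(x-)$ at points of $\mathcal{P}$ is exactly what forces the jumps of $g'/\psi'$ downward, equivalently the jumps of $\hat{f}$ upward; it therefore remains, in both cases, to prove $\psi'g''-\psi''g'\le0$ on the smooth part of the relevant interval.

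Case (A) is the transparent one: concavity of $g$ gives $g''\le0$ with $g'$ non-increasing, convexity of $\psi$ gives $\psi''\ge0$ with $\psi'>0$ non-decreasing, and $g'\ge0$ on $[y,b)$ (a positive, concave, non-decreasing payoff). Hence $\psi'g''\le0$ and $-\psi''g'\le0$, so $g'/\psi'$ is non-increasing. Concavity moreover yields $g'(x+)\le g'(x-)$ automatically, so the jump condition is met for free.

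For the substantive case (B) I would route the computation through the generator. Substituting $\tfrac12\sigma^{2}g''=(\mathcal{G}_rg)-\mu g'+rg$ and $\tfrac12\sigma^{2}\psi''=r\psi-\mu\psi'$ (the latter from $\mathcal{G}_r\psi=0$) into the bracket above, the drift terms cancel and, using $g\psi'-g'\psi=\psi'\hat{f}$,
\begin{equation*}
\psi'(x)g''(x)-\psi''(x)g'(x)=\frac{2\psi'(x)}{\sigma^{2}(x)}\bigl[(\mathcal{G}_rg)(x)+r\hat{f}(x)\bigr],\qquad \hat{f}'(x)=-\frac{2\psi(x)}{\sigma^{2}(x)\psi'(x)}\bigl[(\mathcal{G}_rg)(x)+r\hat{f}(x)\bigr].
\end{equation*}
Thus, setting $\Phi:=(\mathcal{G}_rg)\,\psi'/S'+r(L_\psi g)=\tfrac{\psi'}{S'}[(\mathcal{G}_rg)+r\hat{f}]$ (the last equality by \eqref{eq f1}), we get $\hat{f}'\ge0\iff\Phi\le0$. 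Differentiating $\Phi$ and using \eqref{linearityGenerator} for $(L_\psi g)'$ and $(L_\psi\mathbbm{1})'$, the $r\hat{f}$ and drift contributions cancel and one is left with the clean identity $\Phi'(x)=(\mathcal{G}_rg)'(x)\,\psi'(x)/S'(x)$; hence $\Phi$ inherits the monotonicity of $(\mathcal{G}_rg)$ and is non-increasing on $(z,b)$. It then suffices to check $\Phi(z)\le0$: there $\hat{f}(z)=-\tfrac{\psi^{2}(z)}{\psi'(z)}(g/\psi)'(z)\le0$ because $g/\psi$ is locally increasing at $z$, and $(\mathcal{G}_rg)(z)\le0$ by hypothesis, so $\Phi(z)=\tfrac{\psi'(z)}{S'(z)}[(\mathcal{G}_rg)(z)+r\hat{f}(z)]\le0$. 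Propagating by monotonicity gives $\Phi\le0$, i.e. $\hat{f}'\ge0$, on all of $(z,b)\setminus\mathcal{P}$, and in particular on $[y,b)$.

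The step I expect to be delicate is the behaviour at the points of $\mathcal{P}$ in case (B). Across a downward kink of $g$ the value $\hat{f}$ jumps up, which is exactly what we want for $\hat{f}$; but this same jump tends to raise $\Phi=(\mathcal{G}_rg)+r\hat{f}$, while the accompanying downward movement of $(\mathcal{G}_rg)$ lowers it, and one must argue that the net change in $\Phi$ is $\le0$ so that the inequality $\Phi\le0$ survives into the next smooth interval. The clean way to secure this is to read the monotonicity of $(\mathcal{G}_rg)$ in the scale--speed (measure) sense of \eqref{canonical}: a downward kink of $g'$ contributes a negative atom to $(\mathcal{G}_rg)\,m'\,dx$ that matches exactly the jump of $(L_\psi g)=\hat{f}\,\psi'/S'$, so that $\Phi$ is genuinely non-increasing across $\mathcal{P}$ and the propagation of $\Phi\le0$ from $z$ is uninterrupted. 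The remaining ingredients --- finiteness of the quantities involved and the admissibility of \eqref{eq f1} together with the boundary evaluation at $z$ --- are routine once $\lim_{x\to b}g(x)/\psi(x)=0$ is in force.
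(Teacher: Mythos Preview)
Your proof follows the same skeleton as the paper's: reduce the monotonicity of $\hat f$ to the non-increase of $g'/\psi'$ (together with the jump condition at $\mathcal P$), and for case~(B) introduce exactly the paper's auxiliary quantity $\Phi=\mathcal D=(\mathcal G_rg)\,\psi'/S'+r(L_\psi g)$, check $\Phi(z)\le 0$ from the local increase of $g/\psi$ at $z$, and then propagate to the right. The difference lies in how you propagate. The paper substitutes $(L_\psi g)(x)=(L_\psi g)(z{+})-\int_z^x(\mathcal G_rg)\psi\,m'$ and $\psi'(x)/S'(x)=\psi'(z)/S'(z)+r\int_z^x\psi\,m'$, and then uses $(\mathcal G_rg)(t)\ge(\mathcal G_rg)(x)$ inside the integral to collapse directly to $\mathcal D(x)\le(\mathcal G_rg)(x)\,\psi'(z)/S'(z)+r(L_\psi g)(z{+})\le 0$. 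You instead differentiate and obtain the clean identity $\Phi'=(\mathcal G_rg)'\,\psi'/S'$, so $\Phi$ inherits the monotonicity of $\mathcal G_rg$. Your route exposes the cancellation more transparently but formally needs $(\mathcal G_rg)'$ to exist; the paper's integral comparison uses only that $\mathcal G_rg$ is non-increasing and is in that sense slightly more robust.

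Two minor points. In case~(A) you invoke $g'\ge 0$, which is not among the stated hypotheses; the paper simply asserts the conclusion in one line and does not supply that step either, so both treatments are terse here. Your explicit discussion of how $\Phi$ behaves across points of $\mathcal P$ actually goes beyond the paper, which only records the jump of $\hat f$ itself and does not revisit $\mathcal D$ across $\mathcal P$.
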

\begin{proof}
It is clear from \eqref{eq f1} that the required monotonicity of $\hat{f}$ is met provided that inequality
\begin{align}
\frac{d}{dx}\left(\frac{g'(x)}{\psi'(x)}\right)&<0\label{eq nec f1}
\end{align}
is satisfied for all $x\in [y,b)\setminus \mathcal{P}$ and
\begin{align}
\hat{f}(x+) -\hat{f}(x-) = \frac{g'(x-)-g'(x+)}{\psi'(x)}> 0\label{eq nec f2}
\end{align}
for all $x\in [y,b)\cap \mathcal{P}$. First, if $g$ is concave and $\psi$ is convex on $[y,b)$, then the inequalities \eqref{eq nec f1} and
\eqref{eq nec f2} are satisfied and $g'(x)/\psi'(x)$ is non-increasing on $[y,b)$ as claimed.
Assume now instead that the conditions of part (B) are satisfied. It is clear that since $[y,b)\subset(z,b)$ \eqref{eq nec f2} is satisfied by assumption
for all $x\in [y,b)\cap\mathcal{P}$. On the other hand, standard differentiation shows that for all $x\in(z,b)\setminus \mathcal{P}$
\begin{align*}
\frac{d}{dx}\left(\frac{g'(x)}{\psi'(x)}\right)=\frac{S'(x)}{{\psi'}^2(x)}\left[\frac{g''(x)}{S'(x)}\psi'(x)-\frac{\psi''(x)}{S'(x)}g'(x)\right]
=\frac{2S'(x)\mathcal{D}(x)}{\sigma^2(x){\psi'}^2(x)}.
\end{align*}
where
$$
\mathcal{D}(x)=(\mathcal{G}_rg)(x)\frac{\psi'(x)}{S'(x)}+r(L_\psi g)(x).
$$
The assumed monotonicity and non-positivity of $(\mathcal{G}_rg)(x)$ on $(z,b)\setminus \mathcal{P}$ now implies that
\begin{align*}
\mathcal{D}(x)&=(\mathcal{G}_rg)(x)\frac{\psi'(x)}{S'(x)}-r\int_z^x\psi(t)(\mathcal{G}_rg)(t)m'(t)dt+r(L_\psi g)(z+)\\
&\leq (\mathcal{G}_rg)(x)\frac{\psi'(z)}{S'(z)}+r(L_\psi g)(z+)\leq r(L_\psi g)(z+)
\end{align*}
for all $x\in(z,b)\setminus \mathcal{P}$. However, the assumed monotonicity of $g(x)/\psi(x)$ in a neighborhood of $z$ then guarantees that
$(L_\psi g)(z+) \leq 0$, proving that $\mathcal{D}(x)\leq 0$ for all $x\in(z,b)\setminus \mathcal{P}$.
\end{proof}
Lemma \ref{lemma mon1} states a set of conditions under which the function  $\hat{f}(x)$ characterized by \eqref{eq f1} is non-decreasing on the set $[y,b)$ and, therefore, the function
$\hat{f}(x)\mathbbm{1}_{[y,b)}(x)$ is nondecreasing on $\mathcal{I}$. Interestingly, the first of these conditions is based solely on the concavity of the exercise payoff and the convexity of the increasing fundamental
solution without imposing further requirements. A sufficient condition for the
convexity of the fundamental solution $\psi(x)$ is that $\mu(x)-rx$ is non-increasing on $\mathcal{I}$ and $a$ is unattainable for the underlying diffusion (see \cite{Alvarez2003}).
Consequently, part (A) of Lemma \ref{lemma mon1} essentially delineates circumstances under which the monotonicity
of function $\hat{f}(x)$ could be, in principle,
characterized solely based on the infinitesimal characteristics of the underlying diffusion and the concavity of the exercise payoff.
Part (B) of Lemma \ref{lemma mon1} shows, in turn, how the monotonicity of the  function  $\hat{f}(x)$ is associated with the monotonicity of the generator $(\mathcal{G}_rg)(x)$.
The conditions of part (B) of Lemma \ref{lemma mon1} are satisfied, for example,  under the assumptions of Remark \ref{sufficient} provided that
$(\mathcal{G}_rg)(x)$ is non-increasing on $(\tilde{x},b)$ and $z\in (\tilde{x}, y\wedge y^\ast)$.

Moreover, it is clear that under the conditions of Lemma \ref{lemma mon1} we have $J_y(x)=V_y(x)$ for all $y\in\mathcal{I}$.
However, without imposing further restrictions on the behavior of the payoff we do not know whether $\hat{f}(x)\mathbbm{1}_{[y,b)}(x)$ generates
the smallest $r$-excessive majorant of the exercise payoff $g$ or not, nor do we know how $\hat{f}(x)\mathbbm{1}_{[y,b)}(x)$ behaves in the neighborhood of the optimal
stopping boundary. Our next theorem summarizes a set of conditions under which these questions can be unambiguously answered.
\begin{theorem}\label{thm optimal condition}
Define $y^\ast = \inf\{y: \hat{f}(y)\geq 0\}$ and assume that the conditions (A) or (B) of Lemma \ref{lemma mon1} are satisfied on $[y^\ast,b)$.
Then, $y^\ast = \argmax\{g(x)/\psi(x)\}\in \mathcal{I}$. Especially, $\hat{f}(y^\ast)=0$ if $y^\ast\in \mathcal{I}\setminus \mathcal{P}$ and
$$
\hat{f}(y^\ast) = g(y^\ast)-\frac{\psi(y^\ast)}{\psi'(y^\ast)}g'(y^\ast+)> 0
$$ if $y^\ast\in \mathcal{P}$. Moreover,
\begin{align}
\hat{f}(x) = \frac{S'(x)}{\psi'(x)}(L_\psi g)(x+) = \frac{(L_\psi g)(y^\ast+)-\int_{y^\ast}^{x}(\mathcal{G}_rg)(z)\psi(z)m'(z)dz}{r\int_{a}^{x}\psi(z)m'(z)dz+\psi'(a+)/S'(a+)}\label{esitys1}
\end{align}
for all $x\in (y^\ast,b)\setminus \mathcal{P}$, and
$$
V(x)=V_{y^{\ast}}(x)=J_{y^\ast}(x)=\psi(x)\sup_{y\geq x}\left[\frac{g(y)}{\psi(y)}\right]=\psi(x)\frac{g(x\lor y^\ast)}{\psi(x\lor y^\ast)}=
\E_x\left[\sup_{t\in[0,T]}\hat{f}(X_t)\mathbbm{1}_{[y^\ast,b)}(X_t)\right].
$$
\end{theorem}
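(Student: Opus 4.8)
The plan is to drive everything off the single sign identity, obtained from \eqref{eq f1},
$$\frac{d}{dx}\left(\frac{g(x)}{\psi(x)}\right)=\frac{g'(x)\psi(x)-g(x)\psi'(x)}{\psi^2(x)}=-\frac{S'(x)}{\psi^2(x)}(L_\psi g)(x)=-\frac{\psi'(x)}{\psi^2(x)}\hat{f}(x),$$
which, since $\psi,\psi'>0$, says that the sign of $\hat{f}$ is exactly opposite to the direction of monotonicity of $g/\psi$. First I would locate $y^\ast$: by the defining infimum $\hat{f}(x)<0$ for every $x<y^\ast$, so $g/\psi$ is strictly increasing on $(a,y^\ast)$, while Lemma \ref{lemma mon1} makes $\hat{f}$ non-decreasing on $[y^\ast,b)$ with $\hat{f}\geq0$ there, so $g/\psi$ is non-increasing on $[y^\ast,b)$. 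Hence $g/\psi$ rises up to $y^\ast$ and falls afterwards, giving both $y^\ast=\argmax\{g(x)/\psi(x)\}\in\mathcal{I}$ and the identity $\sup_{y\geq x}[g(y)/\psi(y)]=g(x\lor y^\ast)/\psi(x\lor y^\ast)$ needed at the end. The value of $\hat{f}$ at the threshold I would settle by one-sided limits: at a smooth point $y^\ast\in\mathcal{I}\setminus\mathcal{P}$ continuity squeezes $\hat{f}(y^\ast)$ between its nonpositive left values and nonnegative right values, forcing $\hat{f}(y^\ast)=0$; at a kink $y^\ast\in\mathcal{P}$ the upward jump \eqref{eq nec f2}, namely $\hat{f}(y^\ast+)-\hat{f}(y^\ast-)=[g'(y^\ast-)-g'(y^\ast+)]/\psi'(y^\ast)>0$, carries the nonpositive left value strictly above zero, yielding $\hat{f}(y^\ast)=g(y^\ast)-\frac{\psi(y^\ast)}{\psi'(y^\ast)}g'(y^\ast+)>0$.

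For the explicit formula \eqref{esitys1} I would rewrite $\hat{f}=\frac{S'}{\psi'}(L_\psi g)$ from \eqref{eq f1} and substitute two integrated identities. Applying the canonical relation \eqref{canonical} with $u=\psi$, $z=y^\ast$, $y=x$ gives the numerator $(L_\psi g)(x)=(L_\psi g)(y^\ast+)-\int_{y^\ast}^x(\mathcal{G}_rg)(z)\psi(z)m'(z)\,dz$; integrating \eqref{linearityGenerator} for $g\equiv\mathbbm{1}$ (so $(L_\psi\mathbbm{1})=\psi'/S'$ and $(L_\psi\mathbbm{1})'=r\psi m'$) from $a$ to $x$ gives the denominator $\psi'(x)/S'(x)=r\int_a^x\psi(z)m'(z)\,dz+\psi'(a+)/S'(a+)$. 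Dividing one by the other reproduces \eqref{esitys1}; this step is purely computational.

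The substantive step is upgrading $V_{y^\ast}$ to the actual value, which I would do by verifying the three hypotheses of Lemma \ref{Martin1}. Condition (i) is the argmax already established. Condition (iii), $g'(x+)\leq g'(x-)$ on $[y^\ast,b)\cap\mathcal{P}$, is exactly the jump hypothesis built into parts (A) (via concavity of $g$) and (B) of Lemma \ref{lemma mon1}. Condition (ii), $(\mathcal{G}_rg)\leq0$ on $[y^\ast,b)\setminus\mathcal{P}$, is the one needing an argument: Lemma \ref{lemma mon1} secures $\mathcal{D}(x)=(\mathcal{G}_rg)(x)\psi'(x)/S'(x)+r(L_\psi g)(x)\leq0$, while $\hat{f}\geq0$ on $[y^\ast,b)$ means $(L_\psi g)\geq0$ there, and the two together force $(\mathcal{G}_rg)(x)\psi'(x)/S'(x)\leq-r(L_\psi g)(x)\leq0$, hence $(\mathcal{G}_rg)(x)\leq0$. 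With all three conditions in hand Lemma \ref{Martin1} yields $V=V_{y^\ast}$ and the optimality of $\tau_{y^\ast}$.

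Finally I would close the chain of equalities. Theorem \ref{theorem inc}, applicable because $\hat{f}$ is given by \eqref{eq f1} and $\lim_{x\to b-}g/\psi=0$, delivers $J_{y^\ast}=V_{y^\ast}$, and \eqref{eq Jy} evaluates $J_{y^\ast}(x)=\psi(x)g(x\lor y^\ast)/\psi(x\lor y^\ast)$, which by the monotonicity of $g/\psi$ equals $\psi(x)\sup_{y\geq x}[g(y)/\psi(y)]$. Since $\hat{f}$ is non-decreasing and non-negative on $[y^\ast,b)$, the map $\hat{f}(x)\mathbbm{1}_{[y^\ast,b)}(x)$ is non-decreasing, non-negative and upper semicontinuous on $\mathcal{I}$, so its running supremum is attained at the running maximum, $\sup_{t\in[0,T]}\hat{f}(X_t)\mathbbm{1}_{[y^\ast,b)}(X_t)=\hat{f}(M_T)\mathbbm{1}_{[y^\ast,b)}(M_T)$ (the same device used in the proof of Theorem \ref{theorem inc}); taking expectations identifies the expected supremum with $J_{y^\ast}$ and completes the string of equalities. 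The main obstacle I anticipate is the bookkeeping at $y^\ast$ in the non-smooth case, namely establishing the strict sign $\hat{f}(y^\ast)>0$ and the correct one-sided monotonicity of $g/\psi$ through the kink, together with the deduction of condition (ii) of Lemma \ref{Martin1} from the sign of $\mathcal{D}$; the remaining equalities are then routine applications of the cited results.
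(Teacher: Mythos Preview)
Your proposal is correct and, in its overall architecture, mirrors the paper's proof closely: the same derivative identity $\frac{d}{dx}(g/\psi)=-\frac{\psi'}{\psi^2}\hat f$ is used to locate $y^\ast$ as the argmax, the value of $\hat f$ at $y^\ast$ is read off from one-sided limits, the integral formula \eqref{esitys1} is obtained by inserting the canonical identities into $\hat f=\frac{S'}{\psi'}(L_\psi g)$, and the expected-supremum identification goes through Theorem \ref{theorem inc}.

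The one place where you take a different path is in proving $V=V_{y^\ast}$. The paper does this in one line: Theorem \ref{theorem inc} already gives that $J_{y^\ast}=V_{y^\ast}$ is $r$-excessive (because $\hat f\mathbbm{1}_{[y^\ast,b)}$ is nonnegative and nondecreasing), and the argmax property immediately yields $V_{y^\ast}\geq g$; hence $V_{y^\ast}$ is an $r$-excessive majorant, and since $V_{y^\ast}$ is attained by the admissible stopping time $\tau_{y^\ast}$, equality $V=V_{y^\ast}$ follows. You instead route through Lemma \ref{Martin1}, which forces you to manufacture condition (ii), $(\mathcal{G}_rg)\leq 0$ on $[y^\ast,b)$, from the inequality $\mathcal{D}\leq 0$ together with $(L_\psi g)\geq 0$. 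That deduction is correct (and gives the sign of the generator on the stopping region as a pleasant byproduct), but it is extra work the paper does not need: the excessivity furnished by Theorem \ref{theorem inc} plus the majorant property from the argmax are already sufficient. So your argument is sound but slightly longer at this step than the paper's.
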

\begin{proof}
We first observe that condition (A) or (B) of Lemma \ref{lemma mon1} guarantee that $\hat{f}(x)$ is nondecreasing on $[y^\ast,b)$. However, since
$$
\frac{d}{dx}\left(\frac{g(x)}{\psi(x)}\right)=-\frac{\psi'(x)}{\psi^2(x)}\hat{f}(x),
$$
and the ratio $g(x)/\psi(x)$ is continuous, we notice that $g(x)/\psi(x)$ is increasing on $(a,y^\ast)$ and decreasing on $(y^\ast,b)$. Consequently, $y^\ast = \argmax\{g(x)/\psi(x)\}$.
As is clear, if $y^\ast \in\mathcal{I}\setminus \mathcal{P}$, then
we necessarily have $g'(y^\ast)\psi(y^\ast)=g(y^\ast)\psi'(y^\ast)$ showing that $\hat{f}(y^\ast)=0$ in that case. If the optimum is, however, attained on $\mathcal{P}$, then we necessarily have
that $g'(y^\ast-)\psi(y^\ast)\geq g(y^\ast)\psi'(y^\ast)\geq g'(y^\ast+)\psi(y^\ast)$, where at least one of the inequalities is strict, proving that $\hat{f}(y^\ast+)>0$ in that case.

The last claim follows from the identity $\hat{f}(x)=\frac{S'(x)}{\psi'(x)}(L_\psi g)(x)$ by invoking the canonical form
$$
\frac{\psi'(x)}{S'(x)}-\frac{\psi'(a+)}{S'(a+)}=r\int_a^x\psi(z)m'(z)dz
$$
and noticing that
$$
(L_\psi g)'(x)=-(\mathcal{G}_rg)(x)\psi(x)m'(x)
$$
for all $x\in \mathcal{I}\setminus \mathcal{P}$. Finally, identity $V_{y^{\ast}}(x)=J_{y^\ast}(x)=V(x)$ follows from Theorem \ref{theorem inc} after noticing that
identity $y^\ast=\argmax\{g(x)/\psi(x)\}$ guarantees that the proposed value dominates the exercise payoff.
\end{proof}
Theorem \ref{thm optimal condition} shows that the continuity of the function $\hat{f}$ at the optimal boundary $y^\ast$ coincides with the standard
{\em smooth fit principle} requiring that the value should be continuously differentiable across the optimal boundary. However, as is clear from
Theorem \ref{thm optimal condition}, if the optimal boundary is attained at a threshold where the exercise payoff is not differentiable, then $\hat{f}$
is discontinuous at the optimal boundary $y^\ast$. Furthermore, since the nonnegativity and monotonicity of $\hat{f}(x)\mathbbm{1}_{[y^\ast,b)}(x)$ on $[y^\ast,b)$ are sufficient
for the validity of Theorem \ref{thm optimal condition}, we observe in accordance with the results by \cite{ChSaTa} that $\hat{f}(x)\mathbbm{1}_{[y^\ast,b)}(x)$ is
only upper semicontinuous on $\mathcal{I}$.

Theorem \ref{thm optimal condition} also shows that $\hat{f}(x)$ has a neat integral representation \eqref{esitys1} capturing the size of the potential
discontinuity of $\hat{f}(x)$ at $y^\ast$. In the case where $a$ is unattainable and the smooth fit principle is satisfied at $y^\ast$ \eqref{esitys1}
can be re-expressed as  (cf. Proposition 2.13 in \cite{ChSaTa})
\begin{align}
\hat{f}(x) = \frac{\int_{y^\ast}^{x}(\mathcal{G}_rg)(z)\psi(z)m'(z)dz}{r\int_{a}^{x}\psi(z)m'(z)dz}\label{esitys1b}
\end{align}
and, hence,
\begin{align}
V(x) = \mathbb{E}_x\left[\frac{\int_{y^\ast}^{M_T}(\mathcal{G}_rg)(z)\psi(z)m'(z)dz}{r\int_{a}^{M_T}\psi(z)m'(z)dz}\mathbbm{1}_{[y^\ast,b)}(M_T)\right]\label{esitys1c}
\end{align}
Finally, it is clear that if the sufficient conditions stated in Remark \ref{sufficient} are satisfied, and in addition $(\mathcal{G}_rg)(x)$ is non-increasing on $(y^\ast,b)$,
and $a$ is unattainable for the underlying diffusion, then the conditions of  Theorem \ref{thm optimal condition} are met and
\begin{align*}
V(x) = \mathbb{E}_x\left[\sup_{t\in [0,T]}\left(\frac{\int_{y^\ast}^{X_t}(\mathcal{G}_rg)(z)\psi(z)m'(z)dz}{r\int_{a}^{X_t}\psi(z)m'(z)dz}\mathbbm{1}_{[y^\ast,b)}(X_t)\right)\right].
\end{align*}

\subsection{Examples}
We now illustrate our general findings in two separate examples. The first example focuses on a case where the payoff is smooth and the stopping strategy is of
the single boundary type. Despite these favorable properties, we will show that it does not always result into a value characterizable as an expected supremum.
The second example, in turn, focuses on a less smooth case resulting into a representation where the function $f(x)\mathbbm{1}_{[y,b)}(x)$ is monotone but not everywhere continuous.
\subsubsection{Example 1: Smooth Payoff}
In order to illustrate our findings we now assume that the upper boundary $b$ is unattainable for $X$ and that the exercise payoff can be expressed as an expected cumulative present value
$g(x) = (R_r\pi)(x)$
for some continuous revenue flow $\pi\in \mathcal{L}^1(\mathcal{I})$ satisfying the conditions $\pi(x)\gtreqqless 0$ for $x \gtreqqless x_0$, where $x_0\in (a,b)$,
$\lim_{x\downarrow a}\pi(x)< -\varepsilon$ and
$\lim_{x\uparrow b}\pi(x) > \varepsilon$ for some $\varepsilon > 0$.

It is clear that under these conditions the exercise payoff satisfies the conditions $g\in C^2(\mathcal{I})$ and
$(\mathcal{G}_rg)(x)= -\pi(x)\lesseqqgtr 0$ for $x \gtreqqless x_0$. Moreover, utilizing representation \eqref{Green} shows that
under our assumptions
\begin{align*}
(L_\psi g)(x)=\int_a^x\psi(t)\pi(t)m'(t)dt
\end{align*}
It is clear from our assumption that $(L_\psi g)(x)<0$ for all $x\leq x_0$ and $(L_\psi g)(x)$ is monotonically increasing on $(x_0,b)$. Fix $x_1>x_0$. Then a
standard application of the mean value theorem
yields
$$
(L_\psi g)(x)=(L_\psi g)(x_1)+\int_{x_1}^x\psi(t)\pi(t)m'(t)dt = (L_\psi g)(x_1)+\frac{\pi(\xi)}{r}\left[\frac{\psi'(x)}{S'(x)}-\frac{\psi'(x_1)}{S'(x_1)}\right],
$$
where $\xi\in(x_1,x)$. Letting $x\rightarrow b$ and noticing that $\psi'(x)/S'(x)\rightarrow \infty$ as $x\rightarrow b$ (since $b$ was assumed to be unattainable
for $X$, cf. p. 19 in \cite{BorSal02}) then shows that
$\lim_{x\uparrow b}(L_\psi g)(x)=\infty$ proving that equation $(L_\psi g)(x)=0$ has a unique root $y^\ast\in(x_0,b)$ and that $y^\ast = \argmax\{(R_r\pi)(x)/\psi(x)\}$. Moreover, the value
\eqref{eq prob} can be expressed as
$$
V(x) = \psi(x)\sup_{y\geq x}\left[\frac{(R_r\pi)(x)}{\psi(x)}\right] = \begin{cases}
(R_r\pi)(x) &x\geq y^\ast\\
\frac{(R_r\pi)(y^\ast)}{\psi(y^\ast)}\psi(x) &x<y^\ast.
\end{cases}
$$
It is clear that under our assumptions the function $f(x)$ characterized in Theorem \ref{theorem inc} can be expressed  as
$$
f(x)=\frac{S'(x)}{\psi'(x)}\int_{a}^x\psi(y)\pi(y)m'(y)dy.
$$
As was established in Theorem \ref{thm optimal condition}, we have that $f(y^\ast)=0$ and, therefore,
$$
V(x)=\E_x\left[\frac{S'(M_T)}{\psi'(M_T)}\int_{y^{\ast}}^{M_T}\psi(y)\pi(y)m'(y)dy\mathbbm{1}_{[y^{\ast},b)}(M_T)\right] = \psi(x)\frac{(R_r\pi)(y^\ast\lor x)}{\psi(y^\ast\lor x)}.
$$
Moreover,
standard differentiation now shows that for all $x\in(y^\ast,b)$ we have
$$
f'(x) = \frac{2S'(x)\psi(x)}{{\psi'}^2(x)\sigma^2(x)}\left[\pi(x)\frac{\psi'(x)}{S'(x)}-r\int_{y^\ast}^{x}\psi(t)\pi(t)m'(t)dt\right]
$$
demonstrating that $f$ is nondecreasing for $x\in(y^\ast,b)$ only if
$$
\pi(x)\frac{\psi'(x)}{S'(x)}\geq r\int_{y^\ast}^{x}\psi(t)\pi(t)m'(t)dt
$$
for all $x\geq y^\ast$. Otherwise it is clear from our results that the value of the considered optimal stopping problem {\em cannot} be expressed as an expected supremum
(see Figure \ref{fig example 1}(A)).
A simple sufficient condition guaranteeing the required monotonicity is to assume that $\pi(x)$ is nondecreasing on $(x_0,b)$ since in that case we have
$$
f'(x)\geq\frac{2S'(x)\psi(x)}{{\psi'}^2(x)\sigma^2(x)}\pi(x)\frac{\psi'(y^\ast)}{S'(y^\ast)}\geq 0.
$$
If this is indeed the case, then $\sup_{t\in[0,T]}f(X_t)\mathbbm{1}_{[y^\ast,b)}(X_t) = f(M_T)\mathbbm{1}_{[y^\ast,b)}(M_T)$ and
$$V(x) = \mathbb{E}_x\left[\sup_{t\in[0,T]}f(X_t)\mathbbm{1}_{[y^\ast,b)}(X_t)\right] = \mathbb{E}_x\left[f(M_T)\mathbbm{1}_{[y^\ast,b)}(M_T)\right].$$

\subsubsection{Example 2: Capped Call Option}
In order to illustrate our findings in a nondifferentiable setting, assume now that the upper boundary $b$ is unattainable for $X$ and that the exercise
payoff $g(x) = \min((x-K)^+, C)$ (a {\em capped call option}), with $a < K < C < K+C < b$, satisfies the limiting inequality
\begin{align}
\lim_{x\downarrow a}\frac{|x-K|}{\varphi(x)}<\infty.\label{limit}
\end{align}
Assume also that the
appreciation rate $\theta(x)=\mu(x)-r(x-K)$ satisfies the conditions $\theta\in\mathcal{L}^1_r(\mathcal{I})$, $\theta(x)\gtreqqless 0$ for $x\lesseqqgtr x_0^\theta$,
where $x_0^\theta\in \mathcal{I}$,
and $\lim_{x\rightarrow b}\theta(x) < -\varepsilon$ for $\varepsilon >0$.

It is now clear that the conditions of Remark \ref{sufficient} are satisfied. Thus, we known that there exists a unique optimal exercise threshold
$x^\ast =\argmax\{g(x)/\psi(x)\}$ and $V(x)=V_{x^\ast}(x)$. Our objective is now to prove that
this threshold reads as
$x^\ast = \min(C+K,y^\ast)$,
where $y^\ast > x_0^\theta$ is the unique root of equation
$$
\int_a^{y^\ast}\psi(y)\theta(y)m'(y)dy=\frac{a-K}{\varphi(a)}.
$$
To see that this is indeed the case, we first observe by applying part (A) of Corollary 3.2 in \cite{Al04} combined with the limiting condition \eqref{limit} that
$$
\frac{\psi^2(x)}{S'(x)}\frac{d}{dx}\left[\frac{x-K}{\psi(x)}\right] =\frac{\psi(x)}{S'(x)}-(x-K)\frac{\psi'(x)}{S'(x)}=\int_a^x\psi(t)\theta(t)m'(t)dt-\frac{a-K}{\varphi(a)}.
$$
Applying analogous arguments with the ones in Example 1, we find that equation
$$
\int_a^x\psi(t)\theta(t)m'(t)dt-\frac{a-K}{\varphi(a)}=0
$$
has a unique root $y^\ast\in (x_0^\theta,b)$ so that $y^\ast=\argmax\{(x-K)/\psi(x)\}$. Moreover,
$$
U(x)=\sup_{\tau}\mathbb{E}_x\left[e^{-r\tau}(X_\tau-K)^+\right]=\begin{cases}
x-K &x\geq y^\ast\\
(y^\ast-K)\frac{\psi(x)}{\psi(y^\ast)} &x<y^\ast.
\end{cases}
$$

In light of these observations, we find that if $y^\ast\in (K,K+C)$, then it is sufficient to notice that $V_{x^\ast}(x)=\min(C,U(x))$ is $r$-excessive since
constants are $r$-excessive and $U(x)$ is also $r$-excessive. Moreover, since
both $C$ and $U(x)$ dominate the payoff, we notice that $V_{x^\ast}(x)=\min(C,U(x))$ constitutes the smallest $r$-excessive majorant of $g(x)$ and, therefore, $V(x)=V_{x^\ast}(x)=\min(C,U(x))$.
If instead $y^\ast\geq K+C$, then $x^\ast=K+C=\argmax\{g(x)/\psi(x)\}$ and the optimal policy is to follow the stopping policy $\tau_{x^\ast}=\inf\{t\geq 0: X_t\geq K+C\}$ with a value
$$
\tilde{U}(x)=C\mathbb{E}_x\left[e^{-r\tau_{x^\ast}}\right]=\begin{cases}
C &x\geq C+K\\
C\frac{\psi(x)}{\psi(C+K)} &x<C+K.
\end{cases}
$$
Given these findings, we notice that if $y^\ast\geq K+C$, then
$$
f(x) = C\mathbbm{1}_{[x^\ast,b)}(x)\geq 0
$$
is nonnegative and nondecreasing and, consequently,
$$
V(x)=C\mathbb{E}_x\left[\mathbbm{1}_{[x^\ast,b)}(M_T)\right]=C\mathbb{P}_x\left[M_T\geq K+C\right].
$$
However, since $f(x^\ast-)=0$ and $f(x^\ast+)=C$ we notice that $f$ is discontinuous at the optimal threshold $x^\ast$ (see Figure \ref{fig example 1}(B)).
If $y^\ast<K+C$, then the nonnegative function
$$
f(x)=\begin{cases}
C &x\geq C+K\\
x-K-\frac{\psi(x)}{\psi'(x)} &x\in[y^\ast,K+C)
\end{cases}
$$
in nondecreasing only if the increasing fundamental solution is convex on $(y^\ast,K+C)$ (it has to be locally convex at $y^\ast$). If the convexity requirement is met, then
$$
V(x)=\mathbb{E}_x\left[\left(M_T-K-\frac{\psi(M_T)}{\psi'(M_T)}\right)\mathbbm{1}_{[y^\ast,C+K)}(M_T)\right]+C\mathbb{P}_x\left[M_T\geq C+K\right].
$$
Moreover, since
$f(C+K+)=C > C-\frac{\psi(C+K-)}{\psi'(C+K-)} = f(C+K-)$, we notice that $f$ is discontinuous at $C+K$.

\begin{figure}[!ht]
\begin{center}
\begin{subfigure}[b]{0.4\textwidth}
\begin{center}
\includegraphics[width=\textwidth]{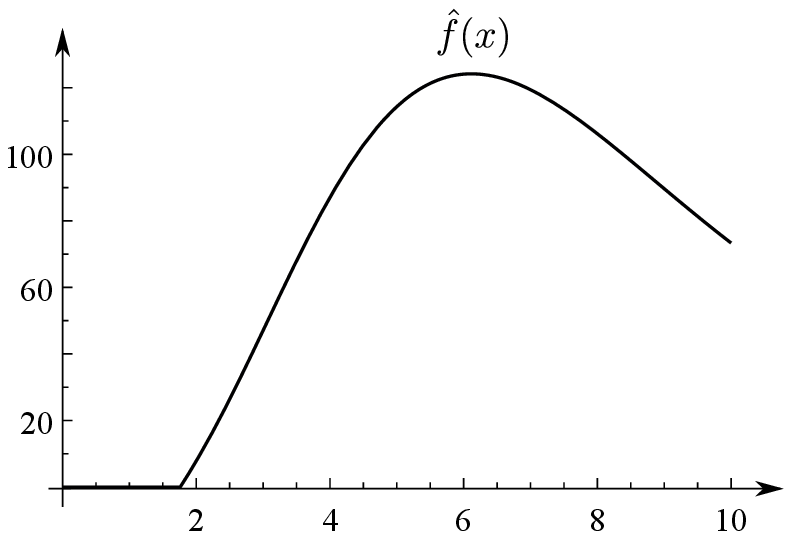}
\end{center}
\caption{\tiny Example 1: Smooth payoff with $\pi(x)=(x^5-2)e^{-x}+1$ leads to a non-increasing $\hat{f}$. In this case the representation as an expected supremum fails to exist.}\label{fig example 1a}
\end{subfigure}
\qquad
\begin{subfigure}[b]{0.4\textwidth}
\begin{center}
\includegraphics[width=\textwidth]{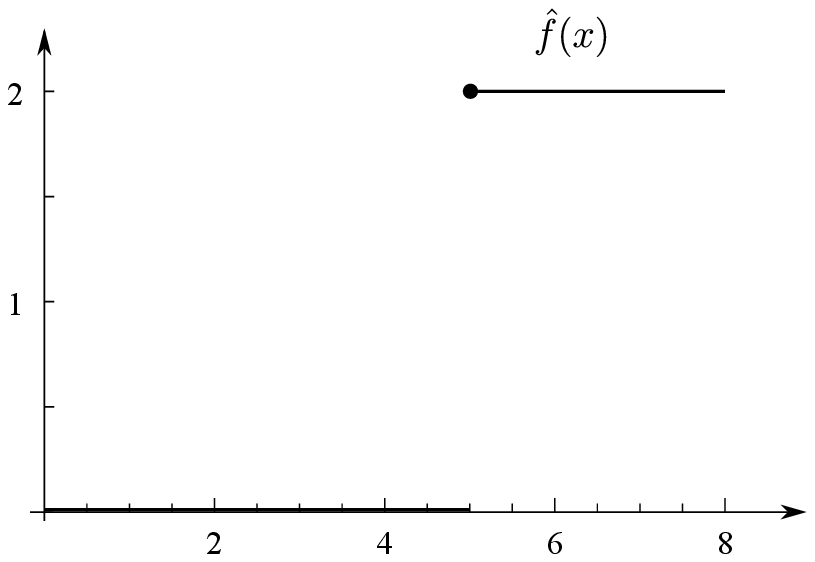}
\end{center}
\caption{\tiny Example 2: Capped call option with $g(x)=\min\{(x-3)^+,2\}$ leads to a discontinuous $\hat{f}$.\\ }\label{fig example 1b}
\end{subfigure}
\end{center}
\caption{\small Numerical examples based on geometric Brownian motion. Parameters have been chosen such that $\psi=x^2$ and $\varphi=x^{-4}$}\label{fig example 1}
\end{figure}

\section{Two-boundary case}

Having considered the one-sided stopping policies our objective is to now extend our analysis to a two-boundary setting and determine a representation of the value in terms of a supremum
of a given function satisfying a set of regularity and monotonicity conditions. In order to accomplish this task,
we assume throughout this section that $g:\mathcal{I}\mapsto\R$ be a continuous payoff function for which $g^{-1}(\mathbb{R}_+)\neq\emptyset$ and satisfying condition
\begin{align}\label{integrability2}
\mathbb{E}_x\left[\left(\sup_{t\geq 0}e^{-rt}g(X_t)\right)\lor\left(-\inf_{t\geq 0}e^{-rt}g(X_t)\right)\right]<\infty
\end{align}
for all $x\in \mathcal{I}$. Along the lines of the single boundary setting we also assume that $g\in C^1(\mathcal{I}\setminus \mathcal{P})\cap C^2(\mathcal{I}\setminus \mathcal{P})$, where
$\mathcal{P}\in \mathcal{I}$ is a finite set of points in $\mathcal{I}$ and that $|g'(x\pm)|<\infty$ and $|g''(x\pm)|<\infty$ for all $x\in \mathcal{P}$.

Let $\tau_{z,y}=\inf\{t\geq0: X_t\notin (z,y)\}$ denote the first exit time of $X$ from the open set $(z,y)\subset \mathcal{I}$ with compact closure
in $\mathcal{I}$ and denote by $$V_{z,y}(x):=\E_x\left[e^{-r\tau_{z,y}}g(X_{\tau_{z,y}});\tau_{z,y}<\infty\right]$$ the expected present value of the exercise payoff accrued
from following that stopping strategy. It is well known that in that case
$V$ can be rewritten as (cf. \cite{Lempa10})
\begin{align}\label{eq Vzy}
V_{z,y}(x)=\begin{cases}
g(x)\quad &x\in(a,z]\cup[y,b)\\
\frac{\hat{\varphi}_y(x)}{\hat{\varphi}_y(z)}g(z)+\frac{\hat{\psi}_z(x)}{\hat{\psi}_z(y)}g(y)\quad & x\in(z,y),
\end{cases}
\end{align}
where $\hat{\varphi}_y(x)=\varphi(x)\psi(y)-\varphi(y)\psi(x)$ denotes the decreasing and $\hat{\psi}_z(x)=\psi(x)\varphi(z)-\psi(z)\varphi(x)$ the increasing
fundamental solution of the ordinary differential equation $(\mathcal{G}_ru)(x)=0$ defined with respect to the killed diffusion $\{X_t; t\in [0,\tau_{z,y})\}$.
Within this two-boundary setting our identification problem can be stated as follows:
\begin{problem}\label{repre2bound}
For a given pair $z,y\in g^{-1}(\mathbb{R}_+)$ satisfying the condition $a<z<y<b$, is there a function
$f(x)=f_1(x)\mathbbm{1}_{(a,z]}(x)+f_2(x)\mathbbm{1}_{[y,b)}(x)$, where $f_1(x)$ is nonincreasing and $f_2(x)$ is nondecreasing
such that for all $x\in\mathcal{I}$ we would have
\begin{align}
J_{(z,y)}(x):=\E_x\left[f_1(I_T)\mathbbm{1}_{(a,z]}(I_T)\lor f_2(M_T)\mathbbm{1}_{[y,b)}(M_T)\right]=V_{z,y}(x),\label{decomp}
\end{align}
where $T\sim \Exp (r)$ is independent of the underlying $X$.
\end{problem}
It is at this point worth pointing out that if $f_1(z-)\wedge f_2(y+)\geq 0$, then we clearly have
$$
\sup\{f(X_t); t\leq T\} = f_1(I_T)\mathbbm{1}_{(a,z]}(I_T)\lor f_2(M_T)\mathbbm{1}_{[y,b)}(M_T)
$$
Consequently, Problem \ref{repre2bound} essentially asks if there exists a function such that the expected present value
of the payoff accrued at the first exit time from an open interval can be expressed as as an expected supremum of that particular function or not.
Especially, if the inequality $f_1(z-)\wedge f_2(y+)\geq 0$ is satisfied, then we find by applying Jensen's inequality that
\begin{align}
\E_x\left[\sup\{f(X_t);t\leq T\}\right]\geq
\left(\psi(x)\int_{x\lor y}^bf_2(t)\frac{\psi'(t)}{\psi^2(t)}dt\right) \lor \left(-\varphi(x)\int_a^{x\land z}f_1(t)\frac{\varphi'(t)}{\varphi^2(t)}dt\right).\label{Jensen}
\end{align}
Therefore, whenever $V_{z,y}(x)$ can be expressed as an expected supremum, it has to dominate the lower bound \eqref{Jensen}.

On the other hand, the function $f$ stated in Problem \ref{repre2bound} has an additive form. One could, thus, be tempted to search for a similar additive
representation of the supremum. Unfortunately, such an approach is not possible since the assumed monotonicity of the functions $f_1$ and $f_2$ implies that
\begin{align*}
\sup\{f(X_t); t\leq T\}&\leq \sup\{f_1(X_t)\mathbbm{1}_{(a,z]}(X_t); t\leq T\} + \sup\{f_2(X_t)\mathbbm{1}_{[y,b)}(X_t); t\leq T\} \\
&\leq   \sup\{f_1^+(X_t)\mathbbm{1}_{(a,z]}(X_t); t\leq T\} + \sup\{f_2^+(X_t)\mathbbm{1}_{[y,b)}(X_t); t\leq T\}\\
& =
f_1^+(I_T)\mathbbm{1}_{(a,z]}(I_T) + f_2^+(M_T)\mathbbm{1}_{[y,b)}(M_T).
\end{align*}
Thus, if the inequality $f_1(z-)\wedge f_2(y+)\geq 0$ is met, we observe that
$$
\sup\{f(X_t); t\leq T\} \leq f_1(I_T)\mathbbm{1}_{(a,z]}(I_T) + f_2(M_T)\mathbbm{1}_{[y,b)}(M_T)
$$
and, therefore, that
\begin{align}
\E_x\left[\sup\{f(X_t);t\leq T\}\right]\leq \psi(x)\int_{x\lor y}^bf_2(t)\frac{\psi'(t)}{\psi^2(t)}dt -\varphi(x)\int_a^{x\land z}f_1(t)\frac{\varphi'(t)}{\varphi^2(t)}dt.\label{ineq}
\end{align}
Based on these findings, we can establish the following.
\begin{lemma}\label{lemma J}
Assume that $f_1(z-)\lor f_2(y+)\geq0$. Then $J_{(z,y)}(x)$ is $r$-excessive for the underlying diffusion $X$. Moreover, if $f_1(z-)\wedge f_2(y+)\geq 0$, then
$J_{(z,y)}(x) = \E_x\left[\sup\{f(X_t);t\leq T\}\right]$ satisfies inequality \eqref{ineq}
for all $x\in \mathcal{I}$.
\end{lemma}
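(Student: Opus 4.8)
The plan is to treat the two assertions separately, realizing the integrand of $J_{(z,y)}$ as a genuine path supremum of a nonnegative function so that the first (excessivity) claim follows from the Föllmer--Knispel representation, while the second claim is essentially the assembly of the two displayed estimates preceding the lemma. For the excessivity, I would introduce
\[
\hat g(x)=f_1^+(x)\mathbbm{1}_{(a,z]}(x)+f_2^+(x)\mathbbm{1}_{[y,b)}(x),
\]
which is nonnegative, vanishes on $(z,y)$, is nonincreasing on $(a,z]$ and nondecreasing on $[y,b)$; after passing to its upper semicontinuous modification (left-continuous on $(a,z]$, right-continuous on $[y,b)$, differing only on the countable jump set) the function $\hat g$ is upper semicontinuous. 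Using the continuity of $X$, so that along a path the values attained in $(a,z]$ fill $[I_T,z]$ and those attained in $[y,b)$ fill $[y,M_T]$, together with the monotonicity of $f_1^+$ and $f_2^+$, I would show
\[
\sup_{0\le t\le T}\hat g(X_t)=\bigl[f_1^+(I_T)\mathbbm{1}_{(a,z]}(I_T)\bigr]\lor\bigl[f_2^+(M_T)\mathbbm{1}_{[y,b)}(M_T)\bigr],
\]
the intermediate band $(z,y)$ contributing only the value $0$.

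The next step removes the positive parts. Writing $A=f_1(I_T)\mathbbm{1}_{(a,z]}(I_T)$ and $B=f_2(M_T)\mathbbm{1}_{[y,b)}(M_T)$, one has $f_1^+(I_T)\mathbbm{1}_{(a,z]}(I_T)=A\lor0$ and $f_2^+(M_T)\mathbbm{1}_{[y,b)}(M_T)=B\lor0$, so the right-hand side above equals $(A\lor B)\lor0$. Under $f_1(z-)\lor f_2(y+)\ge0$ at least one of $A,B$ is nonnegative almost surely: if, say, $f_2(y+)\ge0$, then monotonicity of $f_2$ forces $f_2(M_T)\ge0$ on $\{M_T\ge y\}$, hence $B\ge0$ and $A\lor B\ge0$. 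Consequently $(A\lor B)\lor0=A\lor B$, i.e.
\[
\sup_{0\le t\le T}\hat g(X_t)=f_1(I_T)\mathbbm{1}_{(a,z]}(I_T)\lor f_2(M_T)\mathbbm{1}_{[y,b)}(M_T).
\]
Taking $\E_x$ and recalling \eqref{decomp} yields $J_{(z,y)}(x)=\E_x\bigl[\sup_{0\le t\le T}\hat g(X_t)\bigr]$, and since $\hat g$ is nonnegative and upper semicontinuous, Proposition 2.1 in \cite{FoKn1} (equivalently Lemma 2.2 in \cite{ChSaTa}) gives that $J_{(z,y)}$ is $r$-excessive.

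For the second claim, under the stronger hypothesis $f_1(z-)\wedge f_2(y+)\ge0$ the identity $\sup\{f(X_t);t\le T\}=f_1(I_T)\mathbbm{1}_{(a,z]}(I_T)\lor f_2(M_T)\mathbbm{1}_{[y,b)}(M_T)$ recorded just before the lemma holds; taking $\E_x$ of both sides and comparing with \eqref{decomp} gives $J_{(z,y)}(x)=\E_x[\sup\{f(X_t);t\le T\}]$. Inequality \eqref{ineq} is then exactly the bound assembled earlier: starting from $\sup\{f(X_t);t\le T\}\le f_1(I_T)\mathbbm{1}_{(a,z]}(I_T)+f_2(M_T)\mathbbm{1}_{[y,b)}(M_T)$ I would take expectations and evaluate the two terms through the marginal laws of $M_T$ and $I_T$ from Lemma \ref{probabilitydist}, whose densities are $\psi(x)\psi'(m)/\psi^2(m)$ and $-\varphi(x)\varphi'(i)/\varphi^2(i)$, producing the two integrals on the right of \eqref{ineq}.

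The only genuinely non-routine point is the supremum identity of the first two paragraphs: one must argue carefully that path continuity lets the running extrema $I_T$ and $M_T$ realize the suprema of the monotone pieces and that the band $(z,y)$ contributes precisely $0$. The accompanying measure-zero and semicontinuity technicalities, namely possible atoms of $M_T,I_T$ at $y,z$ and the value of $\hat g$ at its jump points, are disposed of by passing to the upper semicontinuous modification of $\hat g$, after which the remaining distributional computations reduce to the marginals already established in Lemma \ref{probabilitydist}.
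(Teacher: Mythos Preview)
Your proof is correct and follows the same route as the paper. The paper's proof is a three-line sketch: it observes that $f_1(x)\mathbbm{1}_{(a,z]}(x)\lor f_2(x)\mathbbm{1}_{[y,b)}(x)$ is nonnegative and upper semicontinuous, invokes Proposition~2.1 of \cite{FoKn1}, and declares the second claim ``proven in the text.'' Your auxiliary function $\hat g=f_1^+\mathbbm{1}_{(a,z]}+f_2^+\mathbbm{1}_{[y,b)}$ is literally the same function (the supports are disjoint), and you simply make explicit what the paper leaves implicit: the path-supremum identity $\sup_{t\le T}\hat g(X_t)=(A\lor B)\lor0$, the removal of the $\lor\,0$ via the hypothesis $f_1(z-)\lor f_2(y+)\ge0$, and the upper-semicontinuity fix at the jump points of $f_1,f_2$.
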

\begin{proof}
We first observe that if $f_1(z-)\lor f_2(y+)\geq0$, then $f_1(x)\mathbbm{1}_{(a,z]}(x)\lor f_2(x)\mathbbm{1}_{[y,b)}(x)$ is nonnegative and upper semicontinuous
for all $x\in \mathcal{I}$. Proposition 2.1 in \cite{FoKn1} then implies that $J_{(z,y)}(x)$
is $r$-excessive for $X$. The second claim was proven in the text.
\end{proof}
Lemma \ref{lemma J} states a set of easily verifiable conditions characterizing circumstances under which the proposed representation is $r$-excessive for the underlying $X$.
It is, however, worth noticing that Lemma \ref{lemma J} does not make statements on the relationship between the values $J_{(z,y)}(x)$ and
$V_{z,y}(x)$. Thus, characterizing the expected value $J_{(z,y)}(x)$ without an explicit characterization of the functions $f_1$ and $f_2$ is not possible
and more analysis is needed. It is also worth emphasizing that Lemma \ref{lemma J} shows that if the auxiliary functions $f_1$ and $f_2$ are nonnegative on $\mathcal{I}$, then the expected supremum
$J_{(z,y)}(x)$ is bounded from above by a functional form which, in principle, could be  computed explicitly provided that the functions $f_1$ and $f_2$ were known.

By reordering terms, the value \eqref{eq Vzy} can also be expressed as
\begin{equation}\label{eq A}
V_{z,y}(x)=A_1(z,y)\varphi(x)+A_2(z,y)\psi(x),
\end{equation}
where
$$
A_1(z,y)=\frac{\psi(y)g(z)-g(y)\psi(z)}{\psi(y)\varphi(z)-\psi(z)\varphi(y)}
$$
and
$$
A_2(z,y)=\frac{\varphi(z)g(y)-g(z)\varphi(y)}{\psi(y)\varphi(z)-\psi(z)\varphi(y)}.
$$
Hence, if the exercise payoff is differentiable at the thresholds $z$ and $y$, then
\begin{align}
\frac{1}{\psi(z)}\frac{\partial A_1}{\partial y}(z,y)=-\frac{1}{\varphi(z)}\frac{\partial A_2}{\partial y}(z,y)=
\frac{S'(y)}{\hat{\psi}_z^2(y)}\left[g(y)\frac{\hat{\psi}_z'(y)}{S'(y)}-\hat{\psi}_z(y)\frac{g'(y)}{S'(y)}-Bg(z)\right]\label{nc1}
\end{align}
and
\begin{align}
\frac{1}{\psi(y)}\frac{\partial A_1}{\partial z}(z,y)=-\frac{1}{\varphi(y)}\frac{\partial A_2}{\partial z}(z,y)=
\frac{S'(z)}{\hat{\psi}_z^2(y)}\left[\hat{\varphi}_y(z)\frac{g'(z)}{S'(z)}-g(z)\frac{\hat{\varphi}_y'(z)}{S'(z)}-Bg(y)\right].\label{nc2}
\end{align}
We will apply these results later when deriving the auxiliary mappings needed for the representation of the value as an expected supremum.
Before proceeding in our analysis, we first state the following auxiliary lemma:
\begin{lemma}\label{Martin2}
Assume that the following conditions are satisfied:
\begin{itemize}
  \item[(i)] there exists a unique pair $(z^\ast,y^\ast)$ satisfying the inequality $a<z^\ast < y^\ast<b$ such that $V_{z^\ast,y^\ast}(x)=\sup_{z,y\in \mathcal{I}}V_{z,y}(x)$,
  \item[(ii)] $\lim_{x\rightarrow a+}(g'(x)\psi(x)-g(x)\psi'(x))\leq0$ and $\lim_{x\rightarrow b-}(g'(x)\varphi(x)-g(x)\varphi'(x))\geq0$,
  \item[(iii)] $(\mathcal{G}_rg)(x)\leq 0$ for all $x\in ((a,z^\ast]\cup[y^\ast,b))\setminus \mathcal{P}$, and
  \item[(iv)] $g'(x+)\leq g'(x-)$ for all $x\in((a,z^\ast]\cup[y^\ast,b))\cap \mathcal{P}$.
\end{itemize}
Then, $V(x)=V_{z^\ast,y^\ast}(x)$ and $\tau_{z^\ast,y^\ast}=\inf\{t\geq 0: X_t\not\in(z^\ast, y^\ast)\}$ is an optimal stopping time.
\end{lemma}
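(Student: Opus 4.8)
The plan is to follow the proof of Lemma~\ref{Martin1} in spirit and show that $V_{z^\ast,y^\ast}$ is an $r$-excessive majorant of $g$. Once this is established the theorem follows at once: on the one hand $\tau_{z^\ast,y^\ast}$ is an admissible stopping rule, so $V_{z^\ast,y^\ast}\leq V$; on the other hand $V$ is the least $r$-excessive majorant of $g$, so $V\leq V_{z^\ast,y^\ast}$. Hence everything reduces to proving (a) $V_{z^\ast,y^\ast}(x)\geq g(x)$ on $\mathcal{I}$ and (b) $V_{z^\ast,y^\ast}$ is $r$-excessive.

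First I would dispose of the majorization (a) and the nonnegativity of $V_{z^\ast,y^\ast}$, both of which follow directly from the optimality hypothesis (i). Fixing $x\in(z^\ast,y^\ast)$ and comparing with the competing pair $(x,y)$, $y\in(x,b)$, which puts $x$ on the stopping boundary so that $V_{x,y}(x)=g(x)$, gives $V_{z^\ast,y^\ast}(x)=\sup_{z,y\in\mathcal{I}}V_{z,y}(x)\geq g(x)$; on the complement $(a,z^\ast]\cup[y^\ast,b)$ we have $V_{z^\ast,y^\ast}=g$ by \eqref{eq Vzy}, so (a) holds on all of $\mathcal{I}$. The same comparison, together with the fact that the middle branch of \eqref{eq Vzy} is a combination of $g(z),g(y)\geq0$ (recall $z,y\in g^{-1}(\mathbb{R}_+)$) with nonnegative coefficients, yields $V_{z^\ast,y^\ast}\geq0$; continuity is immediate from \eqref{eq Vzy}, the two branches agreeing at $z^\ast$ and $y^\ast$.

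The substance of the argument is (b), which I would obtain from the Martin representation exactly as in Lemma~\ref{Martin1}, the difference being that the representing measure now charges both ends of the state space. Fix a reference point $x_0\in(z^\ast,y^\ast)\setminus\mathcal{P}$, where $V_{z^\ast,y^\ast}$ is a strictly positive, $r$-harmonic combination of $\psi$ and $\varphi$, and put $h_{x_0}=V_{z^\ast,y^\ast}/V_{z^\ast,y^\ast}(x_0)$. The associated representing measure satisfies $\sigma^{h_{x_0}}_{x_0}((x,b])\propto-(L_\varphi V_{z^\ast,y^\ast})(x+)$ for $x\geq x_0$ and $\sigma^{h_{x_0}}_{x_0}([a,x))\propto(L_\psi V_{z^\ast,y^\ast})(x-)$ for $x\leq x_0$, with positive constants as in the one-sided case. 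On $(z^\ast,y^\ast)$ both functionals are constant by \eqref{linearityGenerator}, because $(\mathcal{G}_rV_{z^\ast,y^\ast})\equiv0$ there, so no interior mass is produced. On the upper stopping set $[y^\ast,b)$ identity \eqref{linearityGenerator} gives $(L_\varphi V_{z^\ast,y^\ast})'(x)=-(\mathcal{G}_rg)(x)\varphi(x)m'(x)\geq0$ by (iii), so $-(L_\varphi V_{z^\ast,y^\ast})$ is nonincreasing; condition (iv) fixes the sign of the jumps at the points of $\mathcal{P}$, and the limit $\lim_{x\to b-}(g'\varphi-g\varphi')\geq0$ in (ii) forces $-(L_\varphi V_{z^\ast,y^\ast})(b-)\geq0$, whence monotonicity propagates nonnegativity back over the whole half-line. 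A mirror-image computation on $(a,z^\ast]$, using $(\mathcal{G}_rg)\leq0$, the jump condition (iv), and the limit $\lim_{x\to a+}(g'\psi-g\psi')\leq0$ in (ii), shows that $(L_\psi V_{z^\ast,y^\ast})(x-)$ is nonnegative and nondecreasing. The matching (smooth fit) of the one-sided functionals across $z^\ast$ and $y^\ast$ themselves is guaranteed by the first-order optimality conditions \eqref{nc1}--\eqref{nc2} where $g$ is differentiable, and by (iv) otherwise, so no spurious atoms arise at the boundaries of the continuation region.

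Consequently $\sigma^{h_{x_0}}_{x_0}$ is a nonnegative measure, and imposing $\sigma^{h_{x_0}}_{x_0}(\{x_0\})=0$ the normalization $\sigma^{h_{x_0}}_{x_0}([a,x_0))+\sigma^{h_{x_0}}_{x_0}((x_0,b])=1$ makes it a probability measure; Proposition~3.3 in \cite{Salminen1985} then shows that $h_{x_0}$, and hence $V_{z^\ast,y^\ast}$, is $r$-excessive. Combining (a) and (b) gives $V=V_{z^\ast,y^\ast}$ and the optimality of $\tau_{z^\ast,y^\ast}$. I expect the main obstacle to be the two-sided bookkeeping in step (b): one must track the two one-sided functionals $L_\psi$ and $L_\varphi$ at once, check that the harmonic middle piece contributes no mass and no boundary atoms, and verify that the two endpoint inequalities in (ii) are exactly what converts the local monotonicity furnished by (iii)--(iv) into global nonnegativity of the representing measure.
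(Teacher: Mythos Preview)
Your proposal is correct and follows essentially the same approach as the paper: establish that $V_{z^\ast,y^\ast}$ is a nonnegative, continuous majorant of $g$, then verify $r$-excessivity via the Martin representation by checking that $(L_\psi V_{z^\ast,y^\ast})$ and $(L_\varphi V_{z^\ast,y^\ast})$ have the right monotonicity and signs (using (iii) for the derivative, (iv) for the jumps at $\mathcal{P}$, and (ii) for the boundary values). The only cosmetic difference is your choice of reference point $x_0\in(z^\ast,y^\ast)$ in the continuation region, whereas the paper takes $x_0\in(y^\ast,b)\setminus\mathcal{P}$; both choices work for the Martin representation and the normalization identity, so this is immaterial.
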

\begin{proof}
It is clear that under our assumptions $V_{z^\ast,y^\ast}(x)$ is nonnegative, continuous, and  dominates the exercise payoff $g(x)$ for all $x\in \mathcal{I}$.
Consider now the behavior of the mappings $(L_\psi V_{z^\ast,y^\ast})(x)$ and $(L_\varphi V_{z^\ast,y^\ast})(x)$. It is clear from \eqref{linearityGenerator} that
$(L_\psi V_{z^\ast,y^\ast})'(x)=(L_\varphi V_{z^\ast,y^\ast})'(x)=0$ for all $x\in(z^\ast,y^\ast)$ and $(L_\psi V_{z^\ast,y^\ast})'(x)=-\psi(x)(\mathcal{G}_rg)(x)m'(x) \geq 0,
(L_\varphi V_{z^\ast,y^\ast})'(x)=-\varphi(x)(\mathcal{G}_rg)(x)m'(x)\geq0$ for all $x\in((a,z^\ast)\cup (y^\ast,b))\setminus \mathcal{P}$. However, since
\begin{align*}
(L_u V_{z^\ast,y^\ast})(x-)-(L_u V_{z^\ast,y^\ast})(x+) = u(x)\frac{g'(x+)-g'(x-)}{S'(x)}\leq 0
\end{align*}
for all $x\in((a,z^\ast]\cup [y^\ast,b))\cap \mathcal{P}$ when $u=\psi$ or $u=\varphi$
we find that $(L_\psi V_{z^\ast,y^\ast})(x)$ and $(L_\varphi V_{z^\ast,y^\ast})(x)$ are nondecreasing on $\mathcal{I}$. Combining these observations with assumption (ii) then proves that
$(L_\psi V_{z^\ast,y^\ast})(x)\geq 0$ and $(L_\varphi V_{z^\ast,y^\ast})(x)\leq 0$ for all $x\in \mathcal{I}$.

Let $x_0\in(y^\ast,b)\setminus\mathcal{P}$ be a fixed reference point and define the ratio $h_{x_0}(x)=V_{z^\ast,y^\ast}(x)/V_{z^\ast,y^\ast}(x_0)=V_{z^\ast,y^\ast}(x)/g(x_0)$.
It is clear that our assumptions combined with identity \eqref{linearityGenerator} guarantee that
$$
\sigma_{x_0}^{h_{x_0}}((x,b])=
-\frac{\psi(x_0)}{Bg(x_0)}(L_\varphi g)(x+)
$$
is nonnegative and nonincreasing for all $x\geq x_0$ and $\sigma_{x_0}^{h_{x_0}}((x_0,b])=-\frac{\psi(x_0)}{Bg(x_0)}(L_\varphi g)(x_0)$. Analogously,
$$
\sigma_{x_0}^{h_{x_0}}([a,x))=
\frac{\varphi(x_0)}{Bg(x_0)}\left[(L_\psi g)(x-)\mathbbm{1}_{(a,z^\ast]\cup[y^\ast,x_0]}(x)+(L_\psi V_{z^\ast,y^\ast})(z^\ast-)\mathbbm{1}_{(z^\ast,y^\ast)}(x)\right]
$$
is nonnegative and nondecreasing for all $x\leq x_0$ and satisfies $\sigma_{x_0}^{h_{x_0}}([a,x_0))=\frac{\varphi(x_0)}{Bg(x_0)}(L_\psi g)(x_0)$. The identity $V(x)=V_{z^\ast,y^\ast}(x)$  and
optimality of the stopping time $\tau_{z^\ast,y^\ast}=\inf\{t\geq 0: X_t\not\in(z^\ast, y^\ast)\}$ results follow by utilizing analogous arguments with Lemma \ref{Martin1}.
\end{proof}

Lemma \ref{Martin2} states a set of sufficient conditions under which the considered stopping problem constitutes a two boundary problem where the
underlying diffusion is stopped as soon as it exits from the continuation region characterized by an open interval in the state space $\mathcal{I}$.
As in the case of Lemma \ref{Martin1} no differentiability at the stopping boundaries is required nor do we impose conditions on the monotonicity of
the generator $(\mathcal{G}_rg)(x)$ on $\mathcal{I}$. An interesting implication of the results of Lemma \ref{Martin2} is that at the optimal exercise
boundaries we have $V_{z^\ast,y^\ast}'(z^\ast-)\geq V_{z^\ast,y^\ast}'(z^\ast+)$ and $V_{z^\ast,y^\ast}'(y^\ast-)\geq V_{z^\ast,y^\ast}'(y^\ast+)$
where the inequalities may be strict in case the smooth fit principle is not satisfied. As we will observe later in this section in our explicit
numerical illustrations of our principal findings, it is precisely the non-differentiability of the value at the exercise threshold which may result
in situations where the function needed for the representation of the value as an expected supremum is discontinuous. Moreover, as in the single boundary setting, the potential non-monotonicity of the
generator on the stopping set may result in situations where the value of the optimal policy cannot be represented as an expected supremum.

\begin{remark}\label{sufficient2bound}
Assume that the following conditions are met:
\begin{itemize}
  \item[(i)] $(\mathcal{G}_rg)(x)\leq 0$ for all $x\in ((a,\tilde{x}_1)\cup(\tilde{x}_2,b))\setminus \mathcal{P}$, where $a<\tilde{x}_1<\tilde{x}_2<b$.
\item[(ii)] the mappings
$(L_\psi g)(x)$ and $(L_\varphi g)(x)$ are nondecreasing on $(a,\tilde{x}_1]\cup[\tilde{x}_2,b)$ and satisfy the limiting conditions $\lim_{x\downarrow a}(L_\psi g)(x)\geq 0$,
$\lim_{x\uparrow b}(L_\varphi g)(x)\leq 0$, $\lim_{x\uparrow b}(L_\psi g)(x)=\infty$, and $\lim_{x\downarrow a}(L_\varphi g)(x)=-\infty$.
\end{itemize}
Then, it can be shown by relying on the fixed point technique developed in \cite{Lempa10} and \cite{Ma12} that
there exists a candidate pair $z^\ast,y^\ast\in(a,\tilde{x}_1]\cup[\tilde{x}_2,b)$ maximizing $V_{z,y}(x)$ and resulting
in a $r$-excessive function $V_{z^\ast,y^\ast}(x)$. Especially, if $\mathcal{P}\subset(\tilde{x}_1,\tilde{x}_2)$, then
$z^\ast,y^\ast\in(a,\tilde{x}_1]\cup[\tilde{x}_2,b)$ constitutes the unique pair maximizing $V_{z,y}(x)$ and
$V(x)=V_{z^\ast,y^\ast}(x)$.
\end{remark}

In order to characterize the functions $f_1$ and $f_2$ and determine $J_{(z,y)}(x)$  explicitly, we first need to make some further assumptions.
\begin{assumption}\label{as 2}
We assume that either (a) $f_2(b)= f_1(a)$, or
(b) $f_2(b)>f_1(a)$,  $g(a)<\infty$, and $\lim_{x\uparrow b}g(x)/\psi(x)=0$.
\end{assumption}
It is at this point worthwhile to stress that a proof for the case "$f_2(b)<f_1(a)$, $g(b)<\infty$, and $\lim_{x\downarrow a}g(x)/\varphi(x)=0$" is completely analogous
with the proof in case (b) of Assumption \eqref{as 2}. Given these assumptions, define the state $\zeta:= f_2^{-1}(f_1(a+))$ and the functions $\alpha:[y,b)\mapsto(a,z]$
and $\beta:(a,z]\mapsto [y,\zeta)$ as (see Figure \ref{fig 2case})
\begin{align*}
\alpha(m)&:=f_1^{-1}(f_2(m))\\
\beta(i)&:=f_2^{-1}(f_1(i)).
\end{align*}
If these points do not exist, we interpret them by the generalized inverses:
\begin{align*}
f_2^{-1}(x)&=\inf\left\{m\in[y,b]\mid f_2(m)\geq x\right\}\\
f_1^{-1}(x)&=\sup\left\{i\in[a,z]\mid f_1(i)\geq x\right\}.
\end{align*}
Especially, we set $\alpha(m)=a$ for all $m\geq\zeta$ and notice that $\beta(i)\in[y,b)$ constitutes the point in the domain of $f_2$ for
which the indifference condition $f_1(i)=f_2(\beta(i))$ holds, whenever $f_1$ and $f_2$ are continuous at the points $i$ and $\beta(i)$, respectively.
Similarly, $\alpha(m)\in(a,z]$ constitutes a point in the domain of $f_1$ for which identity $f_1(\alpha(m))=f_2(m)$ holds, whenever $f_1$ and $f_2$
are continuous at $\alpha(m)$ and $m$, respectively. In order to ease the notations in the sequel, we shall denote these functions simply by $\alpha$
and $\beta$ omitting the variables $i$ and $m$ from the notation.
\begin{figure}[!ht]
\begin{center}
\includegraphics[width=0.5\textwidth]{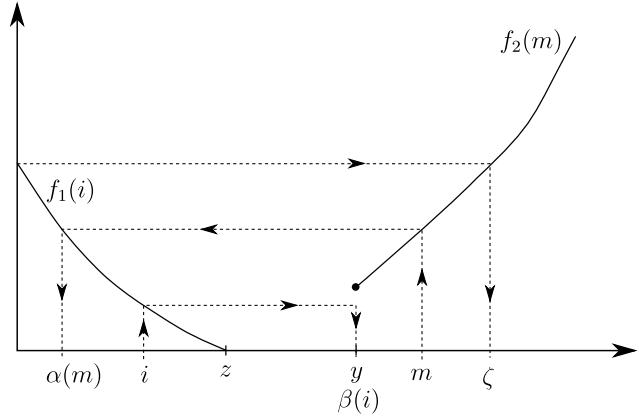}
\caption{\small Illustrating $f_1$, $f_2$, $\alpha$, $\beta$ and $\zeta$.}\label{fig 2case}
\end{center}
\end{figure}

\subsection{Calculating the expectation}
Utilizing the joint probability distribution \eqref{jointprob} described in Lemma \ref{probabilitydist} shows that
\begin{align}\label{eq px}
\begin{aligned}
\p_x(I_T\in di,M_T\leq \beta(i))&=\frac{-BS'(i)-\hat{\varphi}_\beta'(i)}{\hat{\varphi}_\beta^2(i)}\hat{\varphi}_\beta(x)di\\
\p_x(I_T\geq \alpha(m),M_T\in dm)&=\frac{-BS'(m)+\hat{\psi}_\alpha'(m)}{\hat{\psi}_\alpha^2(m)}\hat{\psi}_\alpha(x)dm\\
\end{aligned}
\end{align}
Given these densities, we notice that $J_{(z,y)}(x)$ can be rewritten as
\begin{align*}
\begin{aligned}
J_{(z,y)}(x)=\left\{
{\renewcommand{\arraystretch}{1.5}
\begin{array}{l@{\quad}l}
\D\int_a^xf_1(i) \p_x\left(I_T\in di,M_T < \beta(i)\right) +\int_{\beta(x)}^b f_2(m) \p_x\left(I_T>\alpha(m),M_T\in dm\right)&x\leq z\\
\D\int_a^zf_1(i) \p_x\left(I_T\in di,M_T < \beta(i)\right) +\int_y^b f_2(m) \p_x\left(I_T>\alpha(m),M_T\in dm\right)&x \in(z,y)\\
\D\int_a^{\alpha(x)}f_1(i) \p_x\left(I_T\in di,M_T < \beta(i)\right) +\int_{x}^b f_2(m) \p_x\left(I_T>\alpha(m),M_T\in dm\right)&x\geq y.
\end{array}}\right.
\end{aligned}
\end{align*}
Since our objective is to delineate circumstances under which $J_{(z,y)}(x)=V_{(z,y)}(x)$ holds especially for $x\in(z,y)$, we can first determine for which $f_1$ the equality
\begin{align*}
\lim_{x\mapsto z+}\frac{\partial}{\partial z}J_{(z,y)}(x)=\lim_{x\mapsto z+}\frac{\partial}{\partial z}V_{(z,y)}(x)
\end{align*}
holds. We can then make an {\em ansatz} that the solution of this identity constitutes the required function $f_1$. In a completely analogous fashion,
by differentiating $V_{(z,y)}$ with respect to $y$ and setting $x\mapsto y-$, we can make a second {\em ansatz}  that the solution of the resulting identity constitutes the required  $f_2$.
More precisely, we propose that the functions $f_1$ and $f_2$ should be of the form
\begin{align}\label{eq f12}
\begin{aligned}
f_1(i)&:=\frac{-g(\beta(i))BS'(i)+g'(i)\hat{\varphi}_{\beta(i)}(i)-g(i)\hat{\varphi}_{\beta(i)}'(i)}{-BS'(i)-\hat{\varphi}_{\beta(i)}'(i)}\\
f_2(m)&:=\frac{g(\alpha(m))BS'(m)+g'(m)\hat{\psi}_{\alpha(m)}(m)-g(m)\hat{\psi}_{\alpha(m)}'(m)}{BS'(m)-\hat{\psi}_{\alpha(m)}'(m)}.
\end{aligned}
\end{align}

\subsection{Verifying our ansatz}

Our objective is now to delineate circumstances under which our ansatz can be shown to be correct. To this end, at this point we assume that the problem specification is
such that  $f_1$ is non-increasing and $f_2$ is non-decreasing, otherwise the functions $\alpha$ and $\beta$ would not be unambiguously defined.
Later on, we shall state a set of sufficient conditions under which these monotonicity requirements indeed hold. In order to facilitate the explicit computation of the
functions $f_1$ and $f_2$, we assume in what follows that the boundaries $a$ and $b$ are natural for the underlying diffusion $X$.

Let us now compute $f_2(m)$ for $m\in[\zeta,b)\setminus \mathcal{P}$. We can rewrite $f_2$ as
\begin{align*}
f_2(m)=\frac{g(\alpha)BS'(m)\frac{1}{\varphi(\alpha)}+g'(m)\tilde{\psi}_{\alpha}(m)-
g(m)\tilde{\psi}_{\alpha}'(m)}{BS'(m)\frac{1}{\varphi(\alpha)}-\tilde{\psi}_{\alpha}'(m)},
\end{align*}
where $\tilde{\psi}_i(m)=\psi(m)-\frac{\psi(i)}{\varphi(i)}\varphi(m)$. Clearly, $\lim_{i\downarrow a}\tilde{\psi}_i(m)=\psi(m)$ and $\lim_{i\downarrow a}\tilde{\psi}_i'(m)=\psi'(m)$.
Moreover, since $a$ was assumed to be natural, and we interpreted $\alpha(m)=a$ for all $m\geq \zeta$, we get, for $m\in[\zeta,b)\setminus \mathcal{P}$, that
\begin{align}\label{eq f2zeta}
f_2(m)=g(m)-\frac{g'(m)}{\psi'(m)}\psi(m).
\end{align}
Similarly, applying  \eqref{eq px} shows that for all $m\geq\zeta$ it holds
\begin{align*}
\p_x(I_T\geq \alpha(m),M_T\in dm)&=\p_x(I_T\geq a,M_T\in dm)=\frac{\psi'(m)}{\psi^2(m)}\psi(x).
\end{align*}
We observe that these are, in fact, the very same functionals we got in Section \ref{sec 1sided} with the increasing one-sided case.
In order to verify our ansatz, let $x\in(z,y)$, and substitute $f_1$ and $f_2$ from \eqref{eq f12} and $\p_x$'s from \eqref{eq px} to $J_{(z,y)}(x)$. After reordering terms, we get
\begin{align*}
J_{(z,y)}(x)=&\varphi(x)\int_a^z\psi(\beta)\frac{-g(\beta) BS'(i)+g'(i)\hat{\varphi}_\beta(i)-g(i)\hat{\varphi}_\beta'(i)}{\hat{\varphi}_\beta^2(i)}di\\
+&\varphi(x)\int_y^\zeta \psi(\alpha)\frac{g(\alpha) BS'(m)+g'(m)\hat{\psi}_\alpha(m)-g(m)\hat{\psi}_\alpha'(m)}{\hat{\psi}_\alpha^2(m)}dm\\
-&\psi(x)\int_a^z\varphi(\beta)\frac{-g(\beta) BS'(i)+g'(i)\hat{\varphi}_\beta(i)-g(i)\hat{\varphi}_\beta'(i)}{\hat{\varphi}_\beta^2(i)}di\\
-
&\psi(x)\int_y^\zeta \varphi(\alpha)\frac{g(\alpha) BS'(m)+g'(m)\hat{\psi}_\alpha(m)-g(m)\hat{\psi}_\alpha'(m)}{\hat{\psi}_\alpha^2(m)}dm\\
+& \psi(x)\int_\zeta^b\frac{g(m)\psi'(m)-g'(m)\psi(m)}{\psi^2(m)}dm.
\end{align*}
Similar to one-sided case (Section \ref{sec 1sided}), we notice that the last integral $\int_\zeta^b()dm$ equals $g(\zeta)/\psi(\zeta)$. (Notice that it follows from our
assumptions that if $\zeta=b$, then $g(\zeta)/\psi(\zeta)=0$.)

Next let us make a change in variable in the integrals $\int_y^\zeta()dm$: Substitute $i:=\alpha(m)$ (or $m=\beta(i)$), so that $dm=\beta'(i)di$ and the boundaries change
as $y\mapsto \alpha(y)=:\hat{z}\leq z$ and $\zeta\mapsto a$. We notice that we can actually change the lower boundary as $y\mapsto z$, since for all $i\in(\hat{z},z)$ we
have $\beta'(i)=0$, showing that the integrand between $\hat{z}$ and $z$ equals zero. Doing this and reordering terms show that $J_{(z,y)}(x)$ can now be written as
\begin{align*}
J_{(z,y)}(x)&=\varphi(x)\int_a^z\frac{dA_1(i,\beta(i))}{di}di+\psi(x)\int_a^z\frac{dA_2(i,\beta(i))}{di}di+\psi(x)\frac{g(\zeta)}{\psi(\zeta)}\\
&=\varphi(x)\left(A_1(z,y)-A_1(a,\zeta)\right)+\psi(x)\left(A_2(z,y)-A_2(a,\zeta)\right)+\psi(x)\frac{g(\zeta)}{\psi(\zeta)}.
\end{align*}
Finally, since $a$ was assumed to be a natural boundary for $X$, we obtain that $A_1(a,\zeta)=0$ and $A_2(a,\zeta)=g(\zeta)/\psi(\zeta)$. Consequently,
$J_{(z,y)}(x)=A_1(z,y)\varphi(x)+A_2(z,y)\psi(x)=V_{(z,y)}(x)$ for $x\in(z,y)$ as claimed.

Verifying the validity of our ansatz for $x\notin(z,y)$ is entirely analogous. For $x\leq z$ we get
\begin{align*}
J_{(z,y)}(x)&=\varphi(x)\int_a^xdA_1(i,\beta(i))+\psi(x)\int_a^{x}dA_2(i,\beta(i))+\psi(x)\frac{g(\zeta)}{\psi(\zeta)}\\
&=\varphi(x)A_1(x,\beta(x))+\psi(x)A_2(x,\beta(x))=\frac{\hat{\varphi}_\beta(x)}{\hat{\varphi}_\beta(x)}g(x)+\frac{\hat{\psi}_x(x)}{\hat{\psi}_x(\beta)}g(\beta)=
g(x).
\end{align*}
For $x\in(y,\zeta)$ we get
\begin{align*}
J_{(z,y)}(x)&=\varphi(x)\int_a^{\alpha(x)}dA_1(i,\beta(i))+\psi(x)\int_a^{\alpha(x)}dA_2(i,\beta(i))+\psi(x)\frac{g(\zeta)}{\psi(\zeta)}\\
&=\frac{\hat{\varphi}_x(x)}{\hat{\varphi}_x(\alpha)}g(\alpha)+\frac{\hat{\psi}_\alpha(x)}{\hat{\psi}_\alpha(x)}g(x)=g(x).
\end{align*}
Finally, for $x\geq\zeta$ we get
\begin{align*}
J_{(z,y)}(x)=\psi(x)\int_x^b\frac{g(m)\psi'(m)-g'(m)\psi(m)}{\psi^2(m)}dm=g(x),
\end{align*}
where the equality follows from the derivation of the one-sided case \eqref{eq Jy}.
Let us now summarize the analysis done so far into the following theorem.

\begin{theorem}\label{esityslause}
Assume that  $z,y\in g^{-1}(\mathbb{R}_+)$ satisfy the condition $a<z<y<b$, that $a$ and $b$ are natural for $X$, and that Assumption \ref{as 2} holds.
Furthermore, assume that $f_1$ and $f_2$ are as in \eqref{eq f12}. Then, if $f_1$ is non-increasing and $f_2$ is non-decreasing, $J_{(z,y)}(x)=V_{z,y}(x)$.
Moreover, if inequality $f_1(z)\lor f_2(y)\geq 0$ is satisfied as well, then
$V_{z,y}(x)$ and $J_{(z,y)}(x)$ are $r$-excessive for $X$.
\end{theorem}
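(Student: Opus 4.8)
The plan is to recognize that the substance of the first assertion has already been carried out in the case-by-case computation of the subsection \emph{Verifying our ansatz}, so the proof is largely a matter of collecting those computations under the stated hypotheses and then invoking Lemma \ref{lemma J} for the $r$-excessivity claim. First I would record that the assumptions that $f_1$ is non-increasing and $f_2$ is non-decreasing are precisely what make the functions $\alpha$, $\beta$ and the state $\zeta$ unambiguously defined, so that the three-regime decomposition of $J_{(z,y)}(x)$ into $x\le z$, $x\in(z,y)$, and $x\ge y$ is meaningful; and that the naturality of $a$ and $b$ is what lets the boundary terms be evaluated in closed form.

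For the central regime $x\in(z,y)$ I would substitute the densities \eqref{eq px} and the ansatz forms \eqref{eq f12} into $J_{(z,y)}(x)$, reorder, and change variables $i=\alpha(m)$ (equivalently $m=\beta(i)$) in the integrals over $[y,\zeta)$, so that $dm=\beta'(i)\,di$. The key step—and the one I expect to be the main obstacle—is to see that, after this substitution, the integrand collapses to the total derivatives $\tfrac{d}{di}A_1(i,\beta(i))$ and $\tfrac{d}{di}A_2(i,\beta(i))$. This rests on combining the partial-derivative identities \eqref{nc1}--\eqref{nc2} (the $f_1$-contribution supplying the $\partial_z A_k$ part and the transformed $f_2$-contribution supplying the $\beta'(i)\,\partial_y A_k$ part) with the indifference relation $f_1(i)=f_2(\beta(i))$ that is built into the choice \eqref{eq f12}. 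A point requiring care is that $f_1,f_2$ are only weakly monotone: on any flat piece one has $\beta'(i)=0$ and the integrand vanishes, which is exactly what justifies replacing the lower limit by $z$. Integrating the total derivatives and using naturality of $a$ to get $A_1(a,\zeta)=0$ and $A_2(a,\zeta)=g(\zeta)/\psi(\zeta)$ then collapses the expression to $A_1(z,y)\varphi(x)+A_2(z,y)\psi(x)$, which equals $V_{z,y}(x)$ by \eqref{eq A}. The remaining regimes $x\le z$, $x\in(y,\zeta)$, and $x\ge\zeta$ reduce, after the same change of variables, to telescoping evaluations of $A_1,A_2$ and to the one-sided computation \eqref{eq Jy}, each returning $g(x)=V_{z,y}(x)$; this completes $J_{(z,y)}(x)=V_{z,y}(x)$ on all of $\mathcal{I}$.

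Finally, for the $r$-excessivity assertion I would simply combine the first part with Lemma \ref{lemma J}. Since $f_1$ is non-increasing and $f_2$ is non-decreasing, the boundary values satisfy $f_1(z-)\ge f_1(z)$ and $f_2(y+)\ge f_2(y)$, so the hypothesis $f_1(z)\lor f_2(y)\ge 0$ implies $f_1(z-)\lor f_2(y+)\ge 0$. Lemma \ref{lemma J} then gives that $J_{(z,y)}(x)$ is $r$-excessive for $X$, and since $V_{z,y}(x)=J_{(z,y)}(x)$ by the first part, $V_{z,y}(x)$ is $r$-excessive as well.
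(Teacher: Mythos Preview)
Your proposal is correct and follows essentially the same route as the paper: the identity $J_{(z,y)}(x)=V_{z,y}(x)$ is simply the content of the computations in the subsection \emph{Verifying our ansatz}, and the $r$-excessivity is obtained by invoking Lemma \ref{lemma J}. Your explicit observation that the monotonicity of $f_1,f_2$ upgrades the hypothesis $f_1(z)\lor f_2(y)\ge 0$ to the condition $f_1(z-)\lor f_2(y+)\ge 0$ required by Lemma \ref{lemma J} is a welcome bit of care that the paper leaves implicit.
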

\begin{proof}
The validity of identity $J_{(z,y)}(x)=V_{z,y}(x)$ has been proven in the text. The alleged $r$-excessivity
of $J_{(z,y)}(x)$ and, consequently, $V_{z,y}(x)$ now follows from Lemma \ref{lemma J}.
\end{proof}

It is worth pointing out that we can replace assumption (B) of \eqref{as 2} with the condition "$f_2(b)<f_1(a)$, $g(b)<\infty$, and $\lim_{x\downarrow a}g(x)/\varphi(x)=0$"
and the analysis presented above still holds. In that case, we would need to define a point $\hat{\zeta}=f_1^{-1}(f_2(b))$, instead of $\zeta$. We also observe that
Theorem \ref{esityslause} does not require the continuity of the function $f$ (at the points $z$ and $y$)
since the monotonicity of $f_1$ and $f_2$ are sufficient for the equality $J_{(z,y)}=V_{z,y}(x)$.
As in the single boundary case, we again notice that these conditions do not guarantee that the value dominates the exercise  payoff.

\subsection{Conditions under which $f$ is as required}

In the statement of our problem, we assumed that $f(x)=f_1(x)\mathbbm{1}_{(a,z]}(x)+f_2(x)\mathbbm{1}_{[y,b)}(x)$, where $f_1(x)$ is non-increasing and $f_2(x)$
is non-decreasing. In this section we state a set of sufficient conditions under which these requirements are unambiguously fulfilled. Before stating our principal
characterization, we first make the following assumptions:
\begin{assumption}\label{Assumption}
Assume that the exercise payoff $g\in C^2(\mathcal{I})$ satisfies the conditions:
\begin{itemize}
  \item [(a)] There is a threshold $\hat{x}=\argmax\{(\mathcal{G}_rg)(x)\}\in \mathcal{I}$ such that $(\mathcal{G}_rg)(x)$ is nondecreasing on $(a,\hat{x})$, non-increasing on $(\hat{x},b)$, and
  $(\mathcal{G}_rg)(\hat{x})>0$,
  \item[(b)] $(\mathcal{G}_rg)(b-)\leq (\mathcal{G}_rg)(a+) < -\varepsilon,$ where $\varepsilon>0$.
\end{itemize}
\end{assumption}
It is worth noticing that assumptions (a) and (b) imply that there exists two states $a<x_0<x_1<b$ so that $x_0=\inf\{x\in (\mathcal{G}_rg)^{-1}(0)\}$ and
$x_1=\sup\{x\in (\mathcal{G}_rg)^{-1}(0)\}$, and $(\mathcal{G}_rg)^{-1}(0)\neq\emptyset$. Assumption (b) essentially guarantees that there exists a unique point $\zeta$
at which the increasing function $f_2(x)$ coincides with the one associated with the single
boundary setting characterized in \eqref{eq f2zeta}. We could naturally assume that $(\mathcal{G}_rg)(a+)\leq (\mathcal{G}_rg)(b-) < -\varepsilon,$ where
$\varepsilon>0$. In that case the point $\hat{\zeta}$ would be on the decreasing part $f_1(x)$. Since the analysis is completely analogous, we leave it for the
interested reader. Moreover, as was shown in \cite{Lempa10} and \cite{Ma12} our conditions are sufficient
for the existence of a unique extremal pair $z^\ast\in(a,x_0), y^\ast\in (x_1,b)$ s.t. $\tau_{z^\ast,y^\ast}=\inf\{t\geq0: X_t\notin (z^\ast,y^\ast)\}$
constitutes the optimal stopping time, $V_{z^\ast,y^\ast}(x)=V(x)$ constitutes the value of the optimal stopping problem, $C=(z^\ast,y^\ast)$ is the
continuation region, and $\Gamma=(a,z^\ast]\cup[y^\ast,b)$ is the stopping region.

The existence of a pair of monotonic and nonnegative functions $f_1$ and $f_2$ is proven in the following.
\begin{theorem}\label{thm 2suff}
Let Assumption \ref{Assumption} hold. Then, $f_1$ is non-increasing and $f_2$ is non-decreasing. Moreover, $f_1(z^\ast)=f_2(y^\ast)=0$ and
\begin{align*}
f_1(i)&=-\frac{\int_i^\beta\hat{\varphi}_\beta(t)(\mathcal{G}_rg)(t)m'(t)dt}{r\int_i^\beta\hat{\varphi}_\beta(t)m'(t)dt} = -\E_x[(\mathcal{G}_rg)(\tilde{X}_T)|\tilde{I}_T= i]\\
f_2(m)&=-\frac{\int_\alpha^m\hat{\psi}_\alpha(t)(\mathcal{G}_rg)(t)m'(t)dt}{r\int_\alpha^m\hat{\psi}_\alpha(t)m'(t)dt} = -\E_x[(\mathcal{G}_rg)(\hat{X}_T)|\hat{M}_T=m],
\end{align*}
where $\tilde{X}_t=\{X_t;t<\tau_\beta\}$, $\hat{X}_t=\{X_t;t<\tau_\alpha\}$, $\tilde{I}_t=\inf\{X_s;s\leq t\wedge\tau_\beta\}$, and $\hat{M}_t=\sup\{X_s;s\leq t\wedge\tau_\alpha\}$.
\end{theorem}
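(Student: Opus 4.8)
The plan is to prove the three assertions in order, starting from the integral representations on which everything else rests. Writing $\alpha=\alpha(m)$ and using the functional $L_ug$, I would first record the Wronskian boundary values $\hat{\psi}_\alpha(\alpha)=0$, $\hat{\psi}_\alpha'(\alpha)=BS'(\alpha)$, whence $(L_{\hat{\psi}_\alpha}g)(\alpha)=Bg(\alpha)$ and $(L_{\hat{\psi}_\alpha}\mathbbm{1})(\alpha)=B$. Rewriting the numerator of $f_2$ in \eqref{eq f12} as $g(\alpha)BS'(m)-S'(m)(L_{\hat{\psi}_\alpha}g)(m)$ and its denominator as $S'(m)\bigl(B-(L_{\hat{\psi}_\alpha}\mathbbm{1})(m)\bigr)$, I would invoke the canonical identity \eqref{canonical} for $g$ and for $\mathbbm{1}$ (recalling $(\mathcal{G}_r\mathbbm{1})=-r$). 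The $g(\alpha)B$ terms and the common factor $S'(m)$ then cancel, leaving
\[
f_2(m)=-\frac{\int_\alpha^m(\mathcal{G}_rg)(t)\hat{\psi}_\alpha(t)m'(t)\,dt}{r\int_\alpha^m\hat{\psi}_\alpha(t)m'(t)\,dt}.
\]
The symmetric computation with $\hat{\varphi}_\beta$, using $\hat{\varphi}_\beta'(\beta)=-BS'(\beta)$, gives the stated formula for $f_1$, and comparison of these ratios with the conditional expectations of Lemma \ref{jointprobabilitydist} recasts them as $-\E_x[(\mathcal{G}_rg)(\hat{X}_T)\mid\hat{M}_T=m]$ and $-\E_x[(\mathcal{G}_rg)(\tilde{X}_T)\mid\tilde{I}_T=i]$.

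For the boundary values I would use that Assumption \ref{Assumption} places the optimal pair at $z^\ast\in(a,x_0)$, $y^\ast\in(x_1,b)$ with $V(x)=V_{z^\ast,y^\ast}(x)$ (Remark \ref{sufficient2bound}) and that $g\in C^2$, so the first-order conditions \eqref{nc1}--\eqref{nc2} hold there. Evaluating $f_2$ at $m=y^\ast$ with $\alpha(y^\ast)=z^\ast$, the numerator becomes $\int_{z^\ast}^{y^\ast}(\mathcal{G}_rg)(t)\hat{\psi}_{z^\ast}(t)m'(t)\,dt=Bg(z^\ast)-(L_{\hat{\psi}_{z^\ast}}g)(y^\ast)$, which is minus the bracket in \eqref{nc1} and therefore vanishes at the optimum; hence $f_2(y^\ast)=0$, and symmetrically $f_1(z^\ast)=0$ from \eqref{nc2}. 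Since each function is monotone and vanishes at its boundary, the nonnegativity ($f_2\ge0$ on $[y^\ast,b)$, $f_1\ge0$ on $(a,z^\ast]$) follows, and the common value $0$ renders the identifications $\alpha(y^\ast)=z^\ast$, $\beta(z^\ast)=y^\ast$ self-consistent.

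The monotonicity is the substantive claim, and I would split the range of $f_2$ at $\zeta$. On the coupled region $[y^\ast,\zeta)$, where $\alpha(m)\in(a,z^\ast]$ varies with $m$, I would write $f_2(m)=F(\alpha(m),m)$ with $F(\ell,m)$ equal to $-\tfrac1r$ times the $\hat{\psi}_\ell$-weighted average of $(\mathcal{G}_rg)$ over $(\ell,m)$, differentiate, and eliminate $\alpha'(m)$ through the indifference relation $f_1(\alpha(m))=f_2(m)$, which forces $\alpha$ to be non-increasing. The goal is to deduce $f_2'\ge0$ from the single-peaked profile of $(\mathcal{G}_rg)$ in Assumption \ref{Assumption}(a), the endpoint ordering $(\mathcal{G}_rg)(b-)\le(\mathcal{G}_rg)(a+)<0$ in (b), and the positivity and monotonicity of the weight $\hat{\psi}_\ell$. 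On the remaining region $[\zeta,b)$ the convention $\alpha(m)=a$ collapses $f_2$ to the one-sided mapping \eqref{eq f2zeta}; there $(\mathcal{G}_rg)$ is non-increasing and non-positive (since $\zeta>y^\ast>x_1>\hat{x}$), so the estimate $\mathcal{D}(m)\le\mathcal{D}(\zeta)\le0$ from the proof of Lemma \ref{lemma mon1}(B) propagates the sign to the right, the base case $\mathcal{D}(\zeta)\le0$ being inherited by continuity from the coupled region. The statements for $f_1$ follow verbatim with $\hat{\varphi}_\beta$.

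I expect the coupled region to be the main obstacle. There both the integration limits and the weight $\hat{\psi}_{\alpha(m)}$ move with $m$ while the integrand $(\mathcal{G}_rg)$ changes sign inside $(\alpha(m),m)$, so the derivative of the weighted average is not sign-definite term by term, and a naive pointwise comparison fails: for $\ell$ near $a$ and $m$ just past $x_1$ one can have $(\mathcal{G}_rg)(m)>(\mathcal{G}_rg)(\ell)$. The real work is to show that extending the interval (larger $m$, smaller $\ell=\alpha(m)$) always lowers the $\hat{\psi}_\ell$-weighted average of $(\mathcal{G}_rg)$, and this must be extracted from the unimodality and the endpoint ordering of Assumption \ref{Assumption} rather than from the sign of the integrand alone; controlling the interaction between the moving weight and the sign change of $(\mathcal{G}_rg)$ is the crux of the argument.
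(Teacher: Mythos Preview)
Your derivation of the integral forms of $f_1,f_2$ via the canonical identity \eqref{canonical} and of the boundary identities $f_1(z^\ast)=f_2(y^\ast)=0$ from the first-order conditions \eqref{nc1}--\eqref{nc2} is correct and matches the paper. The monotonicity argument, however, has a structural problem on top of the difficulty you already flag: you propose to write $f_2(m)=F(\alpha(m),m)$ and ``eliminate $\alpha'(m)$ through the indifference relation $f_1(\alpha(m))=f_2(m)$, which forces $\alpha$ to be non-increasing'', but $\alpha=f_1^{-1}\circ f_2$ is only defined once $f_1,f_2$ are already monotone, and differentiating the indifference relation produces only $f_1'(\alpha)\,\alpha'=f_2'$, which carries no sign until one side is known. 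Your split at $\zeta$ does not rescue this, since the ``base case $\mathcal{D}(\zeta)\le0$'' you need on $[\zeta,b)$ is itself borrowed from the unresolved coupled region.

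The paper avoids the circularity by never differentiating $f_1$ or $f_2$. It treats $(z,y)$ as two \emph{free} variables and rewrites the coupling $F_1^y(z)=F_2^z(y)$ as the single scalar equation
\[
H(z,y)=\int_z^y(\mathcal{G}_rg)(t)\,u_1(t)\,m'(t)\,dt=0,\qquad
u_1(x)=\varphi(x)\Bigl(\tfrac{\psi'(y)}{S'(y)}-\tfrac{\psi'(z)}{S'(z)}\Bigr)-\psi(x)\Bigl(\tfrac{\varphi'(y)}{S'(y)}-\tfrac{\varphi'(z)}{S'(z)}\Bigr),
\]
where $u_1$ is a decreasing $r$-harmonic function. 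The optimal pair gives $H(z^\ast,y^\ast)=0$; for each fixed $z\in(a,z^\ast)$ an intermediate-value argument ($H(z,z)=0$, $H_y(z,\cdot)<0$ on $(z,\hat{x}]$, and $\lim_{y\uparrow b}H(z,y)=+\infty$, the last limit obtained from the endpoint ordering in Assumption~\ref{Assumption}(b) through a normalised quotient $\hat{H}$) yields a root $y_z\in(y^\ast,b)$. Along this level curve the implicit derivative simplifies to
\[
y_z'=-\frac{H_z}{H_y}
=-\frac{m'(z)\displaystyle\int_z^{y_z}\bigl[(\mathcal{G}_rg)(t)-(\mathcal{G}_rg)(z)\bigr]\hat{\psi}_z(t)m'(t)\,dt}
{m'(y_z)\displaystyle\int_z^{y_z}\bigl[(\mathcal{G}_rg)(t)-(\mathcal{G}_rg)(y_z)\bigr]\hat{\varphi}_{y_z}(t)m'(t)\,dt}<0,
\]
since $H_z$ and $H_y$ are integrals of the \emph{centred} generator $(\mathcal{G}_rg)(\cdot)-(\mathcal{G}_rg)(z)$ and $(\mathcal{G}_rg)(\cdot)-(\mathcal{G}_rg)(y)$ against positive kernels, so their signs are fixed by the unimodality of $(\mathcal{G}_rg)$ without any reference to $\alpha'$. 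This produces the monotone correspondence $z\mapsto y_z=\beta(z)$ in one stroke over the entire range $(a,z^\ast)$; no split at $\zeta$ and no appeal to Lemma~\ref{lemma mon1} are needed, and the one-sided regime $m\ge\zeta$ appears only a posteriori as the limit $z\downarrow a$.
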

\begin{proof}
In order to establish the existence and monotonicity of the mappings $f_1,f_2$ consider first the functions
\begin{align*}
F_1^{y}(z)&=\frac{\frac{g'(z)}{S'(z)}\hat{\varphi}_{y}(z)-g(z)\frac{\hat{\varphi}_{y}'(z)}{S'(z)}-Bg(y)}{-B - \frac{\hat{\varphi}_{y}'(z)}{S'(z)}}\\
F_2^{z}(y)&=\frac{\frac{g'(y)}{S'(y)}\hat{\psi}_{z}(y)-g(y)\frac{\hat{\psi}_{z}'(y)}{S'(y)}+Bg(z)}{B-\frac{\hat{\psi}_{z}'(y)}{S'(y)}}
\end{align*}
derived in \eqref{eq f12}. Utilizing the identities \eqref{linearityGenerator} and \eqref{canonical} show that these mappings can be re-expressed in the simpler integral form
\begin{align}
F_1^{y}(z)&=\frac{(L_{\hat{\varphi}_{y}}g)(y)-(L_{\hat{\varphi}_{y}}g)(z)}{(L_{\hat{\varphi}_{y}}\mathbbm{1})(y)-(L_{\hat{\varphi}_{y}}\mathbbm{1})(z)}=
-\frac{\int_z^y(\mathcal{G}_rg)(t)\hat{\varphi}_{y}(t)m'(t)dt}{r\int_z^y\hat{\varphi}_{y}(t)m'(t)dt}\label{eq1}\\
F_2^{z}(y)&=\frac{(L_{\hat{\psi}_{z}}g)(y)-(L_{\hat{\psi}_{z}}g)(z)}{(L_{\hat{\psi}_{z}}\mathbbm{1})(y)-(L_{\hat{\psi}_{z}}\mathbbm{1})(z)} =
-\frac{\int_z^y(\mathcal{G}_rg)(t)\hat{\psi}_{z}(t)m'(t)dt}{r\int_z^y\hat{\psi}_{z}(t)m'(t)dt}.\label{eq2}
\end{align}
The alleged representation of the functions $f_1$ and $f_2$ follow directly from \eqref{eq1}, \eqref{eq2}, and Lemma \ref{jointprobabilitydist}
provided that the existence of a root of equation $F_1^{y}(z)=F_2^{z}(y)$ can be assured.
Utilizing the identities \eqref{eq1} and \eqref{eq2} show that the solutions have to satisfy identity $H(z,y)=0$, where $H:\mathcal{I}^2\mapsto \mathbb{R}$ is defined by
\begin{align}
H(z,y)= \int_z^y(\mathcal{G}_rg)(t)u_1(t)m'(t)dt,\label{nonlin}
\end{align}
and
$$
u_1(x) = \varphi(x)\left(\frac{\psi'(y)}{S'(y)}-\frac{\psi'(z)}{S'(z)}\right)-\psi(x)\left(\frac{\varphi'(y)}{S'(y)}-\frac{\varphi'(z)}{S'(z)}\right)
$$
is monotonically decreasing and $r$-harmonic and satisfies the boundary conditions $u_1(z)=\hat{\psi}_{z}'(y)/S'(y)-B > 0$, $u_1(y)= B+\hat{\varphi}_{y}'(z)/S'(z) < 0$, and $u_1'(z)S'(y)=u_1'(y)S'(z).$

We first notice that assumptions 4.6. (a) and (b) are sufficient for the existence of a unique pair $z^\ast\in (a,x_0), y^\ast\in (x_1,b)$ satisfying the optimality conditions
$(L_\psi g)(z^\ast)=(L_\psi g)(y^\ast)$ and $(L_\varphi g)(z^\ast)=(L_\varphi g)(y^\ast)$ implying that for any $r$-harmonic map
$u(x)=c_1\psi(x) + c_2\varphi(x), c_1,c_2\in \mathbb{R}$ we have
\begin{align*}
\int_{z^\ast}^{y^\ast}u(t)(\mathcal{G}_rg)(t)m'(t)dt=0.
\end{align*}
Thus, $H(z^\ast,y^\ast)=0$ showing that equation $H(z,y)=0$ has at least one solution such that $y_{z^\ast}=y^\ast\in (x_1,b)$. Moreover,
invoking \eqref{nc1}, \eqref{nc2}, and \eqref{eq f12} shows that the necessary
conditions for optimality of the pair $(z^\ast,y^\ast)$ coincide with the conditions $f_1(z^\ast)=0=f_2(y^\ast)$.

Given the results above, fix now $z\in (a,z^\ast)$ and consider the function $H(z,y)$.
Standard differentiation yields
that
\begin{align}
H_y(z,y)&=rm'(y)\int_{z}^{y}\hat{\varphi}_y(t)(\mathcal{G}_rg)(t)m'(t)dt-(\mathcal{G}_rg)(y)m'(y)\left(\frac{\hat{\varphi}_y'(y)}{S'(y)}-
\frac{\hat{\varphi}_y'(z)}{S'(z)}\right)\label{partial}\\
H_z(z,y) &= rm'(z)\int_{z}^{y}\hat{\psi}_z(t)(\mathcal{G}_rg)(t)m'(t)dt-(\mathcal{G}_rg)(z)m'(z)\left(\frac{\hat{\psi}_z'(y)}{S'(y)}-\frac{\hat{\psi}_z'(z)}{S'(z)}\right)\label{partialz}
\end{align}
demonstrating that $H(z,z)=H_y(z,z)=0$. Moreover, if $y\in(z, \hat{x}]$, then the monotonicity of the generator $(\mathcal{G}_rg)(x)$ on $(a,\hat{x})$
guarantees that $H_y(z,y)<0$ for all  $y\in(z, \hat{x}]$.
Hence, equation $H_y(z,y)=0$ does not have roots satisfying condition $z\neq y$ when $y\in(z,\hat{x})$. In a completely analogous fashion \eqref{partialz} shows that
$H(y,y)=H_z(y,y)=0$ and $H_z(z,y)<0$ for all $\hat{x}\leq z < y$. Hence, $H_z(z,y)=0$ does not have roots satisfying condition $z\neq y$ when $z\in(\hat{x}, y)$.
Given these observations, we notice that
the existence of a root $y_z\in (y^\ast,b)$ would be guaranteed provided that $\lim_{y\rightarrow b}H(z,y)>0$ for all $z\in (a,z^\ast)$.
To see that this is indeed the case, we first consider the limiting behavior of the function $\hat{H}:(a,\hat{x})\times(\hat{x},b)\mapsto\mathbb{R}$ defined as
$$
\hat{H}(z,y)=\frac{H(z,y)}{\left(\frac{\psi'(y)}{S'(y)}-\frac{\psi'(z)}{S'(z)}\right)\left(\frac{\varphi'(y)}{S'(y)}-\frac{\varphi'(z)}{S'(z)}\right)}.
$$
It is clear that
\begin{align*}
\hat{H}(z,y)=\frac{\int_z^y(\mathcal{G}_rg)(t)\varphi(t)m'(t)dt}{r\int_z^y\varphi(t)m'(t)dt}-
\frac{\int_z^y(\mathcal{G}_rg)(t)\psi(t)m'(t)dt}{r\int_z^y\psi(t)m'(t)dt}.
\end{align*}
Utilizing \eqref{limitingratio} now implies that for all $z\in \mathcal{I}$ we have
$$
\lim_{y\uparrow b}\frac{\int_z^y(\mathcal{G}_rg)(t)\psi(t)m'(t)dt}{r\int_z^y\psi(t)m'(t)dt}=\frac{1}{r}\lim_{y\uparrow b}(\mathcal{G}_rg)(y)=\frac{1}{r}(\mathcal{G}_rg)(b-)<0.
$$
Hence, for all $z\in \mathcal{I}$ it holds that
\begin{align*}
\lim_{y\rightarrow b-}\hat{H}(z,y)=\frac{\int_z^b\left((\mathcal{G}_rg)(t)-(\mathcal{G}_rg)(b-)\right)\varphi(t)m'(t)dt}{r\int_z^b\varphi(t)m'(t)dt}>0
\end{align*}
since $b=\argmin\{(\mathcal{G}_rg)(x)\}$.
The definition of $\hat{H}(z,y)$ now implies that
$\lim_{y\uparrow b}H(z,y)=\infty$ for all $z\in \mathcal{I}$. Thus, for all $z\in (a,z^\ast)$
equation $H(z,y)=0$ has a root $y_z\in(y^\ast,b)$. Moreover, implicit differentiation shows that for all $z\in (a,z^\ast)$ we have
$$
y_{z}'=-\frac{H_z(z,y)}{H_y(z,y)}=-\frac{m'(z)\int_z^y\left((\mathcal{G}_rg)(t)-
(\mathcal{G}_rg)(z)\right)\hat{\psi}_z(t)m'(t)dt}{m'(y)\int_z^y\left((\mathcal{G}_rg)(t)-(\mathcal{G}_rg)(y)\right)\hat{\varphi}_y(t)m'(t)dt} < 0
$$
proving the alleged monotonicity.
\end{proof}

\begin{remark}
Let $u(x)=c_1\psi(x)+c_2\varphi(x)\geq 0$, where $c_1,c_2\in \mathbb{R}$, and assume that $\xi_{u}$ is a random variable distributed on $(z,y)$ according to the
probability distribution $P_{u}$ with density
$$
p_{u}(t)=\frac{u(t)m'(t)}{\int_z^yu(t)m'(t)dt}.
$$
Then, our results demonstrate that the functions $f_1$ and $f_2$ can be determined from the stationary identity
\begin{align}
\mathbb{E}\left[(\mathcal{G}_rg)(\xi_{\varphi})\right] = \mathbb{E}\left[(\mathcal{G}_rg)(\xi_{\psi })\right].\label{ergodic1}
\end{align}
By utilizing standard ergodic limit results, identity \eqref{ergodic1} can alternatively be expressed as (cf. Section II.35 in \cite{BorSal02})
$$
\lim_{t\rightarrow \infty}\frac{\int_0^t (\mathcal{G}_rg)(X_s)\varphi(X_s)\mathbbm{1}_{(z,y)}(X_s)ds}{\int_0^t \varphi(X_s)\mathbbm{1}_{(z,y)}(X_s)ds}=
\lim_{t\rightarrow \infty}\frac{\int_0^t (\mathcal{G}_rg)(X_s)\psi(X_s)\mathbbm{1}_{(z,y)}(X_s)ds}{\int_0^t \psi(X_s)\mathbbm{1}_{(z,y)}(X_s)ds}.
$$
\end{remark}

Theorem \ref{thm 2suff} characterizes the functions $f_1$ and $f_2$ in a smooth setting. According to Theorem \ref{thm 2suff}, the functions $f_1$ and $f_2$ vanish at the optimal boundaries
$z^\ast$ and $y^\ast$, respectively. Moreover, according to Theorem \ref{thm 2suff}, the functions $f_1$ and $f_2$ can be expressed as conditional expectations of the generator
$(\mathcal{G}_rg)(x)$. The decreasing mapping $f_1(i)$ is associated with the diffusion $X$ killed at the state $\beta$ and its running infimum while
the increasing mapping $f_2(m)$ is associated with the diffusion $X$ killed at the state $\alpha$ and its running supremum.
Due to the interdependence of $f_1$ and $f_2$ it is not, however, clear beforehand whether the identities $f_1(z^\ast)=0$ and  $f_1(y^\ast)=0$ continue to hold in a less smooth framework.
As our subsequent examples indicate, there are cases under which these identities cease to hold as soon as the smooth pasting condition is not satisfied at one of the optimal exercise boundaries.

It is worth emphasizing that even though Theorem \ref{thm 2suff} assumes that the exercise payoff is smooth and that the boundaries of the state space of the
underlying diffusion are natural,
its results appear to be valid also under weaker regularity conditions and boundary classifications. More precisely, as is clear from the proof of Theorem \ref{thm 2suff} establishing
the existence and monotonicity
of the functions  $f_1$ and $f_2$ can essentially be reduced to the analysis of the identity
\begin{align}
\frac{(L_\varphi g)(y)-(L_\varphi g)(z)}{(L_\varphi \mathbbm{1})(y)-(L_\varphi \mathbbm{1})(z)}=
\frac{(L_\psi g)(y)-(L_\psi g)(z)}{(L_\psi \mathbbm{1})(y)-(L_\psi \mathbbm{1})(z)}.\label{keyeq}
\end{align}
Since the monotonicity and limiting behavior of the functionals $(L_\varphi g)(x)$ and $(L_\psi g)(x)$ is principally dictated by the behavior of the generator
$(\mathcal{G}_rg)(x)$ (when defined), one could, in principle, attempt to delineate more general circumstances under which the uniqueness of a monotone solution for
\eqref{keyeq} could be guaranteed. A natural extension which could be utilized to accomplish this task would be to rely on the weak formulation of Dynkin's theorem and,
essentially, focus on those rewards which admit the representation (see, for example, \cite{CrMo14},\cite{HeSt10}, and \cite{LaZe13})
$$
\E_x\left[e^{-r\tau}g(X_\tau)\mathbbm{1}_{\tau<\infty}\right]=g(x)+\E_x\left[\int_0^\tau e^{-rs}\tilde{g}(X_s)ds;\tau<\infty\right],
$$
where $\tilde{g}\in \mathcal{L}^1(\mathcal{I})$ coincides with the generator $(\mathcal{G}_rg)(x)$ whenever the payoff is sufficiently smooth.
It is clear from the proof of Theorem \ref{thm 2suff} that if the function $\tilde{g}$ satisfies parts (a) and (b) of Assumption \ref{Assumption}
with $\mathcal{G}_rg$ replaced by $\tilde{g}$ and $\tilde{g}$ is continuous outside a finite set of points in $\mathcal{I}$, then the identity
\begin{align*}
\frac{\int_z^y\psi(t)\tilde{g}(t)m'(t)dt}{r\int_z^y\psi(t)m'(t)dt}=
\frac{\int_z^y\varphi(t)\tilde{g}(t)m'(t)dt}{\int_z^y\varphi(t)m'(t)dt}
\end{align*}
generates a pair of functions $f_1,f_2$ satisfying our monotonicity requirements and characterizing the optimal exercise boundaries through the identities $z^\ast=\sup\{x\in \mathcal{I}: f_1(x)\geq 0\}$ and $y^\ast=\inf\{x\in \mathcal{I}: f_2(x)\geq 0\}$.

It is also clear that the second integral expression stated in Theorem \ref{thm 2suff} resembles the expression \eqref{esitys1} derived in the one-sided case.
This is naturally not surprising in light of the fact that the one-sided cases can be derived from the two-sided case as limiting cases. Our main observation on this is summarized in the following.
	
\begin{lemma}\label{lemma limita}
By setting $z\mapsto a$, we retrieve the situation of Theorem \ref{theorem inc}.
\end{lemma}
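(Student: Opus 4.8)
The plan is to track the auxiliary functions $\alpha$, $\beta$, $\zeta$, $f_1$, and $f_2$ as the lower threshold $z$ is pushed to the boundary $a$, and to show that in this limit the decreasing branch disappears while the increasing branch $f_2$ collapses onto the single-boundary function $\hat{f}$ of \eqref{eq f1}. The starting observation is that the range of the map $\alpha(m)=f_1^{-1}(f_2(m))$ is contained in $(a,z]$, which shrinks to the single point $a$ as $z\downarrow a$. Equivalently, since $\zeta=f_2^{-1}(f_1(a+))$ and $\alpha(m)=a$ holds precisely for $m\geq\zeta$, the interval $[y,\zeta)$ on which $\alpha$ takes nontrivial values must collapse, so that $\zeta\downarrow y$ and $\alpha(m)=a$ for every $m\in[y,b)$ in the limit; simultaneously the domain $(a,z]$ of $f_1$ and $\beta$ degenerates.

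Next I would feed $\alpha\equiv a$ into the definition \eqref{eq f12} of $f_2$. This is exactly the reduction already carried out in the subsection on calculating the expectation: writing $\hat{\psi}_\alpha(m)=\varphi(\alpha)\tilde{\psi}_\alpha(m)$ with $\tilde{\psi}_i(m)=\psi(m)-\tfrac{\psi(i)}{\varphi(i)}\varphi(m)$ and using that $a$ is natural, so that $1/\varphi(\alpha)\to0$ and $\psi(\alpha)/\varphi(\alpha)\to0$ as $\alpha\downarrow a$, the terms carrying the factor $BS'(m)/\varphi(\alpha)$ drop out while $\tilde{\psi}_\alpha(m)\to\psi(m)$ and $\tilde{\psi}_\alpha'(m)\to\psi'(m)$. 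Substituting into \eqref{eq f12} leaves precisely \eqref{eq f2zeta}, namely
\[
f_2(m)=g(m)-\frac{g'(m)}{\psi'(m)}\psi(m)=\frac{S'(m)}{\psi'(m)}(L_\psi g)(m),
\]
which is the function $\hat{f}$ of Theorem \ref{theorem inc}.

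In parallel, the joint density \eqref{eq px} degenerates to the marginal law of the running supremum: with $\alpha(m)=a$ one has
\[
\p_x(I_T\geq\alpha(m),M_T\in dm)=\p_x(M_T\in dm)=\frac{\psi'(m)}{\psi^2(m)}\psi(x)\,dm,
\]
recovering \eqref{eq Pmax}. Consequently the decreasing contribution built from $f_1$ drops out of \eqref{decomp}, and $J_{(z,y)}(x)$ reduces to $\E_x[\hat{f}(M_T)\mathbbm{1}_{[y,b)}(M_T)]$, which is the single-boundary functional of Section \ref{sec 1sided}. Invoking Theorem \ref{theorem inc} then yields $J_y(x)=V_y(x)$ under the limiting condition at $b$, so the whole two-boundary construction specializes exactly to the one-sided increasing case.

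The step I expect to be the main obstacle is making the degeneration near $a$ rigorous: one must control the convergences $\zeta\downarrow y$, $\alpha(m)\to a$, and $\tilde{\psi}_\alpha\to\psi$ uniformly enough on compact subsets of $[y,b)$ to pass the limit through the integrals defining $J_{(z,y)}$, and the crucial vanishing of $\psi(\alpha)/\varphi(\alpha)$ relies essentially on $a$ being natural (in particular unattainable), so that $\varphi(a+)=\infty$. Once this boundary analysis is secured, the remaining identifications are the routine substitutions already recorded in the text.
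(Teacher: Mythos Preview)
Your proposal is correct and follows essentially the same route as the paper. The paper's own proof is considerably terser: it simply observes that $\lim_{z\to a}f(x)=f_2(x)\mathbbm{1}_{[y,b)}(x)$, notes that in this limit $\zeta=\beta(a)=\beta(z)=y$, and then invokes the already-derived reduction \eqref{eq f2zeta} to conclude $f_2(m)=g(m)-\frac{g'(m)}{\psi'(m)}\psi(m)$ for all $m\geq y$; it does not spell out the degeneration of $\alpha$, the joint density, or the uniformity issues you flag. Your more careful treatment of these points is sound and adds detail the paper omits, but the underlying mechanism---collapsing the lower branch so that the two-sided construction reduces to the single-boundary formula on $[\zeta,b)=[y,b)$---is identical.
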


\begin{proof}
Since $f(x)=f_1(x)\mathbbm{1}_{(a,z]}(x)+f_2(x)\mathbbm{1}_{[y,b)}(x)$, we see that $\lim_{z\mapsto a} f(x)= f_2(x)\mathbbm{1}_{[y,b)}(x)$.
Moreover, now $\zeta=\beta(a)=\beta(z)=y$, and thus, just as we derived \eqref{eq f2zeta}, we get $f_2(m)=g(m)-\frac{g'(m)}{\psi'(m)}\psi(m),$ for $m\geq y$.
\end{proof}

\subsection{Connection with the optimal stopping signal}
As pointed out in the introduction, there is a large variety of settings under which the values of stochastic control problems can be represented in terms of the
expected value of the running supremum  (see, for example, \cite{BaBa}, \cite{BaElKa}, \cite{BaFo}, \cite{ChSaTa}, \cite{FoKn1}, and \cite{FoKn2}).
In what follows, our objective is to connect the developed approach to the optimal stopping signal approach developed in \cite{BaBa}.

Following \cite{BaBa}, consider now the functional
$$
\hat{F}(x;z,y):= \frac{\mathbb{E}_x\left[g(x)-e^{-r\tau_{z,y}}g(X_{\tau_{z,y}})\right]}{1-\mathbb{E}_x\left[e^{-r\tau_{z,y}}\right]},
$$
where $\tau_{z,y}=\inf\{t\geq0:X_t\not\in (z,y)\}$ denotes the first exit time from the open set $(z,y)\subset \mathcal{I}$. Applying our previous computations
yield that $\hat{F}$ can be re-expressed as
$$
\hat{F}(x;z,y)=\frac{\hat{\psi}_z(y)g(x)-g(z)\hat{\varphi}_y(x)-g(y)\hat{\psi}_z(x)}{\hat{\psi}_z(y)-\hat{\psi}_z(x)-\hat{\varphi}_y(x)}.
$$
Letting first $z\uparrow x$ and then $y\downarrow x$ in this expression yields (by applying L'Hospital's rule)
\begin{align*}
h_1(x,y):=\hat{F}(x;x-,y)&=\frac{g(x)\hat{\varphi}'_y(x)-g'(x)\hat{\varphi}_y(x)+BS'(x)g(y)}{\hat{\varphi}'_y(x)+BS'(x)}\\
h_2(x,z):=\hat{F}(x;z,x+)&=\frac{\hat{\psi}_z'(x)g(x)-g'(x)\hat{\psi}_z(x)-BS'(x)g(z)}{\hat{\psi}_z'(x)-BS'(x)}.
\end{align*}
Utilizing the proof of Theorem \ref{thm 2suff} shows that the functions $h_1, h_2$ can be re-expressed in the compact form
\begin{align*}
h_1(x,y)&=\frac{(L_{\hat{\varphi}}g)(x)-(L_{\hat{\varphi}_y}g)(y)}{(L_{\hat{\varphi_y}}\mathbbm{1})(x)-(L_{\hat{\varphi_y}}\mathbbm{1})(y)}\\
h_2(x,z)&=\frac{(L_{\hat{\psi_z}}g)(x)-(L_{\hat{\psi}_z}g)(z)}{(L_{\hat{\psi}_z}\mathbbm{1})(x)-(L_{\hat{\psi}_z}\mathbbm{1})(z)},
\end{align*}
proving that $h_1(x,y)=F_1^y(x)$ and $h_2(x,y)=F_2^z(x)$. Hence, we notice that the functions generating $f_1$ and $f_2$
coincide with the functions characterizing the behavior of  the functional $\hat{F}(x;z,y)$. Theorem 13 in \cite{BaBa}
tells us that the stopping set can in the present setting be represented in terms of the so called \emph{optimal stopping signal} $\gamma$ in the following way.
\begin{theorem}
The stopping set $\Gamma=\{x\in \mathcal{I}: g(x)=V(x)\}=\{x\in \mathcal{I}: \gamma(x)\geq 0\}$, where
\begin{align*}
\gamma(x)=\min_{y\neq x}
\begin{cases}
g(x)-g'(x)\frac{\psi(x)}{\psi'(x)},\quad & y=a\\
h_2(y,x),\quad &a<y<x\\
h_1(x,y),\quad &x<y<b.\\
g(x)-g'(x)\frac{\varphi(x)}{\varphi'(x)},\quad & y=b
\end{cases}
\end{align*}
\end{theorem}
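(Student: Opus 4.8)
The plan is to obtain the statement as the specialization of Theorem 13 in \cite{BaBa} to the present regular linear diffusion, feeding in the explicit form of the functional $\hat{F}(x;z,y)$ and of its one-sided limits computed above. The first equality is a tautology: $\Gamma=\{x\in\mathcal{I}:g(x)=V(x)\}$ is by definition the stopping region, so only the characterization $\Gamma=\{x:\gamma(x)\geq0\}$ has content. The abstract stopping-signal theorem of \cite{BaBa} identifies $\gamma(x)$ with the infimum of the normalized expected loss $\hat{F}(x;z,y)$ incurred by postponing exercise until the first exit from a continuation interval around $x$, and asserts that $x$ is a stopping point exactly when this infimum is nonnegative, i.e. when no local continuation interval produces a strictly negative normalized loss. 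I would invoke this equivalence directly, having first checked that our hypotheses on $g$ (continuity, the integrability condition \eqref{integrability2}, and the piecewise $C^2$ regularity) and on $X$ (regular Hunt diffusion, natural boundaries) place us within its scope.

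The substantive step is then the explicit identification of the four families entering the displayed minimum. For the two interior families this has already been carried out: the closed form of $\hat{F}(x;z,y)$ derived from the representation \eqref{eq A} yields, upon letting $z\uparrow x$, the signal $h_1(x,y)=\hat{F}(x;x-,y)$ for $x<y<b$, and upon letting $y\downarrow x$ the signal $h_2(x,z)=\hat{F}(x;z,x+)$ for $a<z<x$; the identities $h_1=F_1^y$ and $h_2=F_2^z$ established above recast them in the compact integral form governed by $(\mathcal{G}_rg)$. For the two extreme cases I would pass to the natural-boundary limits. Exploiting that $a$ and $b$ are natural, so that $\varphi(a+)=+\infty$ and $\psi(b-)=+\infty$, whence $\psi(z)/\varphi(z)\to0$ and $g(z)/\varphi(z)\to0$ as $z\downarrow a$ and symmetrically at $b$, I would divide the numerator and denominator of $h_2(x,z)$ by $\varphi(z)$ and send $z\downarrow a$; an application of L'Hospital's rule collapses $h_2(x,z)$ to $g(x)-g'(x)\psi(x)/\psi'(x)$, while the mirror limit $y\uparrow b$ collapses $h_1(x,y)$ to $g(x)-g'(x)\varphi(x)/\varphi'(x)$. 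These two limiting expressions are precisely the single-boundary signals $\hat{f}$ of Section \ref{sec 1sided}, in agreement with Lemma \ref{lemma limita}.

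The part I expect to be most delicate is justifying that testing continuation only on intervals having $x$ as an endpoint suffices, i.e. that the genuinely two-sided intervals need not be examined separately. The key inequality to establish is $\hat{F}(x;z,y)\geq h_2(x,z)\wedge h_1(x,y)$ for every admissible $a\leq z<x<y\leq b$, which expresses that a two-sided continuation window is always dominated by the better of its two one-sided degenerations and thus can never tighten the signal. I would derive it by writing $\hat{F}(x;z,y)$ as a weighted combination of the two one-sided signals, the weights being the discounted exit masses $\E_x[e^{-r\tau_{z,y}};X_{\tau_{z,y}}=z]$ and $\E_x[e^{-r\tau_{z,y}};X_{\tau_{z,y}}=y]$ renormalized by $1-\E_x[e^{-r\tau_{z,y}}]$, and then invoking the elementary bound $\lambda\alpha+(1-\lambda)\beta\geq\alpha\wedge\beta$. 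Granting this, the infimum over all continuation intervals is attained in the one-sided degenerations, and minimizing $h_2(x,z)$ over $z\in(a,x)$, $h_1(x,y)$ over $y\in(x,b)$, and the two natural-boundary endpoints reproduces exactly the four-case formula for $\gamma$. The main obstacles are this interpolation bound together with the indeterminate $0/0$ limits at the natural boundaries, both of which hinge on the boundary classification of $\psi$ and $\varphi$ and on the sign structure of $(\mathcal{G}_rg)$ inherited from the proof of Theorem \ref{thm 2suff}.
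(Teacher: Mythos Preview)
The paper does not supply its own proof of this statement; it is quoted directly as Theorem~13 of \cite{BaBa}, specialized to the present diffusion setting with the explicit forms of $h_1$ and $h_2$ computed in the lines immediately preceding it. Your plan to obtain the result by specializing that abstract theorem is therefore exactly what the paper does, and your identification of the four branches---the two interior one-sided signals and the two natural-boundary limits reducing to the single-boundary $\hat{f}$ of Section~\ref{sec 1sided}---matches the paper's surrounding computations.

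One caution on the step you flag as delicate. Your proposed interpolation, writing $\hat{F}(x;z,y)$ as a convex combination $\lambda h_1(x,y)+(1-\lambda)h_2(x,z)$ with weights given by the renormalized discounted exit masses, does not hold in the form you describe. The numerator and denominator of $\hat{F}(x;z,y)$ do split additively according to the exit side, but the one-sided signals $h_1(x,y)$ and $h_2(x,z)$ are obtained by collapsing one boundary to $x$, which alters the exit probabilities themselves; the resulting ratio is not a weighted average of those two limits, so the elementary bound $\lambda\alpha+(1-\lambda)\beta\geq\alpha\wedge\beta$ is not directly applicable. The reduction from genuinely two-sided continuation intervals to one-sided ones is part of the content of the Bank--Baumgarten theorem and is handled there by their envelope/level-set machinery, not by such an interpolation. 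Since the paper treats the theorem as a black-box citation, you are on safe ground simply invoking \cite{BaBa} for that reduction rather than attempting to re-derive it; if you do want to supply an independent argument, the route is via the characterization of $\gamma$ as the unique solution of the associated representation problem in \cite{BaBa}, not via convexity.
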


If the smooth fit principle is met, then we know from Theorem \ref{thm 2suff} that the function $f(x)=f_1(x)\mathbbm{1}_{(a,z^*]}(x)+f_2(x)\mathbbm{1}_{[y^*,b)}(x)$
is positive on the same set as
$\gamma(x)$. In the next proposition we verify the intuitively clear fact that our $f(x)$ is indeed identical with $\gamma$ on the stopping set $\Gamma$.

\begin{proposition}
Let $f(x)=f_1(x)\mathbbm{1}_{(a,z^*]}(x)+f_2(x)\mathbbm{1}_{[y^*,b)}(x)$. Then $f(x)=\gamma(x)$ for $x\in \Gamma$.
\end{proposition}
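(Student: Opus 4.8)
The plan is to prove the identity separately on the two components of $\Gamma=(a,z^\ast]\cup[y^\ast,b)$, and since the two cases are mirror images under the interchange of $\psi\leftrightarrow\varphi$, $\alpha\leftrightarrow\beta$ and $h_1\leftrightarrow h_2$, I would only write out $x\in(a,z^\ast]$, where $f(x)=f_1(x)$. The starting observation is that, by the identification preceding the statement and by Theorem \ref{thm 2suff}, $f_1(x)=F_1^{\beta(x)}(x)=h_1(x,\beta(x))$, i.e.\ $f_1(x)$ is exactly the value of the branch $x<y<b$ of $\gamma$ at the reference point $y=\beta(x)\in[y^\ast,b)$. Feasibility of this reference point gives at once the easy inequality $\gamma(x)\le h_1(x,\beta(x))=f_1(x)$, so the whole content of the proposition is the reverse bound: the minimum defining $\gamma(x)$ is attained at $y=\beta(x)$.

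The crux is to show that $\beta(x)$ minimises the upper branch $y\mapsto h_1(x,y)=F_1^y(x)$ over $(x,b)$. Writing $F_1^y(x)=-\left(\int_x^y(\mathcal{G}_rg)\hat\varphi_y m'\right)/\left(r\int_x^y\hat\varphi_y m'\right)$ and differentiating in the upper limit $y$ --- using $\hat\varphi_y(y)=0$, so that only $\partial_y\hat\varphi_y(t)=\psi'(y)\varphi(t)-\varphi'(y)\psi(t)$ enters --- one obtains, after expanding both $\hat\varphi_y$ and $\partial_y\hat\varphi_y$ in the basis $\{\psi,\varphi\}$ and cancelling the Wronskian $\psi'\varphi-\varphi'\psi=BS'$, the clean proportionality
\begin{align*}
\frac{\partial}{\partial y}F_1^y(x)=\frac{BS'(y)}{Q(y)^2}\left(\int_x^y\varphi m'\right)\left(\int_x^y\psi m'\right)\left[\frac{\int_x^y(\mathcal{G}_rg)\varphi m'}{r\int_x^y\varphi m'}-\frac{\int_x^y(\mathcal{G}_rg)\psi m'}{r\int_x^y\psi m'}\right],
\end{align*}
where $Q(y)=\int_x^y\hat\varphi_y m'>0$. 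The bracket is precisely $\hat H(x,y)$ from the proof of Theorem \ref{thm 2suff}, and the prefactor is strictly positive; hence the stationary points of $F_1^{\cdot}(x)$ are exactly the roots of $\hat H(x,\cdot)=0$, i.e.\ of $H(x,\cdot)=0$, whose unique root in $(y^\ast,b)$ is $\beta(x)$. The sign analysis of $H(x,\cdot)$ carried out in Theorem \ref{thm 2suff} (negative just above $x$, with $\lim_{y\uparrow b}H(x,y)=\infty$) shows $\hat H(x,\cdot)$ changes sign from $-$ to $+$ at $\beta(x)$, so $\beta(x)$ is a genuine global minimum and $\min_{y\in(x,b)}h_1(x,y)=f_1(x)$. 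In particular the $y=b$ endpoint $g(x)-g'(x)\varphi(x)/\varphi'(x)=\lim_{y\uparrow b}F_1^y(x)$ exceeds $f_1(x)$.

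It then remains to dominate the branch $a<y<x$ and the $y=a$ endpoint. Here I would use that for $x\le z^\ast<x_0<\hat x$ the generator $(\mathcal{G}_rg)$ is negative and nondecreasing on $(a,x)$ by Assumption \ref{Assumption}(a); consequently any average of $(\mathcal{G}_rg)$ against a strictly positive weight over a subinterval of $(a,x)$ is bounded above by $(\mathcal{G}_rg)(x)$. Therefore every lower-branch value $F_2^y(x)=-\tfrac1r\langle(\mathcal{G}_rg)\rangle_{(y,x)}$ and the endpoint value $g(x)-g'(x)\psi(x)/\psi'(x)=-\tfrac1r\langle(\mathcal{G}_rg)\rangle_{(a,x)}$ is bounded below by $-\tfrac1r(\mathcal{G}_rg)(x)=\lim_{y\downarrow x}F_1^y(x)$ (the limiting ratio \eqref{limitingratio}), which in turn dominates $f_1(x)$ since the upper branch descends from this limit to its minimum $f_1(x)$ at $\beta(x)$. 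Combining the three comparisons gives $\gamma(x)=h_1(x,\beta(x))=f_1(x)$; the mirror argument on $[y^\ast,b)$ yields $\gamma(x)=f_2(x)$, completing the proof.

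The step I expect to be the main obstacle is the differentiation identity in the second paragraph: establishing that the derivative of the weighted-average functional $F_1^y(x)$ in its \emph{upper integration limit} collapses, via the Wronskian cancellation, to (a positive multiple of) the harmonic-balance functional $\hat H(x,y)$. This is what pins the optimal stopping-signal reference point to the indifference point $\beta(x)$ and hence links the two representations; once it is in place, the unimodality and the domination of the lower branch are routine consequences of the single-hump sign structure of $(\mathcal{G}_rg)$ from Assumption \ref{Assumption} and the estimates already proved for Theorem \ref{thm 2suff}.
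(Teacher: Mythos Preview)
Your proof is essentially correct but takes a genuinely different route from the paper's. You attack the identity directly: show that the minimum in the definition of $\gamma(x)$ is attained precisely at the indifference point $\beta(x)$ (resp.\ $\alpha(x)$), by computing $\partial_y F_1^y(x)$, identifying it with a positive multiple of $\hat H(x,y)$, and then using the single-hump structure of $(\mathcal G_rg)$ from Assumption~\ref{Assumption} to locate the unique sign change. Your differentiation identity is correct, and the domination of the lower branch and the two endpoints via the monotonicity of $(\mathcal G_rg)$ on $(a,x)\subset(a,\hat x)$ is clean. The one point you lean on without making fully explicit is the \emph{uniqueness} of the root of $H(x,\cdot)=0$ on $(x,b)$; the sign information you cite (negative just above $x$, $+\infty$ at $b$) guarantees at least one crossing, not exactly one. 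Uniqueness does hold under Assumption~\ref{Assumption} --- it is implicit in the monotone pairing $z\mapsto y_z$ established in Theorem~\ref{thm 2suff} --- but you should say so.

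The paper's proof, by contrast, avoids the minimisation entirely via a level-set trick. It introduces the auxiliary problems with payoff $g-k$, $k\ge0$, observes from the representation~\eqref{eq f12} that the corresponding functions satisfy $f_1^k=f_1-k$ and $f_2^k=f_2-k$, so $f^k=f-k$, and then invokes Theorem~13 of \cite{BaBa} which gives the stopping set of the $k$-problem as $\Gamma_k=\{\gamma\ge k\}$. Since $\Gamma_k$ is simultaneously $\{f^k\ge0\}=\{f\ge k\}$, the superlevel sets of $f$ and $\gamma$ coincide for every $k$, forcing $f=\gamma$ on $\Gamma$. This argument is shorter and sidesteps both the differentiation identity and the uniqueness question; on the other hand, your approach is self-contained (it does not appeal to the external result from \cite{BaBa}) and makes the mechanism --- that $\beta(x)$ is the minimiser of $y\mapsto h_1(x,y)$ --- transparent.
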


\begin{proof}
Let us redefine $f$ on $(z^*,y^*)$ to be negative. In this way, we can write the stopping set $\Gamma=\{x\in \mathcal{I}: f(x)\geq0\}$.
Consider now the auxiliary parameterized stopping problem
\begin{align}\label{eq aux prob}
\sup_\tau\E_x\left[e^{-r\tau}\left(g(X_\tau)-k\right)\right],
\end{align}
where $k\geq0$ is an arbitrary positive constant and $g$ is as in the initial problem \eqref{eq prob}. We know by Theorem 13 from \cite{BaBa} that for the problem
\eqref{eq aux prob} the stopping set can be written as $\Gamma_k=\{x\in \mathcal{I}: \gamma(x)\geq k\}$. Thus, if we can also show that $\Gamma_k=\{x\in \mathcal{I}: f(x)\geq k\}$,
then we must necessarily have $f(x)=\gamma(x)$ as $k$ is arbitrary. In order to prove the desired result, let $f^k(x)=f^k_1(x)\mathbbm{1}_{(a,z^*]}(x)+f^k_2(x)\mathbbm{1}_{[y^*,b)}(x)$
be the function $f$ for the auxiliary problem \eqref{eq aux prob}. Using  representation \eqref{eq f12} now shows that $f^k_1(x)=f_1(x)-k$ and $f^k_2(x)=f_2(x)-k$. Hence, we have
$f_k(x)=f(x)-k$. Consequently, it follows that
\[\Gamma_k=\{x\in \mathcal{I}: f_k(x)\geq0\}=\{x\in \mathcal{I}: f(x)\geq k\}\]
and the claim follows.
\end{proof}

Unfortunately, neither function $\gamma$ nor $f$ can be expressed explicitly in a general setting despite the fact that they both constitute alternative representations of the same value.
The function $\gamma$ is too
complex due to the minimization operator. Although $f$ is more explicit than $\gamma$, it is nevertheless also too complex for explicit expressions due to the implicit connection
between $f_1$ and $f_2$ through $\alpha(m)$ and $\beta(i)$.  However, as our subsequent examples based on capped straddle options indicate, our approach applies even when the smooth pasting condition
is not met. In this respect the approach developed in our paper can generate the required representation in cases which do not appear
in the approach based on the stopping signal.

\subsection{Examples}
Since the functions $f_1$ and $f_2$ depend on each other, it is very hard to express these functions explicitly. Fortunately, the derived
integral representation is such that the functions can be solved numerically in an efficient way. In what follows we shall illustrate these functions and their intricacies in several
explicitly parameterized examples.

\subsubsection{Example 3: Minimum guaranteed payment option}
Set $\mathcal{I}=(0,\infty)$ and consider the optimal stopping problem
\begin{align}\label{guoshepp}
V^*(x)=\sup_{\tau}\E_x\left[e^{-r\tau}(X_\tau \vee c)\right],
\end{align}
where $c>0$ is an exogenously determined minimum guaranteed payment. As was shown in \cite{AlMatFin}, the assumed boundary behavior of the underlying diffusion process guarantees that
problem \eqref{guoshepp} has a two-sided solution with a value
\begin{align}\label{mathfinrep}
V^\ast(x)=V_{(z^{\ast},y^{\ast})}(x)=\begin{cases}
x &x\geq y^\ast\\
\frac{\hat{\varphi}_{y^\ast}(x)}{\hat{\varphi}_{y^\ast}(z^\ast)}c+\frac{\hat{\psi}_{z^\ast}(x)}{\hat{\psi}_{z^\ast}({y^\ast})}y^\ast &z^\ast<x< y^\ast\\
c &x\leq z^\ast
\end{cases}
\end{align}
where the thresholds $(z^\ast,y^\ast)$ constitutes the unique root of the first order optimality conditions
\begin{align*}
\frac{\psi'(y^\ast)}{S'(y^\ast)}y^\ast-\frac{\psi(y^\ast)}{S'(y^\ast)} &=\frac{\psi'(z^\ast)}{S'(z^\ast)}c\\
\frac{\varphi'(y^\ast)}{S'(y^\ast)}y^\ast-\frac{\varphi(y^\ast)}{S'(y^\ast)} &=\frac{\varphi'(z^\ast)}{S'(z^\ast)}c.
\end{align*}

\noindent{\bf Geometric Brownian motion:} Assume that $X_t$ constitutes a geometric Brownian motion characterized by the stochastic differential equation
\begin{align*}
dX_t=\mu X_tdt+\sigma X_tdW_t,
\end{align*}
where $\sigma>0$ and $\mu<r$. With these choices $\psi(x)=x^{\kappa_{+}}$, $\varphi(x)=x^{\kappa_{-}}$, where
\begin{align*}
\kappa_{\pm}=  \frac{1}{2}-\frac{\mu}{\sigma^2} \pm \sqrt{\left(\frac{1}{2}-\frac{\mu}{\sigma^2}\right)^2+\frac{2r}{\sigma^2}}
\end{align*}
are the solutions of the characteristic equation $\frac{1}{2}\sigma^2\kappa(\kappa-1)+\mu\kappa-r=0$.
Under these assumptions, problem \eqref{guoshepp} admits an explicit solution (cf. \cite{GuoShepp})
\begin{align*}
V^\ast(x)=V_{(z^{\ast},y^{\ast})}(x)=\begin{cases}
x &x\geq y^\ast\\
\left(\kappa_{+}\left(\frac{x}{z^\ast}\right)^{\kappa_-}-\kappa_{-}\left(\frac{x}{z^\ast}\right)^{\kappa_+}\right)\frac{c}{\kappa_{+}-\kappa_{-}} &z^\ast<x< y^\ast\\
 c &x\leq z^\ast
\end{cases}
\end{align*}
where
$$
z^\ast = \left(\frac{\kappa_{+}}{\kappa_{+}-1}\right)^{\frac{\kappa_{+}-1}{\kappa_{+}-\kappa_{-}}}\left(\frac{\kappa_{-}-1}{\kappa_{-}}\right)^{\frac{\kappa_{-}-1}{\kappa_{+}-\kappa_{-}}} c
$$
and
$$
y^\ast = \left(\frac{\kappa_{+}}{\kappa_{+}-1}\right)^{\frac{\kappa_{+}}{\kappa_{+}-\kappa_{-}}}\left(\frac{\kappa_{-}-1}{\kappa_{-}}\right)^{\frac{\kappa_{-}}{\kappa_{+}-\kappa_{-}}} c.
$$

Now the conditions of Theorem \ref{thm 2suff} are valid, so that we know that there exist a $f_1$ and $f_2$ such that $f_1$ is non-increasing and $f_2$ is
non-decreasing, $f_1(z^{\ast})=0=f_2(y^{\ast})$ and that $\E_x\left[\sup_{0\leq t\leq T}f(X_t)\right]=V_{(z^{\ast},y^{\ast})}(x)$ for
$f(x)=f_1(x)\mathbbm{1}_{(a,z]}(x)+f_2(x)\mathbbm{1}_{[y,b)}(x)$. It can be calculated that $\lim_{i\mapsto 0}f_1(i)=c$ and that
$\lim_{m\mapsto\infty}f_2(m)=\infty$, so that in this case $\zeta\neq b=\infty$. Unfortunately, the functions $f_1$ and $f_2$ cannot be expressed in analytically closed form. 

\noindent{\bf Logistic Diffusion:}
Assume that $X_t$ constitutes a logistic diffusion process characterized by the stochastic differential equation
\begin{align*}
dX_t=\mu X_t(1-\gamma X_t)dt+\sigma X_tdW_t,
\end{align*}
where $\sigma>0,\gamma\geq0$ and $\mu>0$. In this case the fundamental solutions read as
\begin{align*}
\psi(x) &= x^{\kappa_+} M(\kappa_+,1+\kappa_+ -\kappa_{-}, 2\mu\gamma x/\sigma^2)\\
\varphi(x) &= x^{\kappa_{-}}
M(\kappa_{-},1-\kappa_+ +\kappa_{-}, 2\mu\gamma x/\sigma^2),
\end{align*}
where $M$ denotes the confluent hypergeometric
function. The functions $f_1$ and $f_2$ are now illustrated numerically in Figure \ref{fighyp1}.
\begin{figure}[!ht]
\begin{center}
\includegraphics[width=0.5\textwidth]{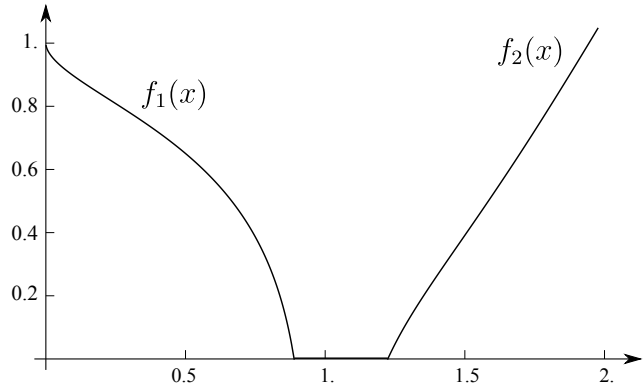}
\caption{\small Illustrating $f_1$, $f_2$ in logistic case. The parameters are $\mu=0.07$, $\sigma=0.1$, $\gamma=0.5$, $r=0.035$, $c=1$. With these choices
$(z^{\ast},y^{\ast},\zeta)=(0.8889,1.2242,1.9444)$.}\label{fighyp1}
\end{center}
\end{figure}

\subsubsection{Example 4: Capped straddle option}

We now assume that the underlying follows a GBM and focus on two straddle option variants. Namely, the symmetrically capped straddle with exercise payoff $g(x)=\min(|X-K|,C)$,
where $K>C>0$, and the asymmetrically capped straddle option with exercise payoff
$$g(x)=\min((K-x)^+,C_1) + \min((x-K)^+,C_2),$$
where $K>C_1>0, C_2>0$. It is worth noticing that the asymmetrically capped straddle is related to minimum guaranteed payoff option treated in the previous example, since
if $C_1<C_2$, then $g(x)\leq \max(C_1,\min((x-K)^+,C_2))$ and if $C_1>C_2$, then $g(x)\leq \max(C_2,\min((K-x)^+,C_1))$. In this way the value of the asymmetrically capped straddle
is dominated by the value of a minimum guaranteed payoff option.

It is now clear that the assumptions of our paper are met. Hence, the optimal exercise policy constitutes a two-boundary
stopping strategy. As in the capped call option case, the smooth fit condition may, however, be violated depending on the precise parametrization of the model. In the present example the functions
$f_1$ and $f_2$ are illustrated in Figure \ref{fig example 2} under diffusion parameter specifications resulting in $\psi(x)=x^2$ and $\varphi(x)=x^{-4}$.
\begin{figure}[!ht]
\begin{center}
\begin{subfigure}[b]{0.45\textwidth}
\begin{center}
\includegraphics[width=\textwidth]{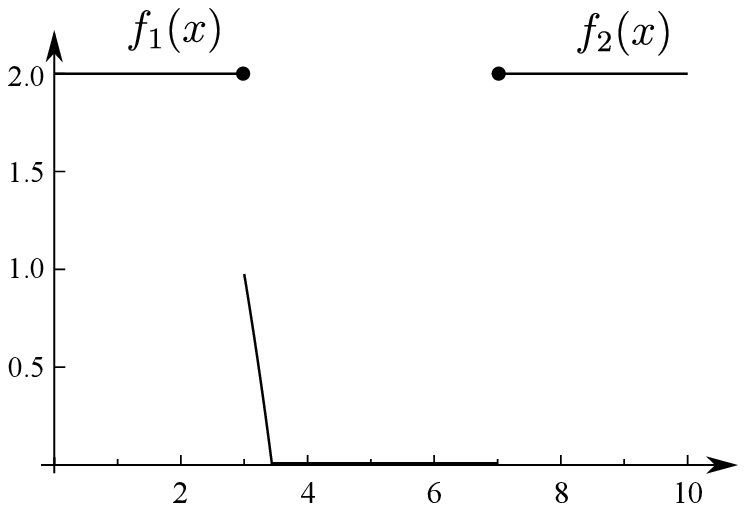}
\end{center}
\caption{Example 4: Capped straddle option with $g(x)=\min\{|x-5|,2\}$. Smooth fit at $z^*\approx 3.33$, corner solution at $y^*=7$. Both $f_1$ and $f_2$ are discontinuous.}\label{fig example 2a}
\end{subfigure}
\qquad
\begin{subfigure}[b]{0.45\textwidth}
\begin{center}
\includegraphics[width=\textwidth]{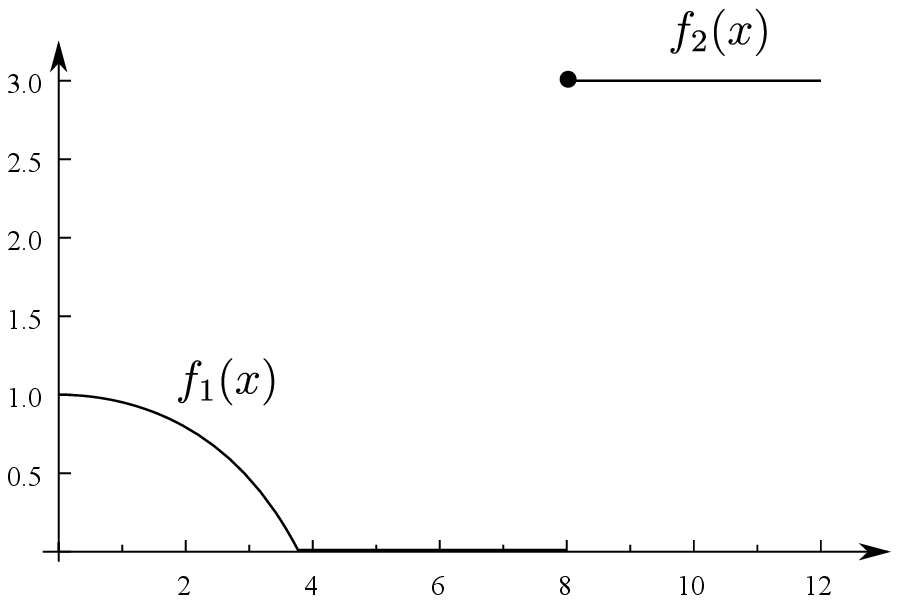}
\end{center}
\caption{Example 5: Asymmetrically capped straddle option with $C_1=1$, $K=5$, $C_2=3$. Smooth fit at $z^*\approx 3.78$ and corner solution at $y^*=9$. Now $f_1$ is continuous.}\label{fig example 2b}
\end{subfigure}
\end{center}
\caption{\small Numerical examples based on geometric Brownian motion.}\label{fig example 2}
\end{figure}
Under these specifications, we observe from Figure 5(A) that the functions $f_1$ and $f_2$ may be discontinuous. In the case of Figure 5(A),
the first discontinuity is based on the fact that the exercise payoff is not differentiable on the entire stopping region. The remaining discontinuity in
Figure 5(A) is based on the fact that the value does not satisfy the smooth fit principle at $y^\ast$. This observation illustrates the pronounced role of the interdependence between $f_1$ and $f_2$
and especially their sensitivity with respect to the potential nonsmoothness of the problem.

In both of these examples, $\zeta=y^*$, which enables us to write down the functions $f_1$ and $f_2$ explicitly. Especially, in the case of Figure \ref{fig example 2}(B), they are
\begin{align*}
\begin{aligned}
f_1(x)&=\frac{\frac{1}{2048}x^6-18x^2+256}{\frac{1}{2048}x^6-6x^2+256},\quad && i\in(0,z^*]\\
f_2(m)&=g(m)\equiv 3,\quad && m\in[y^*,\infty).
\end{aligned}
\end{align*}

\section{Conclusions}

We considered the representation of the value of an optimal stopping problem of a linear diffusion as the expected supremum of a function with known
regularity and monotonicity properties.
We developed an integral representation for the above mentioned function by first computing the joint probability distribution of the running supremum
and infimum of the underlying diffusion and then utilizing this distribution in determining the expected value explicitly in terms of the minimal excessive
mappings and the infinitesimal characteristics of the diffusion.

There are at least two  directions towards which our analysis could be potentially extended. First, given the close connection of optimal stopping with
singular stochastic control it would naturally be of interest to analyze if our representation would function in that setting as well. It is clear that
this should be doable at least in some circumstances, since typically the marginal value of a singular stochastic control problem can be interpreted as
a standard optimal stopping problem (see, for example, \cite{Ba,BaKa,BK,Kar1,KS1,KS2}). Such an extension would be very interesting especially from the
point of view of financial and economic applications, since a large class of control problems focusing on the rational management of a randomly fluctuating
stock can be viewed as singular stochastic control problems.  Second, impulse control and switching problems can in most cases be interpreted as sequential
stopping problems of the underlying process. Thus, extending our representation to that setting would be interesting too (for a recent approach to this
problem, see \cite{ChSa15}). However, given the potential
discreteness of the optimal policy in the impulse control policy setting seems to make the explicit determination of the integral representation a very
challenging problem which at the moment is outside the scope of our study.\\

\noindent {\bf Acknowledgements:} The authors are grateful to {\em Peter Bank} and {\em Paavo Salminen} for suggestions and helpful comments.\\

\bibliographystyle{amsplain}
\bibliography{Maks}

\end{document}